\theoremstyle{remark}
\newtheorem{example}{\textbf{Example}}[section]
\numberwithin{equation}{section}
\newcommand\figcaption{\def\@captype{figure}\caption}
\newcommand\tabcaption{\def\@captype{table}\caption}
\newenvironment{NNalgorithm}[1][htb]{%
    \renewcommand{\ALG@name}{}
   \begin{algorithm}[#1]%
  }{\end{algorithm}}
\def\bq{\begin{equation}}
\def\eq{\end{equation}}
\def\br{\begin{eqnarray}}
\def\er{\end{eqnarray}}
\def\brr{\bq\begin{array}{rlll}}
\def\err{\end{array}\eq}
\def\R{\mathbb{R}}
\def\text#1{\hbox{#1}}
\newtheorem{thm}{Theorem}[section]
\newtheorem{lem}{Lemma}[section]
\newtheorem{coro}{Corollary}[section]
\newtheorem{rem}{Remark}[section]
\newcommand{\bsub}{\begin{subequations}}
\newcommand{\esub}{\end{subequations}$\!$}
\title[]{Positive and free energy satisfying schemes for diffusion with interaction potentials}
\date{\today}
\author{ Hailiang Liu and Wumaier Maimaitiyiming}
\address{Iowa State University, Mathematics Department, Ames, IA 50011}
\email{hliu@iastate.edu}\email{wumaierm@iastate.edu} 
\keywords{Drift-diffusion equations, Implicit-explicit scheme, Energy dissipation, Positivity preserving}
\subjclass{35K20, 35R09, 65M08, 82C31.}
\date{}
\begin{document}

\begin{abstract}
In this paper, we design and analyze second order positive and free energy satisfying schemes for solving diffusion equations with interaction potentials. The semi-discrete scheme is shown to conserve mass, preserve solution positivity, and satisfy a discrete free energy dissipation law for nonuniform meshes.  These properties for the fully-discrete scheme (first order in time) remain preserved without a strict restriction on time steps.
For the fully second order (in both time and space) scheme, we use a local scaling limiter to restore solution positivity when necessary. It is proved that such limiter does not destroy the second order accuracy.  In addition, these schemes are easy to implement, and efficient in simulations over long time. Both one and two dimensional numerical examples are presented to demonstrate the performance  of these schemes.
\end{abstract}

\maketitle

\section{Introduction}

This paper is concerned with efficient numerical approximations to the following problem,
\begin{equation}\label{mainmodel}
\left \{
\begin{array}{lll}
&\partial_t \rho =\nabla \cdot (\nabla\rho+\rho \nabla (V(\mathbf{x})+W*\rho )),  \quad & \mathbf{x}\in \Omega\subset\R^d,  \quad  t>0, \\
& \rho(\mathbf{x},0)=\rho_0(\mathbf{x}), & \mathbf{x}\in \Omega\subset\R^d,
\end{array}
\right.
\end{equation}
subject to zero flux boundary conditions. Here $\Omega$ is a bounded domain in $\mathbb{R}^d$, $\rho=\rho(\mathbf{x},t)$ is the unknown density, $V(\mathbf{x})$ is a confinement potential, and $W(\mathbf{x})$ is an interaction potential, which is assumed to be symmetric.

Such problems appear in many applications. If $W$ vanishes, this model includes heat equation ($V(\mathbf{x})=0$) and the Fokker--Planck equation ($V(\mathbf{x})\neq 0 $, see e.g. \cite{Ri89}). With  interaction potentials, the equation can model nematic phase transition of rigid rod-like polymers \cite{Doi1}, chemotaxis \cite{Pe07}, and aggregation in biology (see \cite{AggW2,AggW1,Topez} and references therein). For chemotaxis, a wide literature exists in relation to the  Patlak-Keller-Segel system \cite{ KS70,Patlak}, and for rod-like polymers,   
the Doi-Onsager equation \cite{ Cons, Doi1, LiuZhang, Ons1949}  is a well studied model. 

Main properties of the solution to (\ref{mainmodel}) are non-negativity, mass conservation and free energy dissipation, i.e.,
\begin{equation}\label{positivity}
\rho_0(\mathbf{x})\geq 0  \Longrightarrow \rho(\mathbf{x},t)\geq 0, \quad  t>0,
\end{equation}

\begin{equation}\label{mass}
\int_{\Omega}\rho(\mathbf{x},t)d\mathbf{x}=\int_{\Omega}\rho_0(\mathbf{x})d\mathbf{x}, \quad  t>0,
\end{equation}

\begin{equation}\label{energydiss1}
\frac{dE(\rho)}{dt}=-\int _{\Omega}\rho|\nabla(\log(\rho)+V(x)+W*\rho )|^2d \mathbf{x}=-I(\rho)\leq 0,
\end{equation}
where the free energy associated to (\ref{mainmodel}) is given by
\begin{equation}\label{energy1}
E(\rho)=\int_{\Omega} \rho\log(\rho) d\mathbf{x}+\int_{\Omega}V(\mathbf{x})\rho d\mathbf{x}+\frac{1}{2}\int_{\Omega}\int_{\Omega}W(\mathbf{x}-\mathbf{y})\rho(\mathbf{y})\rho(\mathbf{x})d\mathbf{y}d\mathbf{x}.
\end{equation}
This energy functional is a sum of internal energy, potential energy, and the interaction energy. The functional $I$ is referred to as the entropy dissipation.
The nice mathematical features (\ref{positivity})-(\ref{energydiss1}) are crucial for the analytical study of (\ref{mainmodel}), while free-energy dissipation inequality (\ref{energydiss1}) is particularly important to understand the large time dynamics of solutions of (\ref{mainmodel})( see e.g., \cite{Energydiss1,Energydiss2, Mc97}). 
There have been many studies  about the connection between the free energy, the Fokker-Planck equation, and optimal transportation
in a continuous state space (see e.g., \cite{Br91, GM95, JO98, Ot01, Vi03}).

One way of obtaining a structure-preserving numerical scheme is the minimizing movement approximation (see \cite{Savar05} and the references therein), also named Jordan-Kinderlehrer-Otto (JKO) scheme (Jordan et al. \cite{JO98}), which is given by
$$
\rho^{n+1}=\text{argmin} \left\{\frac{1}{2\tau}W^2(\rho^n,\rho)+E(\rho)\right\}
$$
Here, at each time step, the distance of the solution update acts as a regularization to the free energy. Yet such problems involving the Wasserstein distance $W(\rho^n,\rho)$ are computationally demanding,  see, e.g., \cite{BCL16, CP18, LLW19, CCWW19} for some recent advances. 

The second way of obtaining a structure-preserving numerical scheme is by a direct discretization of (\ref{mainmodel}) so that these solution properties are preserved at the discrete level. This way has gained increasing attention in recent years, some closely related works include \cite{FiniteV, LY12, LY14, LY15, LW14, LW17, DG}.  In \cite{LY12},  second order implicit numerical schemes designed for linear (yet singular) Fokker-Planck equations satisfy all three solution properties without any time step restriction. In \cite{LW14}, the authors extended the idea in \cite{LY12} to a system of Poisson-Nernst-Planck equations using the explicit time discretization. 
For a more general class of  nonlinear nonlocal equations,
\begin{equation}\label{e2}
\partial_t \rho =\nabla \cdot \left(\rho \nabla (H'(\rho)+ V(\mathbf{x})+W*\rho )\right), 
\end{equation}
where $H$ is a smooth convex function, a second order finite-volume method was constructed in \cite{FiniteV}, where  positivity is enforced by using piecewise linear polynomials interpolating interface values.  Structure preserving schemes based on the Chang-Cooper scheme \cite{Cooper70} have been  constructed in \cite{PZ18} to numerically solve nonlinear Fokker-Planck equations. Note that in \cite{LW14, FiniteV, PZ18} different time step  restrictions are imposed in order to preserve the desired solution properties.  

The construction of higher order schemes using the discontinuous Galerkin (DG) framework has recently been carried out for Fokker-Planck-type equations. We refer to \cite{LY15} for entropy satisfying DG  schemes of arbitrary high order, and  to \cite{LY14} for a DG scheme of third order to satisfy the discrete maximum principle for linear Fokker-Planck equations.  In \cite{LW17}, the authors  designed free energy satisfying DG schemes of any high order for Poisson-Nernst-Planck equations, but positive cell averages are shown to propagate in time only  for special cases. While in \cite{DG}, a high order nodal DG method for  (\ref{e2}) was constructed using $k+1$ Gauss--Lobatto quadrature points for degree $k$ polynomials in order to preserve both the entropy dissipation and the solution positivity; somehow degeneracy of accuracy in some cases was reported. Despite some well-known advantages of the DG method,  structural properties of the above fully discrete DG schemes are verified under some CFL conditions. It would be interesting to explore some explicit-implicit strategies for DG schemes. 


In this paper we extend the idea in \cite{LY12} to construct explicit-implicit schemes which are proven to preserve three main properties of (\ref{mainmodel}) without a strict restriction on time steps.  This therefore has improved upon the work \cite{LW14}.  Our main results include the scheme formulation, proofs of mass conservation, solution non-negativity,  and the discrete free-energy dissipation law for both semi-discrete and fully discrete methods. In particular, the fully-discrete scheme (first order in time) is shown to satisfy three desired properties without strict restriction on time steps, in both one and two dimensional cases with nonuniform meshes.  For the fully second order (in both time and space) scheme, we apply a local scaling limiter to restore solution positivity, such limiter was first introduced in \cite{LMJCAM}, in this paper we rigorously  prove that such limiter does not destroy the second order accuracy. 

More precisely, our scheme construction is based on a reformulation 
\begin{equation}\label{mainmodel2}
\partial_t \rho=\nabla\cdot \left(M\nabla  \left( \frac{\rho}{M}\right)\right),
\end{equation}
where $M=e^{-V(\mathbf{x})-W*\rho}$,  motivated by the fact that the equilibrium solutions of (\ref{mainmodel}) may be expressed as $\rho=C e^{-V(\mathbf{x})-W*\rho}$. For linear Fokker-Planck equations,  such reformulation with $M=e^{-V(\mathbf{x})}$ (so called non-logarithmic Landau form) has been used in \cite{LY12},  as well as in earlier works ( see e.g., \cite{LAN}). We note that for the general nonlinear nonlocal model (\ref{e2}), our scheme construction remains valid if we take $M=\rho e^{-H'(\rho)-V(\mathbf{x})-W*\rho}$ in the reformulation (\ref{mainmodel2}).

The advantage of formulation (\ref{mainmodel2}) can be seen from both spatial and temporal discretization. The symmetric spatial discretization of the one-dimensional version of (\ref{mainmodel2}) 
yields the semi-discrete scheme 
\begin{equation}\label{semi01}
h_j \frac{d}{dt}{\rho} _j= h^{-1}_{j+1/2} M_{j+1/2}\left( \frac{\rho_{j+1}}{M_{j+1}} -  \frac{\rho_j}{M_j} \right)- h^{-1}_{j-1/2} M_{j-1/2}\left( \frac{\rho_{j}}{M_{j}} -  \frac{\rho_{j-1}}{M_{j-1}} \right), 
 \end{equation}
 in which the evaluation of $M$ at cell interfaces $\{x_{j+1/2}\}$ and cell centers $\{x_j\}$ is easily available as defined in 
 (\ref{MGsemi}). 
 Here $\rho_j$ approximates the cell average of $\rho(x,t)$ on $j$-th computational cell $[x_{j-1/2}, x_{j+1/2}]$ of size $h_j$, and  $h_{j+1/2}=(h_j+h_{j+1})/{2}$.

For time discretization of (\ref{semi01}), 
we adopt an implicit-explicit approach to obtain 
\begin{equation}\label{fully01}
h_j \frac{\rho_j^{n+1} -\rho_j^n}{\tau}= h^{-1}_{j+1/2} M^n_{j+1/2}\left( \frac{\rho^{n+1}_{j+1}}{M_{j+1}^n} -  \frac{\rho^{n+1}_j}{M_j^n} \right) - h^{-1}_{j-1/2}M^n_{j-1/2}\left( \frac{\rho^{n+1}_{j}}{M_{j}^n} -  \frac{\rho^{n+1}_{j-1}}{M_{j-1}^n}\right),
\end{equation}
where $\rho_j^n$ approximates ${\rho}_j(t)$ at time $t=n\tau$, see (\ref{fully}). This scheme is easy to implement, and is shown to preserve all three desired properties without a strict time step restriction.  However, the scheme (\ref{fully01}) is only first order in time. We further 
propose a fully second order scheme: 
\begin{equation}\label{CN}
\begin{aligned}
& h_j\frac{\rho^*_j-\rho^n_j}{\tau/2}= h^{-1}_{j+1/2} M^*_{j+1/2}\left( \frac{\rho^{*}_{j+1}}{M_{j+1}^*} -  \frac{\rho^{*}_j}{M_j^*} \right) - h^{-1}_{j-1/2}M^*_{j-1/2}\left( \frac{\rho^{*}_{j}}{M_{j}^*} -  \frac{\rho^{*}_{j-1}}{M_{j-1}^*}\right),\\
& \rho^{n+1}_j=2\rho^*_j-\rho^n_j,
\end{aligned}
\end{equation}
based on the predictor-corrector methodology, where $M^*_{j}$  and  $M^*_{j+1/2}$ are given in (\ref{PC1}).
This scheme is second order in both time and space,  and it preserves solution positivity for small time steps. For large time steps, we use a local scaling limiter to restore the solution positivity. 
 
Although we derive the schemes for the model equation (\ref{mainmodel}), the methods can be easily applied to a larger class of problems where the solution depends on additional parameters and the PDE is of drift-diffusion type; see \cite{LM19}. 
  
The rest of the paper is organized as follows. In section 2, we present a semi-discrete scheme for one dimensional problems. Theoretical analysis of three properties is provided. In section 3, we present fully discrete implicit-explicit schemes for one dimensional case and prove the desired properties. Section 4 is devoted to numerical
schemes for two dimensional problems. In section 5, we extend the scheme to a fully  second order (in both time and space) scheme, a mass conserving local limiter is also introduced to restore solution positivity. 
Numerical examples for one and two dimensional problems are presented in section 6.  Finally,  concluding remarks are given in section 7.


\section{Numerical Method: one dimensional case}
We begin with
\begin{equation}\label{mainmodel11}
\left \{
\begin{array}{ll}
\partial_t \rho =\partial_x (\partial_x \rho +\rho \partial_x (V(x)+W*\rho )),  \quad & x\in \Omega,  \quad  t>0, \\
  \rho(x,0)=\rho_0(x),  \quad & x\in \Omega, \\
  \partial_x \rho+ \rho \partial_x (V(x)+W*\rho )=0,  \quad & x\in \partial \Omega,  \quad  t>0. \\
\end{array}
\right.
\end{equation}
and  reformulate (\ref{mainmodel11}) as
\begin{equation}\label{mainmodel12}
\left \{
\begin{array}{ll}
\partial_t \rho =\partial_x (M \partial_x (\rho/ M)),  \quad & x\in \Omega,  \quad  t>0, \\
    \rho(x,0)=\rho_0(x),  \quad & x\in \Omega, \\
M \partial_x (\rho/M)=0,  \quad & x\in \partial \Omega,  \quad  t>0,  \\
\end{array}
\right.
\end{equation}
where $M=e^{-V(x)-W*\rho}$. 
We propose a finite volume scheme for (\ref{mainmodel12}) over the interval $\Omega=[a, \ b]$.
For a given positive integer $N$, we partition domain $\Omega$ into computational cells $I_j=[x_{j-\frac{1}{2}}, \ \ x_{j+\frac{1}{2}}]$ with mesh size $h_j=|I_j|$ and cell center at $x_j=x_{j-\frac{1}{2}}+\frac{1}{2}h_j$, $j\in \{1,2,\cdots, N\},$ we set  $h_{j+1/2}=(h_j+h_{j+1})/{2}$.

\subsection{Semi-discrete scheme}
We integrate on each computational cell $I_j$ to obtain
$$
\frac{d}{dt} \int_{I_j}\rho(x,t) dx=M\partial_x (\rho/M)|_{ x_{j+1/2}}- M\partial_x (\rho/M)|_{ x_{j-1/2}}.
$$
Let $\rho(t)=\{\rho_1,\ \cdots,\rho_N\}$ be the numerical solution approximating all cell averages and $C_{j+1/2}$ be an approximation to $M\partial_x (\rho/M)|_{ x_{j+1/2}}$, then one has the following semi-discrete scheme,
\begin{equation}\label{semi}
\begin{aligned}
\frac{d}{dt}\rho _j&=\frac{C_{j+1/2}  - C_{j-1/2}}{ h_j },  \ \ j=1,2,\cdots, N,
 \end{aligned}
\end{equation}
 we define 
\begin{align*}
& C_{j+1/2}= \frac{M_{j+\frac{1}{2}}}{h_{j+1/2}}\left( \frac{\rho_{j+1}}{M_{j+1}}-\frac{\rho_j}{M_j}\right) \ \ \ \text{for } \ j=1,2,\cdots, N-1,\\ \label{ZFsemi}
&C_{1/2}=0,\ \ C_{N+1/2}=0.
\end{align*}
 Here $M_{j+1/2}=Q_1(x_{j+1/2}, \rho)$ and $M_j=Q_1(x_j,\rho)$ with
\begin{equation}\label{MGsemi}
Q_1(x,v)=e^{-V(x)-\sum_{i=1}^Nh_iW(x_i-x)v_{i}},\quad \text{ for } x\in \R, \ v\in \R^{N}.
\end{equation}
Note that the zero flux boundary conditions have been weakly enforced. 

\subsection{Scheme properties} We investigate three desired properties for this semi-discrete scheme. For the energy dissipation property, we 
define a semi-discrete version of the free energy (\ref{energy1}) as
\begin{equation}\label{semienergy}
E_{h}(t)=\sum_{j=1}^Nh_j\left(\rho_j\log(\rho_j)+V_j\rho_j+\frac{1}{2}g_j \rho_j \right),
\end{equation}
where $g_j=\sum_{i=1}^Nh_iW(x_i-x_j)\rho_i$ is a second order approximation of the convolution $(W*\rho)(x_j).$

The following theorem states that the semi-discrete scheme (\ref{semi}) is conservative, positive, and energy dissipating.
\begin{thm} \label{th2.1}The semi-discrete scheme (\ref{semi}) satisfies the following properties:\\
(1) Conservation of mass: for any $ t>0$ we have
\begin{equation}\label{mass21}
\sum_{j=1}^{N} h_j\rho_j(t)=\sum_{j=1}^{N} h_j \rho_j(0).
\end{equation}
(2) Positivity preserving:  if $\rho_j(0)\geq 0$ for all $j\in \{1,\cdots,N\}$, then $\rho_j(t)\geq 0$ for any $t>0$. \\
(3) Entropy dissipation: $\frac{dE_{h}(t)}{dt}\leq -I_{h}$, where 
\begin{equation}\label{dissipation}
I_{h}=\sum_{j=1}^{N-1}C_{j+1/2}\left (\log{(\frac{\rho_{j+1}}{M_{j+1}})}-\log{(\frac{\rho_j}{M_j})} \right)\geq 0.
\end{equation}
\end{thm}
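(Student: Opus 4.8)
The plan is to treat the three properties in the order stated, since the entropy law builds on the first two. For mass conservation I would multiply the scheme (\ref{semi}) by $h_j$ and sum over $j$, giving $\frac{d}{dt}\sum_{j}h_j\rho_j=\sum_{j=1}^N(C_{j+1/2}-C_{j-1/2})$. This is telescoping and collapses to $C_{N+1/2}-C_{1/2}=0$ by the prescribed zero-flux values, so the total mass is constant. This step is immediate.

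For positivity the decisive fact is that $M>0$ everywhere, being an exponential, so the interface ratios $M_{j\pm1/2}/M_{j\pm1}$ are strictly positive for \emph{any} $\rho$. Writing (\ref{semi}) as $\dot\rho_j=\sum_k a_{jk}(\rho)\rho_k$, the off-diagonal coefficients $a_{j,j\pm1}=\frac{1}{h_j}\frac{M_{j\pm1/2}}{h_{j\pm1/2}M_{j\pm1}}$ are nonnegative while $a_{jj}\le0$; that is, the system is quasi-positive (essentially nonnegative). Concretely I would verify the tangency condition: at any state with $\rho_k=0$ and all other $\rho_i\ge0$, the term $\rho_k/M_k$ vanishes, so $\dot\rho_k=\frac{1}{h_k}\big(\frac{M_{k+1/2}}{h_{k+1/2}}\frac{\rho_{k+1}}{M_{k+1}}+\frac{M_{k-1/2}}{h_{k-1/2}}\frac{\rho_{k-1}}{M_{k-1}}\big)\ge0$, i.e. the vector field points into the nonnegative orthant along its boundary. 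Since $Q_1$ is smooth and positive the ODE is well-posed, and the standard invariance theorem for quasi-positive systems yields $\rho_j(t)\ge0$ for all $t>0$.

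The entropy dissipation is the crux. I would differentiate (\ref{semienergy}) and reorganize everything into a single discrete chemical potential. The internal and potential terms give $\sum_j h_j(\log\rho_j+1+V_j)\dot\rho_j$, while the interaction term contributes $\frac12\sum_j h_j(\dot g_j\rho_j+g_j\dot\rho_j)$. Here the symmetry of $W$ is essential: relabeling $i\leftrightarrow j$ and using $W(x_i-x_j)=W(x_j-x_i)$ shows $\sum_j h_j\dot g_j\rho_j=\sum_j h_j g_j\dot\rho_j$, so the interaction term collapses to $\sum_j h_j g_j\dot\rho_j$. The constant ``$+1$'' drops out by mass conservation, since $\sum_j h_j\dot\rho_j=0$. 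Recognizing from (\ref{MGsemi}) that $\log M_j=-V_j-g_j$, the bracket reduces to exactly $\log(\rho_j/M_j)$, so that $\frac{dE_h}{dt}=\sum_j\log(\rho_j/M_j)(C_{j+1/2}-C_{j-1/2})$ after substituting $h_j\dot\rho_j=C_{j+1/2}-C_{j-1/2}$.

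The final step is a discrete summation by parts. Setting $\phi_j=\log(\rho_j/M_j)$ and shifting the index, the boundary contributions vanish because $C_{1/2}=C_{N+1/2}=0$, leaving $\frac{dE_h}{dt}=-\sum_{j=1}^{N-1}C_{j+1/2}(\phi_{j+1}-\phi_j)=-I_h$; note this is in fact an equality, which yields the claimed inequality once $I_h\ge0$ is established. To see the latter I would insert $C_{j+1/2}=\frac{M_{j+1/2}}{h_{j+1/2}}(\rho_{j+1}/M_{j+1}-\rho_j/M_j)$, so each summand of $I_h$ takes the form $\frac{M_{j+1/2}}{h_{j+1/2}}(a-b)(\log a-\log b)$ with $a=\rho_{j+1}/M_{j+1}$ and $b=\rho_j/M_j$, which is nonnegative because $\log$ is increasing and $M_{j+1/2}>0$. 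I expect the main obstacle to be the bookkeeping in the entropy computation, specifically the correct symmetrization of the nonlocal term through the symmetry of $W$ and the clean telescoping of the discrete potential against the zero-flux fluxes; I would also note that the appearance of $\log\rho_j$ tacitly requires strict positivity $\rho_j>0$, which follows from the propagation argument of part~(2) rather than from mere nonnegativity.
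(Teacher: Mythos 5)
Your proposal is correct and follows essentially the same route as the paper's proof: telescoping with the zero fluxes for mass conservation, and for the entropy law the same differentiation of $E_h$, symmetrization of the interaction term via $W(x)=W(-x)$, the identification $\log M_j=-V_j-g_j$, cancellation of the ``$+1$'' by mass conservation, summation by parts, and the elementary inequality $(a-b)(\log a-\log b)\ge 0$. The only divergence is in part (2): you invoke the standard quasi-positivity (tangency) criterion for the nonnegative orthant, whereas the paper packages the same computation as an invariant-region argument on the mass hyperplane, checking $\vec F\cdot\vec v<0$ against outward normals on $\partial\Sigma_1$; the decisive observation --- that at $\rho_k=0$ the diagonal term vanishes and the neighboring contributions $\frac{M_{k\pm 1/2}}{h_{k\pm1/2}}\frac{\rho_{k\pm1}}{M_{k\pm1}}$ are nonnegative --- is identical in both, so this is a repackaging rather than a genuinely different proof, and if anything yours is the cleaner statement since it does not need the strict inequality. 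Your closing caveat that the appearance of $\log\rho_j$ in $E_h$ tacitly requires $\rho_j>0$ is a fair point that the paper passes over in silence.
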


\begin{proof}
(1) Summing all equations in (\ref{semi}), we have
$$\frac{d}{dt}\sum_{j=1}^Nh_j \rho_j(t)=\sum_{j=1}^N\frac{d}{dt} h_j \rho_j(t)=0,$$
therefore (\ref{mass21}) holds true for any $ t>0.$

(2) Let $\vec{F}(\vec{\rho})$ be the vector field defined by the right hand side of (\ref{semi}), then
\begin{equation}\label{system}
\frac{d}{dt}\vec{\rho}=\vec{F}(\vec{\rho}).
\end{equation}
Note that the hyperplane
$\Sigma=\{\vec{\rho}: \sum_{j=1}^{N}h_j\rho_j=\sum_{j=1}^{N}h_j\rho_j(0) \}$
is an invariant region of (\ref{system}). We define a closed set $\Sigma_1$ on this hyperplane by
$$\Sigma_1=\bigg \{\vec{\rho}: \rho_j\geq 0, j=1,2,\cdots,N, \text{and} \ \sum_{j=1}^{N}h_j\rho_j=\sum_{j=1}^{N}h_j\rho_j(0) \bigg\}.$$
It suffices to show that $\Sigma_1$ is invariant under system (\ref{system}). This is the case if the vector field $\vec{F}(\vec{\rho})$ strictly points to interior of $\Sigma_1$ on its boundary $\partial \Sigma_1$: i.e.,
$$\vec{F}(\vec{\rho})\cdot \vec{v}<0,$$
where $\vec{v}$ is outward normal vector on any part of $\partial \Sigma_1$.

A direct calculation using (\ref{semi}) gives
\begin{equation}\label{vecfield}
\begin{aligned}
\vec{F}(\vec{\rho})\cdot \vec{v}&=\sum_{j=1}^{N-1}\frac{v_j}{h_{j}}C_{j+1/2}-\sum_{j=2}^{N}\frac{v_j}{h_{j}}C_{j-1/2}\\
&=- \sum_{j=1}^{N-1}(\frac{v_{j+1}}{h_{j+1}}-\frac{v_j}{h_j})C_{j+1/2}.
\end{aligned}
\end{equation}
For each $\vec{\mu}\in \partial\Sigma_1$, we define the set
$S=\{j: 1\leq j\leq N \text{ and } \  {\mu}_j=0\},$
then the outward normal vector at $\vec{\mu}$ has the form
$$\vec{v}=(v_1,v_2,\cdots,v_N)^{T} \ \ \text{with }
v_i=\left \{
\begin{array}{ll}
-\alpha_i,  \quad & i\in S, \\
  0,  \quad & i \notin S,
\end{array}
\right.
$$
and $\alpha_i>0$ if $i\in S.$

Note that if $j,\ j+1\in S,$ then $\rho_{j}=\rho_{j+1}=0$ implies $C_{j+1/2}=0$; if $j,\ j+1\notin S$, then $v_{j+1}=v_j=0$. Therefore nonzero terms in (\ref{vecfield}) are those with $j\in S, \ j+1\notin S$ or $j\notin S, \ j+1\in S.$ Hence
\begin{align*}
\vec{F}(\vec{\rho})\cdot \vec{v}&=-\sum_{j\in S,  j+1\notin S}\frac{\alpha_j}{h_j}\frac{M_{j+\frac{1}{2}}}{h_{j+1/2}}\frac{\rho_{j+1}}{M_{j+1}}- \sum_{j\notin S,  j+1\in S}\frac{\alpha_{j+1}}{h_{j+1}}\frac{M_{j+\frac{1}{2}}}{h_{j+1/2}}\frac{\rho_j}{M_j}<0.
\end{align*}
Therefore $\Sigma_1$ is an invariant region of (\ref{semi}), this completes the proof of (2).

(3) From the fact that $W(x)=W(-x)$, it follows 
\begin{equation}\label{convolutionderivative}
\frac{d}{dt}\sum_{j=1}^N \frac{h_j}{2}g_j \rho_j =\sum_{j=1}^N h_jg_j \frac{d\rho_j}{dt}. 
\end{equation}
Differentiating the discrete free energy (\ref{semienergy}) with respect to time and using (\ref{convolutionderivative}) we obtain
\begin{align*}
\frac{dE_{h}(t)}{dt}&=\sum_{j=1}^N(\log(\rho_j)+1+V_j+g_j )h_j\frac{d\rho_j}{dt} \\
&= \sum_{j=1}^N(
\log{(\frac{\rho_j}{M_j})}  +1)(C_{j+1/2}- C_{j-1/2})\\
&=-\sum_{j=1}^{N-1}C_{j+1/2}\left(\log{(\frac{\rho_{j+1}}{M_{j+1}})}-\log{(\frac{\rho_j}{M_j})} \right)\\
&=-I_{h}\leq 0.
\end{align*}
Note that
\begin{align*}
I_h=& \sum_{j=1}^{N-1}C_{j+1/2}\left (\log{(\frac{\rho_{j+1}}{M_{j+1}})}-\log{(\frac{\rho_j}{M_j})} \right)\\
       =& \sum_{j=1}^{N-1}\frac{1}{h_{j+1/2}}M_{j+1/2}\left (\frac{\rho_{j+1}}{M_{j+1}}-\frac{\rho_j}{M_j}\right) \left(\log{(\frac{\rho_{j+1}}{M_{j+1}})}-\log{(\frac{\rho_j}{M_j})} \right)
\end{align*}
and
 $(x-y)(\log{x}-\log{y})\geq 0 \;  \text{ for }  x,y\in \R^+,$  so we have $I_{h}\geq 0.$
\end{proof}


\section{Fully discrete scheme} For time discretization of (\ref{semi}), we use an implicit-explicit time discretization in order to construct an easy to implement yet stable numerical scheme without time step restriction.

\subsection{Scheme formulation and algorithm}
Let $\tau$ be time step and $\rho_j^n$ be the numerical solution at $t_n=n\tau$ to  approximate $\rho_j(t_n).$  From given $\rho_j^n$, $j=1,2,\cdots, N$, we update to get $\rho_j^{n+1}$ by

\begin{equation}\label{fully}
\begin{aligned}
\frac{\rho^{n+1}-\rho^n}{\tau}&=\frac{C^{n,*}_{j+1/2}  - C^{n,*}_{j-1/2}}{ h_j },  \ \ j=1,2,\cdots, N.
\end{aligned}
\end{equation}
with 
\begin{align*}
& C^{n,*}_{j+1/2}= \frac{M^n_{j+\frac{1}{2}}}{h_{j+1/2}}\left ( \frac{\rho^{n+1}_{j+1}}{M^n_{j+1}}-\frac{\rho^{n+1}_j}{M^n_j} \right) \ \ \ \text{for } \ j=1,2,\cdots, N-1,\\ \label{ZFsemi}
&C^{n,*}_{1/2}=  C^{n,*}_{N+1/2}=0,
\end{align*}
where $M^n_{j+1/2}=Q_{1}(x_{j+1/2}, \rho^n)$ and $M_j^{n}=Q_1(x_j, \rho^n)$.
The initial data is chosen by 
\begin{equation}\label{ini1}
\rho_{j}^0=\frac{1}{h_j}\int_{I_j}\rho_0(x)dx, \ \ j=1,2,\cdots, N.
\end{equation}

\subsection{Scheme properties} 
Define a fully discrete version $E^n_{h}$ of the free energy (\ref{energy1})  as
\begin{equation}\label{energyfully}
E_{h}^n=\sum_{j=1}^Nh_j \left(\rho^{n}_j\log (\rho^{n}_j)+V_j\rho^{n}_j+\frac{1}{2}g^{n}_j \rho^{n}_j \right),
\end{equation}
where $g_j^n=\sum_{j=1}^Nh_iW(x_i-x_j)\rho_i^n.$  

The following theorem states that the three desired properties are preserved by the scheme (\ref{fully}) without strict time step restriction.


\begin{thm} \label{thm3.1} The fully discrete scheme (\ref{fully}) has the following properties:

(1) Conservation of mass:
\begin{equation}\label{mass2}
\sum_{j=1}^Nh_j\rho_j^{n}= \int_{\Omega} \rho_0(x)dx \ \ \text{ for } n\geq 1.
\end{equation}

(2) Positivity preserving: if $\rho_j^n\geq0$ for all $j=1,\cdots, N,$ then
$$\rho_j^{n+1}\geq 0, \quad j=1,\cdots,N.$$

(3) Entropy dissipation: there exists $\tau^*>0$ such that if $\tau\in(0,\  \tau^*)$, then 
\begin{equation*}\label{FE}
E_{h}^{n+1}-E_{h}^n\leq -\frac{\tau}{2} I^{n}_{h},
\end{equation*}
where
\begin{equation*}\label{dissipationfully}
I^n_{h}=\sum_{j=1}^{N-1}C^{n,*}_{j+1/2}\left (\log{(\frac{\rho^{n+1}_{j+1}}{M^n_{j+1}})}-\log{(\frac{\rho^{n+1}_j}{M^n_j})} \right)\geq 0.
\end{equation*}
\end{thm}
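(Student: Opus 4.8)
The plan is to prove the three properties in the order stated, leveraging the structure of the implicit-explicit scheme (\ref{fully}) in which the mobility coefficients $M^n_{j\pm1/2}$ are frozen at time level $n$ while the fluxes are taken implicitly at level $n+1$. For mass conservation, I would simply multiply (\ref{fully}) by $h_j$ and sum over $j$; the flux differences telescope and the zero-flux boundary conditions $C^{n,*}_{1/2}=C^{n,*}_{N+1/2}=0$ annihilate the boundary terms, giving $\sum_j h_j\rho_j^{n+1}=\sum_j h_j\rho_j^n$, which by induction and the choice of initial data (\ref{ini1}) equals $\int_\Omega \rho_0\,dx$. This step is routine.

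For positivity, the key observation is that (\ref{fully}) is a linear implicit system in the unknowns $\rho_j^{n+1}$, since all the $M$-values are evaluated at the known level $n$. Rearranging, I would write the scheme as $A\vec{\rho}^{\,n+1}=\vec{\rho}^{\,n}$, where the matrix $A=I-\tau B$ and $B$ encodes the discrete diffusion operator with the frozen mobilities. The plan is to show that $A$ is an $M$-matrix: its diagonal entries are positive, its off-diagonal entries are nonpositive (because each off-diagonal contribution carries a factor $M^n_{j+1/2}/(M^n_{j+1}h_{j+1/2}h_j)>0$ with a minus sign), and $A$ is strictly (or irreducibly) diagonally dominant because the diagonal collects the sum of the adjacent off-diagonal magnitudes plus the positive term $1$ from the time derivative. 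Since the inverse of such an $M$-matrix is entrywise nonnegative, $\vec{\rho}^{\,n}\geq 0$ forces $\vec{\rho}^{\,n+1}\geq 0$. I would note this holds for \emph{any} $\tau>0$, which is precisely the claimed absence of a strict time-step restriction. An alternative, perhaps cleaner, route is to argue by contradiction at the minimal component: if $\rho_k^{n+1}=\min_j \rho_j^{n+1}<0$, then inspecting the $k$-th equation shows the implicit diffusion terms and the positive $\rho_k^n$ force a sign contradiction.

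For the entropy dissipation, I would mimic the computation in the proof of Theorem \ref{th2.1}(3), but now at the discrete-in-time level, where the difficulty is that convexity of $x\log x$ gives only a one-sided estimate. The natural starting point is to test the scheme against $\log(\rho_j^{n+1}/M_j^n)+1$ and use the summation-by-parts identity, together with the symmetry of $W$ that produced (\ref{convolutionderivative}), to extract the dissipation term $I^n_h$ exactly as in the semi-discrete case; the nonnegativity of $I^n_h$ follows from $(x-y)(\log x-\log y)\geq 0$ just as before. The gap between $E_h^{n+1}-E_h^n$ and the tested quantity must then be controlled: here one expands $\rho^{n+1}\log\rho^{n+1}-\rho^n\log\rho^n$ and the interaction term, and the mismatch produces second-order remainder terms (Taylor expansions of the logarithm and of $M$) that are $O(\tau^2)$ relative to the leading dissipation. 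This is where the restriction $\tau\in(0,\tau^*)$ enters: one needs $\tau$ small enough that these remainders are absorbed into half of $I^n_h$, yielding $E_h^{n+1}-E_h^n\leq -\frac{\tau}{2}I^n_h$.

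The main obstacle I anticipate is the last step, the energy estimate. Unlike mass and positivity, which hold unconditionally, the dissipation bound requires quantitative control of the discrepancy between the backward-difference of the free energy and the naturally tested entropy functional. The convexity inequality $E_h^{n+1}-E_h^n\geq \sum_j h_j(\log(\rho_j^{n+1}/M_j^n)+1)(\rho_j^{n+1}-\rho_j^n)$ goes the wrong way for a clean telescoping, so one must instead carefully track the error terms arising from freezing $M$ at level $n$ and from the nonlinearity of $\rho\log\rho$, bounding them using the regularity and the uniform positive lower bound on $M$ (guaranteed by boundedness of $V$ and $W$ on the bounded domain) together with the already-established positivity of $\rho^{n+1}$. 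Pinning down the explicit threshold $\tau^*$ in terms of the mesh, the bounds on $V,W$, and the conserved mass is the technically delicate part.
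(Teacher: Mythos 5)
Your parts (1) and (2) are essentially the paper's argument, but with one technical slip in (2): the linear system in the variables $\rho_j^{n+1}$ is \emph{not} row diagonally dominant in general, because the off-diagonal entries carry denominators $M^n_{j\pm1}$ while the diagonal carries $M^n_j$, and these differ. The paper sidesteps this by changing to the unknowns $G^{n,*}_j=\rho_j^{n+1}/M_j^n$, for which the system (\ref{fullyequal}) is manifestly strictly diagonally dominant with positive diagonal and nonpositive off-diagonals; the minimum-principle argument is then run on $G^{n,*}$, not on $\rho^{n+1}$ (your ``contradiction at the minimal component of $\rho^{n+1}$'' would fail for the same reason row dominance fails). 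Your M-matrix conclusion can be rescued via column dominance or by the diagonal similarity $D=\mathrm{diag}(M_j^n)$, but as written the justification is wrong.

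The genuine gap is in part (3). First, you have the convexity inequality backwards: since $x\log x$ is convex and the scheme is implicit, $\rho^{n+1}\log\rho^{n+1}-\rho^{n}\log\rho^{n}\leq(\log\rho^{n+1}+1)(\rho^{n+1}-\rho^{n})$, which is exactly the \emph{favorable} direction; the entropy part contributes no remainder needing $\tau$ small, and this is the whole point of evaluating the test function at level $n+1$. The only term that forces a time-step restriction is the interaction remainder $\frac12\sum_j h_j(g_j^{n+1}-g_j^n)(\rho_j^{n+1}-\rho_j^n)$, which by symmetry of $W$ and Cauchy--Schwarz is bounded by $\frac{\|W\|_\infty(b-a)\tau^2}{2}|\vec{\xi}|^2$ with $\vec{\xi}_j=\sqrt{h_j}(\rho_j^{n+1}-\rho_j^n)/\tau$. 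Second, and more seriously, you give no mechanism for absorbing this $O(\tau^2)|\vec{\xi}|^2$ term into $-\frac{\tau}{2}I_h^n$: one needs a positive lower bound $c_0\leq-\vec{\xi}\cdot\vec{\eta}/|\vec{\xi}|^2$ where $\vec{\eta}_j=\sqrt{h_j}\log G_j^{n,*}$ and $-\tau\,\vec{\xi}\cdot\vec{\eta}=\tau I_h^n$. The paper obtains this from the observation that $\vec{\xi}\cdot\vec{\eta}=0$ forces $G_j^{n,*}$ to be constant, hence $\rho^{n+1}=\rho^n$ and $\vec{\xi}=0$; so off steady states the ratio is bounded below by some $c_0>0$ (depending on the solution), giving $\tau^*=c_0/(\|W\|_\infty(b-a))$. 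Without this step, ``$\tau$ small enough'' is not justified, since both the remainder and the dissipation scale with the same quadratic quantity in $\rho^{n+1}-\rho^n$. Your appeal to ``regularity and a uniform lower bound on $M$'' does not substitute for it.
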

\begin{proof} 
Set $G^{n,*}_{j}=\rho^{n+1}_j/M^n_{j}$ and $\lambda_{j+1/2}=\tau/h_{j+1/2}$, 
so the fully discrete scheme (\ref{fully}) can be rewritten into the following linear system:
\begin{equation}\label{fullyequal}
\begin{aligned}
 h_1\rho_1^n=&(h_1M_1^n+\lambda_{1+1/2} M^n_{1+1/2} )G^{n,*}_1-\lambda_{1+1/2} M^n_{1+1/2}G^{n,*}_{2} ,\\
 h_j\rho_j^n=&-\lambda_{j-1/2} M^n_{j-1/2}G^{n,*}_{j-1}+(h_jM_j^n+\lambda_{j-1/2} M^n_{j-1/2}+\lambda_{j-1/2} M^n_{j+1/2} )G^{n,*}_j\\
 &-\lambda_{j+1/2} M^n_{j+1/2}G^{n,*}_{j+1}  \ \ \ \ \ \ \ \  j=2,3,\cdots , N-1,\\
 h_N\rho_N^n =&-\lambda_{N-1/2} M^n_{N-1/2}G^{n,*}_{N-1}+(h_NM_N^n+\lambda_{N-1/2} M^n_{N-1/2})G^{n,*}_{N}.
\end{aligned}
\end{equation}
Note that the coefficient matrix of linear system (\ref{fullyequal}) is strictly diagonally dominant, therefore (\ref{fullyequal}) has a unique solution for whatever $\tau$ a priori chosen so dose (\ref{fully}) because $\rho^{n+1}_j=G^{n,*}_jM^n_j$. 

(1) (\ref{mass2}) follows from adding all equations in system (\ref{fullyequal}) and using (\ref{ini1}).

(2) Since $\rho_j^{n+1}=M_j^nG_j^{n,*}$ and $M_j^n>0$,  it suffices to prove that 
$$
G_i^{n,*}=\min_{1\leq j\leq N} {\{G_j^{n,*}\}}\geq0.
$$ 
Assume $1<i<N,$ from $i$-th equation of (\ref{fullyequal}) we have
\begin{align*}
h_i\rho_i^n &=-\lambda_{i-1/2} M^n_{i-1/2}G^{n,*}_{i-1}+(h_iM_i^n+\lambda_{i-1/2} M^n_{i-1/2}+\lambda_{i+1/2} M^n_{i+1/2} )G^{n,*}_i-\lambda_{i+1/2} M^n_{i+1/2}G^{n,*}_{i+1} \\
&\leq -\lambda_{i-1/2} M^n_{i-1/2}G^{n,*}_{i}+(h_iM_i^n+\lambda_{i-1/2} M^n_{i-1/2}+\lambda_{i+1/2} M^n_{i+1/2} )G^{n,*}_i-\lambda_{i+1/2} M^n_{i+1/2}G^{n,*}_{i} \\
&=h_iM_i^nG^{n,*}_i.
\end{align*}
Thus $G_i^{n,*}\geq \frac{\rho_i^n}{M_i^n}\geq 0.$ A similar argument applies if $i=1$ or $i=N.$

(3) A direct calculation using (\ref{energyfully}) gives 
\begin{align*}
E_{h}^{n+1}-E_{h}^n =&\sum_{j=1}^Nh_j\left(\rho^{n+1}_j\log(\rho^{n+1}_j)-\rho^{n}_j\log(\rho^{n}_j\right)+V_j\rho^{n+1}_j-V_j\rho^{n}_j+\frac{1}{2}g^{n+1}_j\rho^{n+1}_j -\frac{1}{2}g^{n}_j
\rho^{n}_j )\\
=&\sum_{j=1}^Nh_j((\rho^{n+1}_j-\rho^{n}_j)\log(\rho_j^{n+1})+(\rho^{n+1}_j-\rho^{n}_j)V_j+(\rho^{n+1}_j-\rho^{n}_j)g_j^n\\
&+\frac{1}{2}g_j^n\rho_j^n-g_j^n\rho_j^{n+1}+\frac{1}{2}g_j^{n+1}\rho_j^{n+1}+\rho_j^n\log(\frac{\rho_j^{n+1}}{\rho_j^n}))\\
\leq&\sum_{j=1}^Nh_j((\rho^{n+1}_j-\rho^{n}_j)\log(G_j^{n,*})+\frac{1}{2}g_j^n\rho_j^n-g_j^n\rho_j^{n+1}+\frac{1}{2}g_j^{n+1}\rho_j^{n+1}),
\end{align*}
here we have used $\rho_j^n\log(\frac{\rho_j^{n+1}}{\rho_j^n})\leq \rho_j^n(\frac{\rho_j^{n+1}}{\rho_j^n}-1)$ and mass conservation $\sum_{j=1}^Nh_j(\rho^{n+1}_j-\rho^{n}_j)=0.$ We proceed with
\begin{equation}\label{F1}
\begin{aligned}
\tau \sum_{j=1}^N (\frac{h_j\rho^{n+1}_j-h_j\rho^{n}_j}{\tau})\log(G_j^{n,*}) 
&=\tau\sum_{j=1}^N(\log(G_j^{n,*})(h^{-1}_{j+1/2}M^{n}_{j+1/2}(G^{n,*}_{j+1}-G^{n,*}_j)\\
&\qquad - h^{-1}_{j-1/2}M^{n}_{j-1/2}(G^{n,*}_j-G^{n,*}_{j-1})))\\
&= -\tau\sum_{j=1}^{N-1}h^{-1}_{j+1/2}M^{n}_{j+1/2}(G_{j+1}^{n,*}-G_j^{n,*})(\log{G_{j+1}^{n,*}-\log{G_j^{n,*}}})\\
&=-\tau I^{n}_{h} \leq 0.
\end{aligned}
\end{equation}
Here the sign of $I_h^n$ is implied by the monotonicity of the logarithmic function. 

It remains to find a sufficient condition on time step $\tau$ so that 
\begin{equation}\label{final}
\sum_{j=1}^Nh_j(\frac{1}{2}g_j^n\rho_j^n-g_j^n\rho_j^{n+1}+\frac{1}{2}g_j^{n+1}\rho_j^{n+1})\leq -\frac{\tau}{2}\sum_{j=1}^N(\frac{h_j\rho^{n+1}_j-h_j\rho^{n}_j}{\tau})\log(G_j^{n,*}).
\end{equation}
From $\sum_{j=1}^N h_jg_j^n\rho_j^{n+1} =\sum_{j=1}^N h_jg_j^{n+1}\rho_j^{n}$ it follows that 
\begin{align*}
\sum_{j=1}^N h_j(\frac{1}{2}g_j^n\rho_j^n-g_j^n\rho_j^{n+1}+\frac{1}{2}g_j^{n+1}\rho_j^{n+1})
&=\frac{1}{2}\sum_{j=1}^Nh_j(g_j^{n+1}-g_j^n)(\rho_j^{n+1}-\rho_j^{n})\\
&=\frac{1}{2}\sum_{j=1}^Nh_j\sum_{i=1}^N h_iW(x_i-x_j)(\rho_i^{n+1}-\rho_i^{n})(\rho_j^{n+1}-\rho_j^{n})\\
&\leq \frac{||W||_\infty }{2}\sum_{j=1}^Nh_j\sum_{i=1}^Nh_i|\rho_i^{n+1}-\rho_i^{n}||\rho_j^{n+1}-\rho_j^{n}|\\
&\leq \frac{||W||_\infty(b-a) \tau^2}{2}\sum_{j=1}^Nh_j\left(\frac{\rho_j^{n+1}-\rho_j^{n}}{\tau}\right)^2,
\end{align*}
where we have used the Cauchy-Schwarz inequality and $b-a=\sum_{j=1}^Nh_j.$ Let $\vec{\xi}, \vec{\eta}\in \R^N$ be vectors defined as $\vec{\xi}_j=\frac{\sqrt{h_j}(\rho_j^{n+1}-\rho_j^n)}{\tau}$, $\vec{\eta}_j=\sqrt{h_j}\log{G_j^{n,*}}$, then (\ref{final}) is satisfied if
$$
\frac{||W||_\infty(b-a) \tau^2}{2}|\vec{\xi}|^2+\frac{\tau }{2}\vec{\xi}\cdot \vec{\eta}\leq 0.
$$
We claim that 
\begin{equation}\label{1/2}
 \vec{\xi}\cdot \vec{\eta}=0 \quad \text{ if and only if }\quad \vec{\xi}=0. 
 \end{equation}
 Therefore 
$$
0< c_0 \leq \frac{-\vec{\xi}\cdot \vec{\eta}}{|\vec{\xi}|^2}\leq \frac{|\eta|}{|\xi|} \quad \text{for} \; \xi \not=0,
$$
where $c_0$ may depend on numerical solutions at $t_n$ and $t_{n+1}$. 
We thus obtain (\ref{final}) by taking
$$\tau\leq \tau^*=\frac{c_0}{||W||_{\infty}(b-a)}.
$$
Finally, we verify claim (\ref{1/2}).  If $\vec{\xi}\cdot \vec{\eta}=0,$  then from (\ref{F1}) we have 
$$
0=\vec{\xi}\cdot \vec{\eta}= -\tau\sum_{j=1}^{N-1}h^{-1}_{j+1/2}M^{n}_{j+1/2}(\log{G_{j+1}^{n,*}-\log{G_j^{n,*}}})(G_{j+1}^{n,*}-G_j^{n,*}) \leq 0,
$$
therefore we must have $G_j^{n,*}=constan$ for all $j\in \{1,2,\cdots, N\}$.  This when inserted into scheme (\ref{fully}) leads to 
$$
\rho_j^{n+1}=\rho_j^n  \text{ for all }  j\in \{1,2,\cdots,N\},
$$
thus $\vec{\xi}=0.$ 
\end{proof}

\begin{rem} One could take the Euler forward time discretization to obtain an explicit scheme: 
From $\rho_j^n$, $j=1,2,\cdots, N$, update to get $\rho_j^{n+1}$ by 
\begin{equation*}\label{fullyexp}
\begin{aligned}
\frac{\rho^{n+1}-\rho^n}{\tau}&=\frac{C^{n}_{j+1/2}  - C^{n}_{j-1/2}}{ h_j },  \ \ j=1,2,\cdots, N.
\end{aligned}
\end{equation*}
where
\begin{align*}
& C^{n}_{j+1/2}= \frac{M^n_{j+\frac{1}{2}}}{h_{j+1/2}}\left( \frac{\rho^{n}_{j+1}}{M^n_{j+1}}-\frac{\rho^{n}_j}{M^n_j} \right) \ \ \ \text{for } \ j=1,2,\cdots, N-1,\\ \label{ZFsemi}
&C^{n}_{1/2}=  C^{n}_{N+1/2}=0,
\end{align*}
with $M^n_{j+1/2}=Q_1(x_{j+1/2},\rho^n)$ and $M_j^{n}=Q_1(x_j, \rho^n)$. 
One can show that the positivity preserving property is still met  yet under a CFL condition like $\tau \leq \gamma h^2$. 
\end{rem}


\section{Numerical Method: two dimensional Case} In this section, we extend our method to multi-dimensional problems. For simplicity, 
we only present schemes for the two dimensional initial value problem,

\begin{equation}\label{mainmodel21}
\left \{
\begin{array}{lll}
&\partial_t \rho =\nabla \cdot (\nabla \rho+\rho \nabla (V(x,y)+W*\rho )),  \quad &  (x,y)\in \Omega\subset\R^2,  \quad  t>0, \\
& \rho(x,y,0)=\rho_0(x,y) ,  \quad & (x,y)\in \Omega,
\end{array}
\right.
\end{equation}
on a rectangular domain $\Omega=[a\ , \ b]\times [c\ , \ d]$ subject to zero flux boundary conditions.

For given positive integers $N_x,N_y$,  we partition $\Omega$ by a Cartesian mesh with computational cells 
$$
I_{i,j}=[x_{i-\frac{1}{2}}, \ \ x_{i+\frac{1}{2}}]\times [y_{j-\frac{1}{2}}, \ \ y_{j+\frac{1}{2}}],
$$ 
where $i\in \{1,2,\cdots, N_x\}, j\in \{1,2,\cdots, N_y\}.$  The mesh size is $|I_{i,j}|=h^x_ih^y_j$ with the cell center at $(x_i,\ y_j)=(x_{i-1/2}+\frac{1}{2}h^x_i, y_{j-1/2}+\frac{1}{2}h^y_j)$, we set $h^x_{i+1/2}=(h^x_i+h^x_{i+1})/2,$  $h^y_{j+1/2}=(h^y_j+h^y_{j+1})/2$.
\subsection{Semi-discrete scheme} 
Let $\rho(t)=\{\rho_{i,j}\}$ be the numerical solution, then dimension by dimension spatial discretization of 
$$
\partial_t \rho=\nabla\cdot\left(M\nabla (\frac{\rho}{M})\right), \quad \text{ with } M=e^{-V(x,y)-W*\rho},
$$ 
yields  the following semi-discrete scheme 
\begin{equation}\label{Nosemi2}
\frac{d}{dt}\rho _{i,j}=\frac{C_{i+1/2,j}-C_{i-1/2,j}}{h^x_i}+\frac{C_{i,j+1/2}-C_{i,j-1/2}}{h^y_j},
\end{equation}
where 
\begin{equation*}\label{C2}
\begin{aligned}
& C_{i+1/2,j}=\frac{M_{i+1/2,j}}{h^x_{i+1/2}}\bigg(\frac{\rho_{i+1,j}}{M_{i+1,j}}-    \frac{\rho_{i,j}}{M_{i,j}}   \bigg), \quad i=1,\cdots,N_x-1, j=1,\cdots,N_y,\\
& C_{i,j+1/2}=\frac{M_{i,j+1/2}}{h^y_{j+1/2}}\bigg(\frac{\rho_{i,j+1}}{M_{i,j+1}}-    \frac{\rho_{i,j}}{M_{i,j}}   \bigg), \quad i=1,\cdots,N_x, j=1,\cdots,N_y-1,\\
& C_{1/2,j}=C_{N_x+1/2,j}=C_{i,1/2}=C_{i,N_y+1/2}=0, \quad i=1,\cdots,N_x, j=1,\cdots,N_y,
\end{aligned}
\end{equation*}
with ${M}_{i+1/2,j}=Q_2(x_{i+1/2}, y_{j},\rho )$, ${M}_{i,j+1/2}=Q_2(x_{i}, y_{j+1/2},\rho )$, and $M_{i,j}=Q_2(x_{i}, y_{j},\rho )$. Where
\begin{equation}\label{QQ2}
Q_2(x, y, v)=e^{-V(x, y)-\sum_{k=1}^{N_{x}}\sum_{l=1}^{N_{y}}h^x_kh^y_lW(x_k-x, y_{l}-y ) v_{k,l}}, \text{ for } x, y\in \R, v\in \R^{N_x\times N_y}.
\end{equation}
Let 
\begin{equation*}\label{semienergy2}
E_{h}(t)= \sum_{i=1}^{N_{x}}\sum_{j=1}^{N_{y}}h^x_ih^y_j \left(\rho_{i,j}\log(\rho_{i,j})+V_{i,j}\rho_{i,j}+\frac{1}{2}g_{i,j}\rho_{i,j}\right),
\end{equation*}
be an approximation of the entropy functional (\ref{energy1}), with 
$$
g_{i,j}=\sum_{k=1}^{N_{x}}\sum_{l=1}^{N_{y}}h^x_kh^y_lW(x_k-x_i,y_{l}-y_j)\rho_{k,l}.
$$ 
The following theorem states that the semi-discrete scheme (\ref{Nosemi2}) is conservative, positive,  and energy dissipating.
\begin{thm} The semi-discrete scheme (\ref{Nosemi2}) satisfies the following properties:\\
(1) Conservation of mass: for any $ t>0,$
\begin{equation*}\label{mass41}
\sum_{i=1}^{N_{x}}\sum_{j=1}^{N_{y}}h^x_ih^y_j\rho_{i,j}(t)= \sum_{i=1}^{N_{x}}\sum_{j=1}^{N_{y}}h^x_ih^y_j\rho_{i,j}(0).
\end{equation*} 
(2) Positivity preserving: if $\rho_{i,j}(0)\geq 0$ for all $i\in\{1,\cdots,N_x\}\ ,j\in \{1,\cdots,N_y\}$, then $\rho_{i,j}(t)\geq 0$ for any $t>0.$\\
(3) Entropy dissipation: $\frac{dE_{h}(t)}{dt}\leq -I_{h}$, where
\begin{equation*}\label{dissipation2}
\begin{aligned}
I_{h}=&\sum_{j=1}^{N_{y}}\sum_{i=1}^{N_{x}-1}h^y_jC_{i+1/2,j}\left (\log({\frac{\rho_{i+1,j}}{M_{i+1,j}}})-\log{( \frac{\rho_{i,j}}{M_{i,j}})}\right)\\
&+\sum_{i=1}^{N_{x}}\sum_{j=1}^{N_{y}-1}h^x_i C_{i,j+1/2}\left (\log{(\frac{\rho_{i,j+1}}{M_{i,j+1}})}-\log{( \frac{\rho_{i,j}}{M_{i,j}})}\right) \geq0.
\end{aligned}
\end{equation*}
\end{thm}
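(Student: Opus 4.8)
The plan is to mirror the proof of Theorem~\ref{th2.1}, viewing the two-dimensional scheme (\ref{Nosemi2}) as a dimension-by-dimension assembly of one-dimensional fluxes; the three parts are essentially independent and follow the same skeleton, the only new ingredient being the bookkeeping for the second spatial direction. For part (1) I would multiply each equation in (\ref{Nosemi2}) by $h^x_i h^y_j$ and sum over all $i,j$. For each fixed $j$ the terms $h^y_j(C_{i+1/2,j}-C_{i-1/2,j})$ telescope in $i$, and symmetrically the $y$-fluxes telescope in $j$; the zero-flux conditions $C_{1/2,j}=C_{N_x+1/2,j}=C_{i,1/2}=C_{i,N_y+1/2}=0$ remove all boundary terms, yielding $\frac{d}{dt}\sum_{i,j}h^x_i h^y_j\rho_{i,j}=0$.

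For part (2) I would use the invariant-region argument. Writing the system as $\frac{d}{dt}\vec{\rho}=\vec{F}(\vec{\rho})$ restricted to the mass-conserving hyperplane, I define the closed set $\Sigma_1$ of nonnegative states on it and show that $\vec{F}$ points strictly inward on $\partial\Sigma_1$, i.e. $\vec{F}(\vec{\rho})\cdot\vec{v}<0$ for the outward normal $\vec{v}$. The key point, exactly as in one dimension, is that whenever $\rho_{i,j}=0$ the factor $\rho_{i,j}/M_{i,j}$ drops out of every flux $C_{i\pm1/2,j}$ and $C_{i,j\pm1/2}$ touching that cell. Hence for each index in the active set $S=\{(i,j):\rho_{i,j}=0\}$ the surviving flux contributions are built only from nonnegative neighbor values with positive prefactors $M_{\cdot}/h_{\cdot}$; pairing these against the outward normal, whose nonzero entries are negative on $S$, forces $\vec{F}(\vec{\rho})\cdot\vec{v}<0$.

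For part (3) I would differentiate $E_h(t)$. By the symmetry of $W$ one shows, as in (\ref{convolutionderivative}), that the interaction energy contributes $\sum_{i,j}h^x_i h^y_j g_{i,j}\frac{d\rho_{i,j}}{dt}$, so that $\frac{dE_h}{dt}=\sum_{i,j}h^x_i h^y_j(\log(\rho_{i,j}/M_{i,j})+1)\frac{d\rho_{i,j}}{dt}$. Substituting (\ref{Nosemi2}) and performing discrete summation by parts separately in $i$ and in $j$ — discarding endpoint terms via the zero-flux conditions in each direction — recasts the sum as $-I_h$, where each flux $C_{i+1/2,j}$ (resp. $C_{i,j+1/2}$) multiplies the corresponding difference of $\log(\rho/M)$ values. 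Nonnegativity $I_h\geq0$ then follows termwise from $(x-y)(\log x-\log y)\geq0$ applied to the factored form of the fluxes, exactly as in the one-dimensional argument.

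The main obstacle is organizational rather than conceptual: carrying out the two-directional summation by parts cleanly. One must apply Abel summation in $i$ for each fixed $j$ and in $j$ for each fixed $i$, keep track of the $h^y_j$ and $h^x_i$ weights attached to the respective fluxes, and check that the boundary terms cancel against the zero-flux conditions in each direction independently. Once this is arranged, the sign and invariance arguments transfer verbatim from the proof of Theorem~\ref{th2.1}.
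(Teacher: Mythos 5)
Your proposal is correct and follows essentially the same route as the paper, which simply states that the proof is analogous to that of Theorem \ref{th2.1} (mass conservation by telescoping with zero-flux conditions, positivity via the invariant-region/outward-normal argument, and entropy dissipation via the symmetry of $W$ plus dimension-by-dimension summation by parts). No genuinely different ideas are introduced, and the dimensional bookkeeping you describe is exactly the omitted detail.
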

\begin{proof}The proof is similar to that of Theorem \ref{th2.1}, details are therefore omitted.  
\end{proof}

\subsection{Fully discrete scheme} Let $\rho_{i,j}^n$ approximate ${\rho_{i,j}}(t_n)$,  then (\ref{Nosemi2}) gives the following fully discrete scheme,
\begin{equation}\label{fully2}
\frac{\rho^{n+1} _{i,j}-\rho^n_{i,j}}{\tau}=\frac{C^{n,*}_{i+1/2,j}-C^{n,*}_{i-1/2,j}}{h^x_i}+\frac{C^{n,*}_{i,j+1/2}-C^{n,*}_{i,j-1/2}}{h^y_j},
\end{equation}
where 
\begin{equation*}\label{C2}
\begin{aligned}
& C^{n,*}_{i+1/2,j}=\frac{M^n_{i+1/2,j}}{h^x_{i+1/2}}\bigg(\frac{\rho^{n+1}_{i+1,j}}{M^n_{i+1,j}}-    \frac{\rho^{n+1}_{i,j}}{M^n_{i,j}}   \bigg), \quad i=1,\cdots,N_x-1, j=1,\cdots,N_y,\\
& C^{n,*}_{i,j+1/2}=\frac{M^n_{i,j+1/2}}{h^y_{j+1/2}}\bigg(\frac{\rho^{n+1}_{i,j+1}}{M^n_{i,j+1}}-    \frac{\rho^{n+1}_{i,j}}{M^n_{i,j}}   \bigg), \quad i=1,\cdots,N_x, j=1,\cdots,N_y-1,\\
& C^{n,*}_{1/2,j}=C^{n,*}_{N_x+1/2,j}=C^{n,*}_{i,1/2}=C^{n,*}_{i,N_y+1/2}=0, \quad i=1,\cdots,N_x, j=1,\cdots,N_y,
\end{aligned}
\end{equation*}
with $M^n_{i+1/2,j}=Q_2(x_{i+1/2}, y_{j},\rho^n )$, ${M^n}_{i,j+1/2}=Q_2(x_{i}, y_{j+1/2},\rho^n )$, and $M^n_{i,j}=Q_2(x_{i}, y_{j},\rho^n )$. 

The initial data is chosen as 
\begin{equation}\label{initial2}
\rho_{i,j}^0=\frac{1}{|I_{i,j}|}\int_{I_{i,j}}\rho_0(x,y) dxdy.
\end{equation}
In 2D case, a discrete version of entropy (\ref{energy1}) may be defined as
\begin{equation}\label{energyfully2}
E_{h}^n=\sum_{i=1}^{N_x}\sum_{j=1}^{N_y}h^x_ih^y_j\left(\rho^{n}_{i, j}\log(\rho^{n}_{i, j})+V_{i, j}\rho^{n}_{i, j}+\frac{1}{2}g^{n}_{i,j} \rho^{n}_{i,j} \right),
\end{equation}
where 
$$
g_{i,j}^n=\sum_{k=1}^{N_{x}}\sum_{l=1}^{N_{y}}h^x_kh^y_lW(x_k-x_i,y_{l}-y_j)\rho^n_{k,l}.
$$
\begin{thm}
The fully discrete scheme (\ref{fully2}) has the following properties:\\
(1) Conservation of mass:
\begin{equation}\label{mass22}
 \sum_{l=1}^{N_x}\sum_{j=1}^{N_y}h^x_ih^y_j\rho_{i,j}^{n}=\int_{\Omega} \rho_0(x,y)dx dy,\ \text{ for all } n\geq 1.
\end{equation}
(2) Positivity preserving: if $\rho_{i,j}^n\geq 0$ for all $i\in\{1,\cdots,N_x\}$ and $j\in \{1,\cdots,N_y\}$, then 
$$
\rho_{i,j}^{n+1}\geq 0.
$$
(3) Entropy dissipation: there exists $\tau^*>0$ such that if $\tau\in(0,\ \tau^*)$, then
\begin{equation}\label{FE2}
E_{h}^{n+1}-E_{h}^n\leq -\frac{\tau}{2} I^n_{h},
\end{equation}
where
\begin{equation*}\label{dissipationfully2}
\begin{aligned}
I^n_{h}=&\sum_{j=1}^{N_{y}}\sum_{i=1}^{N_{x}-1}h^y_jC^{n,*}_{i+1/2,j}(\log{\frac{\rho^{n+1}_{i+1,j}}{M^n_{i+1,j}}}-\log{ \frac{\rho^{n+1}_{i,j}}{M^n_{i,j}}})\\
&+\sum_{i=1}^{N_{x}}\sum_{j=1}^{N_{y}-1}h^x_i C^{n,*}_{i,j+1/2}(\log{\frac{\rho^{n+1}_{i,j+1}}{M^n_{i,j+1}}}-\log{ \frac{\rho^{n+1}_{i,j}}{M^n_{i,j}}}) \geq0.
\end{aligned}
\end{equation*}
\end{thm}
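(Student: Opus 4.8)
The plan is to follow the proof of Theorem~\ref{thm3.1} line by line, replacing the single index $j$ by the pair $(i,j)$ and the scalar weight $h_j$ by the tensor weight $h^x_ih^y_j$. First I would set $G^{n,*}_{i,j}=\rho^{n+1}_{i,j}/M^n_{i,j}$, introduce $\lambda^x_{i+1/2}=\tau/h^x_{i+1/2}$ and $\lambda^y_{j+1/2}=\tau/h^y_{j+1/2}$, and multiply (\ref{fully2}) by $h^x_ih^y_j$ to recast the scheme as a linear system in the unknowns $\{G^{n,*}_{i,j}\}$. Each interior equation then has diagonal coefficient $h^x_ih^y_jM^n_{i,j}+h^y_j\lambda^x_{i-1/2}M^n_{i-1/2,j}+h^y_j\lambda^x_{i+1/2}M^n_{i+1/2,j}+h^x_i\lambda^y_{j-1/2}M^n_{i,j-1/2}+h^x_i\lambda^y_{j+1/2}M^n_{i,j+1/2}$, together with four negative off-diagonal coefficients whose absolute values sum to exactly the flux part of the diagonal. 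Since $h^x_ih^y_jM^n_{i,j}>0$, the matrix is strictly diagonally dominant, hence invertible for any $\tau>0$, so the scheme is uniquely solvable. Mass conservation then follows by summing all equations: the discrete fluxes telescope in each coordinate direction and the boundary fluxes vanish by $C^{n,*}_{1/2,j}=C^{n,*}_{N_x+1/2,j}=C^{n,*}_{i,1/2}=C^{n,*}_{i,N_y+1/2}=0$, leaving $\sum_{i,j}h^x_ih^y_j\rho^{n+1}_{i,j}=\sum_{i,j}h^x_ih^y_j\rho^n_{i,j}$, which with the choice (\ref{initial2}) gives (\ref{mass22}).

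For positivity, since $M^n_{i,j}>0$ it suffices to show $G^{n,*}_{i,j}\ge0$. Let the minimum of $\{G^{n,*}_{i,j}\}$ be attained at $(i_0,j_0)$. Replacing each of its four neighbors by $G^{n,*}_{i_0,j_0}$ in the $(i_0,j_0)$-equation can only lower the right-hand side, since the off-diagonal coefficients are negative; this yields $h^x_{i_0}h^y_{j_0}\rho^n_{i_0,j_0}\le h^x_{i_0}h^y_{j_0}M^n_{i_0,j_0}G^{n,*}_{i_0,j_0}$, so $G^{n,*}_{i_0,j_0}\ge\rho^n_{i_0,j_0}/M^n_{i_0,j_0}\ge0$. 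The same discrete-maximum-principle argument applies when $(i_0,j_0)$ sits on the boundary, with the corresponding fluxes set to zero.

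For the entropy inequality I would expand $E^{n+1}_h-E^n_h$ exactly as in Theorem~\ref{thm3.1}. Using the concavity bound $\rho^n_{i,j}\log(\rho^{n+1}_{i,j}/\rho^n_{i,j})\le\rho^{n+1}_{i,j}-\rho^n_{i,j}$ and mass conservation, the internal-energy increment is controlled by $\sum_{i,j}h^x_ih^y_j(\rho^{n+1}_{i,j}-\rho^n_{i,j})\log G^{n,*}_{i,j}$, which a summation by parts in both directions identifies with $-\tau I^n_h\le0$, the sign coming from the monotonicity of $\log$. The symmetry $W(x,y)=W(-x,-y)$ gives $\sum_{i,j}h^x_ih^y_jg^n_{i,j}\rho^{n+1}_{i,j}=\sum_{i,j}h^x_ih^y_jg^{n+1}_{i,j}\rho^n_{i,j}$, so the interaction terms collapse to $\tfrac12\sum_{i,j}h^x_ih^y_j(g^{n+1}_{i,j}-g^n_{i,j})(\rho^{n+1}_{i,j}-\rho^n_{i,j})$, which by $\|W\|_\infty$ and Cauchy--Schwarz is bounded by $\tfrac12\|W\|_\infty|\Omega|\tau^2\sum_{i,j}h^x_ih^y_j((\rho^{n+1}_{i,j}-\rho^n_{i,j})/\tau)^2$, where $|\Omega|=(b-a)(d-c)=\sum_{i,j}h^x_ih^y_j$.

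Finally I would set $\vec\xi_{i,j}=\sqrt{h^x_ih^y_j}(\rho^{n+1}_{i,j}-\rho^n_{i,j})/\tau$ and $\vec\eta_{i,j}=\sqrt{h^x_ih^y_j}\log G^{n,*}_{i,j}$ in $\R^{N_xN_y}$, reducing (\ref{FE2}) to $\tfrac12\|W\|_\infty|\Omega|\tau^2|\vec\xi|^2+\tfrac\tau2\vec\xi\cdot\vec\eta\le0$. The crucial lemma is the 2D analog of (\ref{1/2}): $\vec\xi\cdot\vec\eta=0$ if and only if $\vec\xi=0$. This holds because $\vec\xi\cdot\vec\eta=-\tau I^n_h\le0$, and equality forces both directional contributions to $I^n_h$ to vanish, i.e. $\log G^{n,*}_{i+1,j}=\log G^{n,*}_{i,j}$ and $\log G^{n,*}_{i,j+1}=\log G^{n,*}_{i,j}$ throughout the grid, so $G^{n,*}_{i,j}$ is a global constant; inserting this into (\ref{fully2}) gives $\rho^{n+1}_{i,j}=\rho^n_{i,j}$, hence $\vec\xi=0$. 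Consequently $0<c_0\le-\vec\xi\cdot\vec\eta/|\vec\xi|^2$ for $\vec\xi\ne0$, and (\ref{FE2}) holds whenever $\tau\le\tau^*=c_0/(\|W\|_\infty|\Omega|)$. I expect the equality-case analysis to be the part needing the most care: in 2D the entropy production $I^n_h$ is a sum of two nonnegative directional pieces, and one must use connectedness of the grid graph to argue that their simultaneous vanishing pins $G^{n,*}$ down as a single constant before concluding $\vec\xi=0$.
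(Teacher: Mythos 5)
Your proposal is correct and follows essentially the same route as the paper: recast the scheme as a strictly diagonally dominant linear system in $G^{n,*}_{i,j}$, prove positivity by the minimum-entry argument, and obtain the entropy estimate via the same $\log x\le x-1$ bound, the $W$-symmetry identity, the Cauchy--Schwarz bound with constant $\|W\|_\infty|\Omega|$, and the vectors $\vec\xi,\vec\eta$ with the same $\tau^*=c_0/(\|W\|_\infty|\Omega|)$. Your explicit treatment of the equality case (both directional pieces of $I^n_h$ vanishing forces $G^{n,*}$ constant by connectedness of the grid) fills in a step the paper only cites as ``similar to the 1D case.''
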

\begin{proof}For simplicity of analysis we rewrite the scheme (\ref{fully2}) as
\begin{equation}\label{fully2equal}
\begin{aligned}
 h^x_ih^y_j\rho_{i, j}^n=&(h^x_ih^y_j M_{i,j}^n+\tau \tilde{M}^n_{i+1/2,j}+\tau \tilde{M}^n_{i-1/2,j}+\tau \tilde{M}^n_{i,j+1/2}+\tau \tilde{M}^n_{i,j-1/2})G_{i, j}^{n,*} \\
 &-\tau \tilde{M}^n_{i+1/2,j}G^{n,*}_{i+1,j} - \tau \tilde{M}^n_{i-1/2,j}G^{n,*}_{i-1,j}-\tau \tilde{M}^n_{i,j+1/2}G^{n,*}_{i,j+1}-\tau \tilde{M}^n_{i,j-1/2}G^{n,*}_{i,j-1}, 
\end{aligned}
\end{equation}

with the following notations
$$
	\tilde{M}^n_{i+1/2,j}=\frac{h^y_j}{h^x_{i+1/2}}M^n_{i+1/2,j},\quad  \tilde{M}^n_{i,j+1/2}=\frac{h^x_i}{h^y_{j+1/2}}M^n_{i,j+1/2}, \quad G^{n,*}_{i,j}=\frac{\rho^{n+1}_{i,j}}{M^n_{i,j}}.
$$
Note that the coefficient matrix of the linear system (\ref{fully2equal}) (when consider $G^{n,*}_{i,j}$ as unknowns) is strictly diagonally dominant, therefore (\ref{fully2equal}) always has a unique solution.

(1) Adding all equations in (\ref{fully2}) and using (\ref{initial2}) lead to (\ref{mass22}).

(2) Since $\rho_{i,j}^{n+1}=M_{i,j}^nG_{i,j}^{n,*}$ and $M_{i,j}^n>0$, it suffices to prove that $G_{k,l}^{n,*}=\min_{\{i,j\}}G_{i,j}^{n,*}\geq 0,$ the corresponding equation is
\begin{align*}
 h^x_kh^y_l\rho_{k,l}^n=&( h^x_kh^y_lM_{k,l}^n+\tau \tilde{M}^n_{k+1/2,l}+\tau \tilde{M}^n_{k-1/2,l}+\tau \tilde{M}^n_{k,l+1/2}+\tau \tilde{M}^n_{k,l-1/2})G_{k,l}^{n,*} \\
 &-\tau \tilde{M}^n_{k+1/2,l}G^{n+1}_{k+1,l} - \tau \tilde{M}^n_{k-1/2,l}G^{n+1}_{k-1,l}-\tau \tilde{M}^n_{k,l+1/2}G^{n+1}_{k,l+1}-\tau \tilde{M}^n_{k,l-1/2}G^{n,*}_{k,l-1}\\
 \leq &  h^x_kh^y_lM_{k,l}^nG_{k,l}^{n,*},
\end{align*}
therefore $G_{k,l}^{n,*}\geq0.$

(3) A direct calculation using (\ref{energyfully2}) gives
\begin{equation}\label{eq21}
\begin{aligned}
E_{h}^{n+1}-E_{h}^n =&\sum_{i=1}^{N_x}\sum_{j=1}^{N_y}h^x_ih^y_j(\rho^{n+1}_{i,j}\log(\rho^{n+1}_{i,j})-\rho^{n}_{i,j}\log(\rho^{n+1}_{i,j})+\rho^{n}_{i,j}\log(\frac{\rho^{n+1}_{i,j}}{\rho^{n}_{i,j}})\\
&+V_j\rho^{n+1}_{i,j}+\frac{1}{2}g^{n+1}_{i,j}\rho^{n+1}_{i,j}-V_{i,j}\rho^{n}_{i,j}-\frac{1}{2}g^{n}_{i,j}\rho^{n}_{i,j} )\\
\leq& \sum_{i=1}^{N_x}\sum_{j=1}^{N_y}h^x_ih^y_j(\log(G_{i,j}^{n,*})(\rho^{n+1}_{i,j}-\rho^{n}_{i,j})+   \frac{1}{2}g_{i,j}^n\rho_{i,j}^n-g_j^n\rho_{i,j}^{n+1}+\frac{1}{2}g_{i,j}^{n+1}\rho_{i,j}^{n+1}) ,
\end{aligned}
\end{equation}
where we have used $\log(x)\leq x-1$ and mass conservation property. By the symmetrical property of $W(x,y)$  we have
$$
\sum_{i=1}^{N_x}\sum_{j=1}^{N_y}h^x_ih^y_jg_{i,j}^n\rho_{i,j}^{n+1} =\sum_{i=1}^{N_x}\sum_{j=1}^{N_y}h^x_ih^y_jg_{i,j}^{n+1}\rho_{i,j}^{n},
$$
so that 
\begin{align*}
&\sum_{i=1}^{N_x}\sum_{j=1}^{N_y}h^x_ih^y_j(\frac{1}{2}g_{i,j}^n\rho_{i,j}^n-g_{i,j}^n\rho_{i,j}^{n+1}+\frac{1}{2}g_{i,j}^{n+1}\rho_{i,j}^{n+1})\\
&=\frac{1}{2}\sum_{i=1}^{N_x}\sum_{j=1}^{N_y}h^x_ih^y_j(g_{i,j}^{n+1}-g_{i,j}^n)(\rho_{i,j}^{n+1}-\rho_{i,j}^{n})\\
&=\frac{1}{2}\sum_{i=1}^{N_x}\sum_{j=1}^{N_y}h^x_ih^y_j(\sum_{k=1}^{N_x}\sum_{l=1}^{N_y}h^x_kh^y_lW(x_i-x_k,y_j-y_l)(\rho_{k,l}^{n+1}-\rho_{k,l}^{n}))(\rho_{i,j}^{n+1}-\rho_{i,j}^{n})\\
&\leq \frac{||W||_{\infty}}{2} \left( \sum_{i=1}^{N_x}\sum_{j=1}^{N_y}h^x_ih^y_j|\rho_{i,j}^{n+1}-\rho_{i,j}^{n}| \right)^2\\
&\leq \frac{||W||_{\infty}|\Omega|}{2}\sum_{i=1}^{N_x}\sum_{j=1}^{N_y}h^x_ih^y_j(\rho_{i,j}^{n+1}-\rho_{i,j}^{n})^2,
\end{align*}
where  $|\Omega|=\sum_{i=1}^{N_x}\sum_{j=1}^{N_y}h^x_ih^y_j$. 
Substitution of the above inequality into (\ref{eq21}) yields 
\begin{equation*}\label{eq42}
\begin{aligned}
E_{h}^{n+1}-E_{h}^n &\leq \sum_{i=1}^{N_x}\sum_{j=1}^{N_y}h^x_ih^y_j\log(G_{i,j}^{n,*})(\rho^{n+1}_{i,j}-\rho^{n}_{i,j})+\frac{||W||_{\infty}|\Omega|}{2}\sum_{i=1}^{N_x}\sum_{j=1}^{N_y}h^x_ih^y_j (\rho^{n+1}_{i,j}-\rho^{n}_{i,j})^2\\
& :=F^n_1+F^n_2. 
\end{aligned}
\end{equation*}
We proceed using summation by parts and boundary conditions so that 
\begin{align*}
F_1^n=&\tau \sum_{i=1}^{N_x}\sum_{j=1}^{N_y}\log(G_{i,j}^{n,*})(\tilde{M}^n_{i+1/2,j}(G^{n,*}_{i+1,j}-G^{n,*}_{i,j})- \tilde{M}^n_{i-1/2,j}(G^{n,*}_{i,j}-G^{n,*}_{i-1,j}))\\
&+\tau \sum_{i=1}^{N_x}\sum_{j=1}^{N_y}\log(G_{i,j}^{n,*})(\tilde{M}^n_{i,j+1/2}(G^{n,*}_{i,j+1}-G^{n,*}_{i,j})- \tilde{M}^n_{i,j-1/2}(G^{n,*}_{i,j}-G^{n,*}_{i,j-1}))\\
=&-\tau \sum_{i=1}^{N_x-1}\sum_{j=1}^{N_y}\tilde{M}^n_{i+1/2,j}(\log(G_{i+1,j}^{n,*})-\log(G_{i,j}^{n,*}))(G^{n,*}_{i+1,j}-G^{n,*}_{i,j})\\
&-\tau \sum_{i=1}^{N_x}\sum_{j=1}^{N_y-1}\tilde{M}^n_{i,j+1/2}(\log(G_{i,j+1}^{n,*})-\log(G_{i,j}^{n,*}))(G^{n,*}_{i,j+1}-G^{n,*}_{i,j})\\
=&- \tau I^n_{h}.
\end{align*}
It remains to figure out a condition on $\tau$ so that $F_2^n+\frac{1}{2}F_1^n\leq 0.$  Let $\vec{\xi}, \vec{\eta}\in \R^{N_xN_y}$ be vectors defined as:
$$\vec{\xi}=\left(\frac{\sqrt{h^x_{1}h^y_1}(\rho_{1,1}^{n+1}-\rho_{1,1}^n)}{\tau},\cdots, \frac{\sqrt{h^x_{N_x}h_1^y}(\rho_{N_x,1}^{n+1}-\rho_{N_x,1}^n)}{\tau},\cdots, \frac{\sqrt{h^x_{N_x}h^y_{N_y}}(\rho_{N_x,N_y}^{n+1}-\rho_{N_x,N_y}^n)}{\tau}\right)^T$$
$$\vec{\eta}=(\sqrt{h^x_{1}h^y_1}\log(G_{1,1}^{n,*}),\cdots,\sqrt{h^x_{N_x}h_1^y}\log(G_{N_x,1}^{n,*}), \cdots,  \sqrt{h^x_{N_x}h^y_{N_y}}\log(G_{N_x,N_y}^{n,*}))^T,$$
then $F_2^n+\frac{1}{2}F_1^n\leq 0$ if
$$
\tau^2||W||_{\infty}|\Omega|\vec{\xi}|^2+\tau \vec{\xi}\cdot \vec{\eta}\leq0.
$$
In similar manner as in 1D case, we can show that $\vec{\xi}\cdot \vec{\eta}=0$ if and only if $\vec{\xi}=0.$ 
Therefore 
$$
0< c_0 \leq \frac{-\vec{\xi}\cdot \vec{\eta}}{|\vec{\xi}|^2}\leq \frac{|\eta|}{|\xi|} \quad \text{for} \; \xi \not=0,
$$
where $c_0$ may depend on numerical solutions at $t_n$ and $t_{n+1}$. 
We thus obtain the desired result (\ref{FE2}) by taking 
$
\tau\leq \tau^*=\frac{c_0}{||W||_{\infty}|\Omega|}.$
\end{proof}
\begin{rem}
The schemes presented so far apply well to the general class of nonlinear nonlocal equations (\ref{e2}), based on the reformulation 
$$
\partial_{t}\rho=\nabla\cdot(M\nabla \frac{\rho}{M}),
$$
where $M=\rho e^{-H'(\rho)-V(\mathbf{x})-W*\rho}$ for $\rho$ away from zero. The numerical solution may be oscillatory at low density, for which one could use either upwind numerical fluxes or non-oscillatory limiters as a remedy \cite{FiniteV}. Note that for the aggregation equation (in the absence of diffusion), particle methods have been developed in \cite{CB16, PCCC18}; Particle methods naturally conserve mass and positivity, yet a large number of particles is often required to resolve finer properties of solutions.
\end{rem}

\section{Second order in-time discretization}
The numerical schemes presented so far are only first order in time.   In this section we extend these schemes with a second order in time discretization.

\subsection{Second order scheme for 1D problem}
We replace (\ref{fully}) by a two step scheme
\begin{subequations}\label{PC1}
\begin{align}
\label{fullyPR+}
&\frac{\rho^{*}_{j}-\rho^n_{j}}{\tau/2} =\frac{C^*_{j+1/2}-C^*_{j-1/2}}{h_j}, \quad  j=1,2,\cdots, N,\\
\label{fullyCR+}
& \rho_{j}^{n+1}=2\rho_{j}^*-\rho_{j}^n,\quad  j=1,2,\cdots, N,
\end{align}
\end{subequations}
where 
\begin{equation*}\label{FL}
\begin{aligned}
& C^*_{j+1/2}= \frac{M^*_{j+\frac{1}{2}}}{h_{j+1/2}}(\frac{\rho^*_{j+1}}{M^*_{j+1}}-\frac{\rho^*_j}{M^*_j}), \quad \text{for } \ j=1,2,\cdots, N-1,\\ 
& C^*_{1/2}=0,\quad C^*_{N+1/2}=0,
\end{aligned}
\end{equation*}
with $M^*_{j+1/2}=Q_1(x_{j+1/2}, \frac{3}{2}\rho^n-\frac{1}{2}\rho^{n-1})$ and $M^*_{j}=Q_1(x_{j}, \frac{3}{2}\rho^n-\frac{1}{2}\rho^{n-1})$.
The scheme (\ref{PC1}) has following properties.
\begin{thm}
Let $\rho^{n+1}_j$ be obtained from (\ref{PC1}), then 

(1) Conservation of mass:
\begin{equation*}\label{mass2}
\sum_{j=1}^Nh_j\rho_j^{n}= \int_{\Omega} \rho_0(x) dx, \ \ \text{ for } n\geq 1.
\end{equation*}

(2) Positivity preserving: if $\rho_j^n\geq0$ for all $j=1,\cdots, N,$ then
$$\rho_j^{n+1}\geq 0, \quad j=1,\cdots,N,$$

provided $\tau$ is sufficiently small.
\end{thm}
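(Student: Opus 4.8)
The plan is to treat the two-step scheme (\ref{PC1}) one stage at a time, reducing each claim to the already-established properties of the first-order implicit scheme (\ref{fully}). The predictor (\ref{fullyPR+}) is structurally identical to (\ref{fully}): it is the backward-Euler update with time step $\tau/2$ in which the Maxwellian $M^n$ is replaced by the extrapolated $M^*_{j}=Q_1(x_j,\tfrac32\rho^n-\tfrac12\rho^{n-1})$ and $M^*_{j+1/2}$. Since $M^*>0$ and the zero-flux conditions $C^*_{1/2}=C^*_{N+1/2}=0$ still hold, summing (\ref{fullyPR+}) against $h_j$ telescopes the right-hand side to zero, so that $\sum_j h_j\rho^*_j=\sum_j h_j\rho^n_j$. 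For Part~(1) I then insert the corrector (\ref{fullyCR+}) and compute $\sum_j h_j\rho^{n+1}_j=2\sum_j h_j\rho^*_j-\sum_j h_j\rho^n_j=\sum_j h_j\rho^n_j$; induction on $n$ together with the choice of initial data gives conservation of the exact initial mass $\int_\Omega\rho_0\,dx$.

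For Part~(2) I would first establish nonnegativity of the intermediate state $\rho^*$. Writing $G^*_j=\rho^*_j/M^*_j$, the predictor becomes a linear system whose coefficient matrix is strictly diagonally dominant with positive diagonal and nonpositive off-diagonal entries, i.e.\ an M-matrix, exactly as in (\ref{fullyequal}). The argument in the proof of Theorem~\ref{thm3.1}(2) then applies verbatim, with $\tau/2$ and $M^*$ in place of $\tau$ and $M^n$: evaluating the equation at the index realizing $\min_j G^*_j$ shows that this minimum is bounded below by $\rho^n_i/M^*_i\ge0$, hence $\rho^*_j=M^*_jG^*_j\ge0$ for all $j$.

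The genuine difficulty is the corrector, because the extrapolation $\rho^{n+1}_j=2\rho^*_j-\rho^n_j$ is not positivity preserving for free; I must show $\rho^*_j\ge\tfrac12\rho^n_j$. The key is a quantitative bound $|\rho^*_j-\rho^n_j|\le C\tau$ with $C$ independent of $\tau$. To obtain it cheaply I would combine nonnegativity with the mass identity $\sum_j h_jM^*_jG^*_j=\sum_j h_j\rho^n_j=:\mathcal M$ to get the a priori upper bound $0\le G^*_j\le \mathcal M/(h_jM^*_j)$; this bounds the fluxes, $|C^*_{j\pm1/2}|\le C_1$, and then (\ref{fullyPR+}) yields $|\rho^*_j-\rho^n_j|=\tfrac{\tau}{2h_j}\,|C^*_{j+1/2}-C^*_{j-1/2}|\le C\tau$. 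With this estimate the conclusion splits by cases: if $\rho^n_j=0$ then $\rho^{n+1}_j=2\rho^*_j\ge0$ automatically, while if $\rho^n_j>0$ then $\rho^{n+1}_j\ge\rho^n_j-2C\tau\ge0$ as soon as $\tau\le\rho^n_j/(2C)$. Taking $\tau^*=m/(2C)$ with $m=\min\{\rho^n_j:\rho^n_j>0\}$ secures positivity for $\tau\le\tau^*$. I expect the main obstacle to be precisely the non-uniformity of this threshold: $\tau^*$ degenerates as the smallest positive density approaches zero, which is exactly the regime that the local scaling limiter of Section~5 is designed to handle, so the honest statement can only be \emph{``provided $\tau$ is sufficiently small.''}
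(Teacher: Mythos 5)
Your Part (1) matches the paper's (terse) argument exactly: telescoping the predictor and then the corrector. For Part (2), your proof is correct but follows a genuinely different route, and it is worth seeing what each buys. The paper does not treat the two stages separately: using $\rho^*_j=(\rho^{n+1}_j+\rho^n_j)/2$ it rewrites (\ref{PC1}) as a single Crank--Nicolson-type linear system (\ref{G}) for $G^{n+1}_j=\rho^{n+1}_j/M^*_j$, whose coefficient matrix is strictly diagonally dominant with nonpositive off-diagonal entries and whose right-hand side $b_j$ is a nonnegative combination of the $G^n_j$ as soon as
$\tau\leq 2h_jM^*_j/(g^*_{j+1/2}+g^*_{j-1/2})$;
the minimum-principle argument of Theorem \ref{thm3.1}(2) then gives $G^{n+1}_j\geq0$. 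That threshold is of order $h^2$ and depends only on the mesh and on $M^*$, which is bounded above and below in terms of $V$, $W$ and the conserved mass --- it does \emph{not} see how small the positive entries of $\rho^n$ are. Your decomposition (unconditional nonnegativity of $\rho^*$ via the M-matrix argument, then the Lipschitz bound $|\rho^*_j-\rho^n_j|\leq C\tau$ from the a priori bound $G^*_j\leq \mathcal M/(h_jM^*_j)$, then the case split on $\rho^n_j=0$ versus $\rho^n_j>0$) is sound, and you correctly flag its weakness: your $\tau^*$ degenerates with $m=\min\{\rho^n_j:\rho^n_j>0\}$, which has no a priori lower bound and can shrink from one time step to the next, so the condition is not a usable CFL-type restriction; it also forces the crude flux bound $C\sim h^{-3}$, making the threshold worse in $h$ as well. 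Both arguments establish the literal statement ``provided $\tau$ is sufficiently small,'' but the paper's recombination of the two stages is what yields a threshold uniform in the solution, which is the practically meaningful version of the claim; your version is more modular and reuses Theorem \ref{thm3.1} more directly, at the cost of that uniformity.
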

\begin{proof}
(1) From the scheme construction, the conservation property remains hold.  

(2) 
Setting 
$$
G^{n}_j=\frac{\rho^n_j}{M^{*}_j}, \quad g^*_{j+1/2}=\frac{M^*_{j+1/2}}{h_{j+1/2}},
$$
and a careful regrouping leads to the following linear system  
\begin{equation}\label{G}
\begin{aligned}
& \left(  M^*_{1}+ \frac{\tau}{2h_1}g^*_{3/2}\right)G^{n+1}_1 -\frac{\tau}{2h_1}g^*_{3/2}G^{n+1}_{2}=b_1,\\
& \left( M^*_{ j}+ \frac{\tau}{2h_j}(g^*_{j+1/2}+g^*_{j-1/2})\right)G^{n+1}_j -\frac{\tau}{2h_j}g^*_{j+1/2}G^{n+1}_{j+1}- \frac{\tau}{2h_j}g^*_{j-1/2}G^{n+1}_{j-1}=b_j,\\
& \left( M^*_{ N}+ \frac{\tau}{2h_N}g^*_{N-1/2}\right)G^{n+1}_N - \frac{\tau}{2h_N}g^*_{N-1/2}G^{n+1}_{N-1}=b_{N},
\end{aligned}
\end{equation}
where $j=1, \cdots, N-1$,  with the right hand side vector given by
\begin{equation*}
\begin{aligned}
& b_1=\left(M^*_{ 1}- \frac{\tau}{2h_1}g^*_{3/2}\right)G^{n}_1+\frac{\tau}{2h_1}g^*_{3/2}G^{n}_{2},\\
&b_j= \left( M^*_{ j}- \frac{\tau}{2h_j}(g^*_{j+1/2}+g^*_{j-1/2})\right)G^{n}_j +\frac{\tau}{2h_j}g^*_{j+1/2}G^{n}_{j+1}+ \frac{\tau}{2h_j}g^*_{j-1/2}G^{n}_{j-1},\; j=1, \cdots, N-1, \\
&b_{N}= \left( M^*_{ N}- \frac{\tau}{2h_N}g^*_{N-1/2}\right)G^{n}_N + \frac{\tau}{2h_N}g^*_{N-1/2}G^{n}_{N-1}.
\end{aligned}
\end{equation*}
The linear system (\ref{G})  admits a unique solution $\{G^{n+1}_j\}$ since its coefficient matrix is strictly diagonally dominant. 
Following the proof of (2) in Theorem \ref{thm3.1}, we see that $G^{n+1}_j\geq0$ is ensured if each $b_j \geq 0$, which is the case provided 
$$
\tau  \leq \min \left\{\frac{2h_1 M_{ 1}^*}{g^*_{ 3/2} },\quad  \min_{1< j< N} \frac{2h_j M_{ j}^*}{g^*_{ j+1/2} +g^*_{ j-1/2}}, \quad \frac{2h_N M_{ N}^*}{g^*_{ N-1/2}} \right\}.
$$
The stated result thus follows. 
\end{proof}

For large time step $\tau$, non-negativity of $\rho_{ j}^{n+1}$ obtained by the second order scheme (\ref{PC1}) may not  be guaranteed, we introduce a local limiter to resolve the solution positivity. 

\subsection{Local limiter and algorithm}
We begin to design a local limiter to restore positivity of  $\{c_j\}_{j=1}^N$ if  $\sum_{j=1}^Nc_j>0$, but $c_k<0$ for some $k$.  The idea is to find a neighboring index set $ S_k$ such that the local average 
$$
\bar c_k= \frac{1}{|S_k|} \sum_{j\in S_k} c_j>0, 
$$
where $|S_k|$ denotes  the minimum number of indexes for which $c_j\not=0 \; \text{and} \;  \bar c_k>0$,  then use this  as a reference to define the following scaling limiter,
\begin{equation}\label{RC}
\tilde{c}_{j}=\theta c_{j}+(1-\theta) \bar{c}_k, \quad j\in S_k,
\end{equation}
where
\begin{equation*}\label{TH}
\theta=\min\left\{1,  \frac{\bar{c}_k}{\bar{c}_k-c_{min}}\right\}, \quad c_{min}=\min_{j\in S_k} c_{j}.
\end{equation*}

\begin{lem} This limiter has the following properties: \\
(1) $\tilde{c}_{j}\geq 0$ for all $j\in S_k$, \\
(2) $\sum_{j\in S_k }\tilde{c}_{j}=\sum_{j\in S_k} c_{j}$, and \\
(3) $|\tilde{c}_j-c_j|\leq |S_k| (-\min_{j\in S_k} c_{j})$.
\end{lem}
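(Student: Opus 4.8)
The plan is to treat the three claims in order, exploiting that $\tilde c_j$ is by construction an affine interpolation between $c_j$ and the positive local average $\bar c_k$, so that $\tilde c_j-c_j=(1-\theta)(\bar c_k-c_j)$. Throughout I would work in the nontrivial regime where $c_{min}=\min_{j\in S_k}c_j<0$; otherwise $\theta=1$, $\tilde c_j=c_j$, and all three statements are immediate. In that regime $\bar c_k-c_{min}>\bar c_k>0$, hence $\theta=\bar c_k/(\bar c_k-c_{min})\in(0,1)$ and $1-\theta=(-c_{min})/(\bar c_k-c_{min})$, which are the two explicit values I would carry through the computation.

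For (1), I would observe that since $\theta\ge 0$ and both $\theta$ and $\bar c_k$ are common to all indices, the assignment $c\mapsto \theta c+(1-\theta)\bar c_k$ is nondecreasing, so $\tilde c_j\ge \tilde c_{min}$ for every $j\in S_k$. It then suffices to evaluate at the minimal index: substituting $c_{min}$ and the explicit $\theta$ gives $\tilde c_{min}=\theta c_{min}+(1-\theta)\bar c_k=0$ after the two terms cancel, which yields $\tilde c_j\ge 0$. For (2), I would sum the definition over $S_k$ and use $\sum_{j\in S_k}\bar c_k=|S_k|\,\bar c_k=\sum_{j\in S_k}c_j$; the $\theta$ and $1-\theta$ contributions recombine into $\sum_{j\in S_k}c_j$, giving exact conservation independently of the value of $\theta$.

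The main work, and the step I expect to be the obstacle, is the stability bound (3). I would begin from $|\tilde c_j-c_j|=(1-\theta)|\bar c_k-c_j|=\frac{-c_{min}}{\bar c_k-c_{min}}\,|\bar c_k-c_j|$ and reduce the estimate to controlling $|\bar c_k-c_j|$. Since $c_j$ and $\bar c_k$ both lie in the interval $[\,c_{min},\,\max_{i\in S_k}c_i\,]$, one has $|\bar c_k-c_j|\le \max_{i\in S_k}c_i-c_{min}$. The key inequality is $\max_{i\in S_k}c_i-c_{min}\le |S_k|(\bar c_k-c_{min})$, which I would obtain from the averaging identity $\sum_{i\in S_k}(c_i-c_{min})=|S_k|(\bar c_k-c_{min})$: every summand is nonnegative, so no single term, in particular $\max_{i\in S_k}c_i-c_{min}$, can exceed the full sum. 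Combining these two bounds makes the factor $\bar c_k-c_{min}$ cancel, leaving $|\tilde c_j-c_j|\le |S_k|(-c_{min})$, exactly the claimed estimate.

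The delicate point is precisely this last cancellation: the bound is clean and independent of $\bar c_k$ only because the averaging identity supplies a factor $(\bar c_k-c_{min})$ that annihilates the denominator in $1-\theta$. I would therefore be careful to verify that the summands $c_i-c_{min}$ are genuinely nonnegative (immediate from the definition of $c_{min}$) and that $|\bar c_k-c_j|$ is bounded by the full range $\max_i c_i-c_{min}$ rather than by a one-sided deviation; once both are in place the three properties follow with no further estimation.
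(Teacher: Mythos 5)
Your proof is correct, and for parts (1) and (2) it coincides with the paper's argument (the paper simply states (1) follows from the definition of $\theta$; your observation that $\tilde c_{min}=\theta c_{min}+(1-\theta)\bar c_k=0$ makes the same point explicit). For part (3) you take a slightly different, and arguably cleaner, route: you bound $|\bar c_k-c_j|$ uniformly by the full range $c_{max}-c_{min}$ and then invoke the identity $\sum_{i\in S_k}(c_i-c_{min})=|S_k|(\bar c_k-c_{min})$, in which every summand is nonnegative, to get $c_{max}-c_{min}\leq |S_k|(\bar c_k-c_{min})$. The paper instead splits according to whether $c_j$ lies above or below $\bar c_k$, arrives at the bound $(-c_{min})\max\bigl\{1,\tfrac{c_{max}-\bar c_k}{\bar c_k-c_{min}}\bigr\}$, and controls the second entry via the balance of positive and negative deviations $\sum_{j\in S_k^+}(c_j-\bar c_k)=\sum_{j\in S_k^-}(\bar c_k-c_j)$ together with $|S_k|\geq 1$. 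Both arguments rest on the same mechanism — the averaging identity over $S_k$ produces a factor $(\bar c_k-c_{min})$ that cancels the denominator of $1-\theta$ — but your version avoids the case split and the $\max$, at no cost in the final constant. No gaps.
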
 
\begin{proof}
(1) This follows from the definition of $\theta$ and (\ref{RC}).

(2) By (\ref{RC}) and the definition of $\bar{c}_k$, it follows that 
\begin{align*}
\sum_{j\in S_k}\tilde{c}_{j}=  \theta |S_k| \bar c_k + (1-\theta) \bar{c}_k |S_k|= \sum_{j\in S_k}c_{j}.       
\end{align*}
(3)   From (\ref{RC}) it follows that for all $j\in S_k$, 
\begin{equation*}\label{AC1}
\begin{aligned}
|\tilde{c}_j-c_j| & =(1-\theta)|\bar{c}_k-c_j|   = -c_{min} \frac{|\bar{c}_k-c_j|}{(\bar{c}_k-c_{min})}\\
& \leq (-c_{min}) \max \left\{ 1,\frac{c_{max}-\bar{c}_k}{\bar{c}_k-c_{min}}\right\},
\end{aligned}
\end{equation*}
where  $c_{max}:=\max_{j\in S_k}c_{j}$ and $c_{min}:=\min_{j\in S_k}c_{j}$. 
 Note that $\sum_{j\in {S_k}}(\bar{c}_k-c_j)=0$ implies  
$$
\sum_{j\in {S_k^+}}(c_j-\bar{c}_k)= \sum_{j\in {S_k^-}}(\bar{c}_k-c_j),
 $$
 in which each term involved on both sides is nonnegative.  Hence, 
 $
 c_{max}-\bar{c}_k\leq |{S_k}|(\bar{c}_k-c_{min}).
 $
 Obviously, $|S_k|\geq 1$. Hence the claimed bound follows.  
\end{proof}
\begin{rem}  In general,  $|S_k|$ may not be bounded.  For instance, we let 
$$
c_j=\frac{1}{2^j} \text{ for } j=1,\cdots, N-1, \text{ and } c_{N}=-\frac{1}{2},
$$
then $\sum_{j=1}^Nc_j=\frac{1}{2}-\frac{1}{2^{N-1}}>0$, but $\sum_{j=2}^Nc_j=-\frac{1}{2^{N-1}}<0$. This implies that  $|S_N|=N$ since  $S_N=\{1,\cdots, N\}.$
\end{rem}
The above limiter when applied to $\{\rho_j\}$ with $c_j=h_j\rho_j$ 
gives
\begin{align}\label{tcj}
\tilde \rho_j= \theta \rho_j +(1-\theta) \frac{\bar c_k}{ h_j}, 
\end{align}
where
$$
\theta=\min \left\{1, \ \frac{\bar{c}_k}{\bar{c}_k-c_{min}} \right\}, \quad c_{min}=\min_{j\in S_k}h_j\rho_{j}, \quad \bar c_k =\frac{1}{|S_k|}\sum_{j\in S_k}h_j\rho_j .
$$
Such limiter still respects the local mass conservation. In addition, for any sequence $g_j$ with $g_j\geq 0$, 
 we have 
$$
|\tilde \rho_j -g_j|\leq (1+|S_k|\alpha)\max_{j \in S_k}|\rho_j -g_j| , \quad j\in S_k,
$$
where $\alpha$ is the upper bound of mesh ratio $h_i/h_j$.  Let $\rho_j$ be the approximation of $\rho(x)\geq 0$, we let $g_j=\rho(x_j)$ or the average of $\rho$ on $I_j$, so we can assert that the accuracy is not destroyed by the limiter as long as $|S_k|\alpha$ is uniformly bounded. 
In practice, it is indeed the case as verified by our numerical tests when using shape-regular meshes. 


Indeed,  the boundedness of $|S_k|$ can be proved rigorously for shape-regular meshes.
\begin{thm}\label{thm0}
Let $\rho(x) \geq 0$, be in $C^2(\Omega)$, and $\{\rho_j\}$ be an approximation of $\rho(x)$ such that 
$
|\rho_j-\rho(x_j)|\leq C h^2,
$
where $h=\min_{1\leq j\leq N} h_j$ and $h_j\leq \alpha h$ for some $\alpha>0$.  If $\rho_k<0$ (or only finite number of neighboring values are negative), then there exists $K^*>0$ finite such that 
$$
|S_k|\leq K^*.
$$
where $K^*$ may depend on the local meshes associated with $S_k$.  
\end{thm}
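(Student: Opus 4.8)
The plan is to exhibit a \emph{contiguous} set $S_k$ of bounded size, obtained by enlarging the negative cluster outward, whose local average is already positive; since $|S_k|$ is defined as the \emph{minimal} size for which $\bar c_k>0$, any such explicit set furnishes the desired upper bound $K^*$. Recall that with $c_j=h_j\rho_j$ positivity of the average is equivalent to positivity of the sum $\sum_{j\in S_k}h_j\rho_j$, so it suffices to control that sum.

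First I would translate the sign condition into information about the exact solution. Since $\rho(x_j)\ge 0$ and $|\rho_j-\rho(x_j)|\le Ch^2$, any index with $\rho_j<0$ satisfies $0\le\rho(x_j)\le Ch^2$ and $|\rho_j|\le Ch^2$; hence $x_k$ lies within $O(h)$ of a zero $x_0$ of $\rho$, which (as $\rho\ge 0$) is an interior minimum with $\rho(x_0)=0$, $\rho'(x_0)=0$, $\rho''(x_0)\ge 0$. The finitely many negative indices (say $L$ of them) cluster about $x_0$, and each contributes $|h_j\rho_j|\le C\alpha h^3$. The bookkeeping device is to compare the discrete cell mass with the exact one: the accuracy hypothesis together with the midpoint quadrature error gives a uniform constant $\beta=\beta(C,\alpha,\|\rho''\|_\infty)$ with $|h_j\rho_j-\int_{I_j}\rho\,dx|\le\beta h^3$, while $\int_{I_j}\rho\,dx\ge 0$ for every cell. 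A further consequence of $0\le\rho(x_j)\le Ch^2$ near a nondegenerate minimum is $(x_j-x_0)^2\le 2Ch^2/\lambda$, so $L$ itself is $O(1)$.

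Next I would capture the positive mass. Assuming the minimum is nondegenerate, Taylor's theorem gives $\lambda>0$ and a neighborhood of $x_0$ on which $\rho(x)\ge\tfrac{\lambda}{2}(x-x_0)^2$; integrating over a cell in this neighborhood yields
\[
\int_{I_j}\rho\,dx\ \ge\ \frac{\lambda}{6}\Big[(x_{j+1/2}-x_0)^3-(x_{j-1/2}-x_0)^3\Big],
\]
so that, summing over the $K$ cells added outward from the cluster edge $x_-$ out to $x_+=x_{K+1/2}$, the right-hand side telescopes to $\tfrac{\lambda}{6}\big[(x_+-x_0)^3-(x_--x_0)^3\big]$. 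With $h\le h_j\le\alpha h$ this positive contribution is of order $\lambda K^3h^3$, whereas the total discretization error accumulated over the $L+K$ cells of $S_k$ is at most $(L+K)\beta h^3$. Combining, $\sum_{j\in S_k}h_j\rho_j\ge\tfrac{\lambda}{6}(x_+-x_0)^3-(L+K)\beta h^3$, and positivity is guaranteed once $\tfrac{\lambda}{6}K^3$ exceeds a fixed constant multiple of $(L+K)\beta\alpha^3$. The common factor $h^3$ cancels, so this holds for all $K$ beyond a threshold $K^*$ of order $L+\alpha^{3/2}\sqrt{\beta/\lambda}$, which is finite and independent of $h$, depending only on the mesh ratio $\alpha$ and on local bounds for $\rho$ and $\rho''$. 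As the minimal positive-average set cannot exceed this explicit one, $|S_k|\le K^*$.

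The main obstacle is exactly the balance in the last step: one must collect positive mass growing cubically in the number of added cells against an error accumulating only linearly, and this hinges on the quadratic lower bound for $\rho$ near $x_0$, available only when $\rho''(x_0)>0$. The genuinely delicate case is a flat (higher-order) zero of $\rho$, where the growth is slower and the exponent $2$ must be replaced by the order of vanishing, with $K^*$ deteriorating accordingly. A secondary technical point is the geometry when the cluster abuts the domain boundary or when $\rho$ increases on only one side of $x_0$: then $S_k$ is enlarged on the admissible side alone, but the same cubic-versus-linear comparison still closes the estimate.
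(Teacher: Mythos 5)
Your skeleton is the same as the paper's: grow a contiguous set outward from the negative cluster, compare each discrete cell mass $h_j\rho_j$ with the exact mass $\int_{I_j}\rho\,dx$ up to a per-cell error of size $\beta h^3$ (your $\beta$ matches the paper's $\lambda+C$ bookkeeping via $\rho_j\ge\rho(x_j)-Ch^2$ and $\rho(x_j)\ge\bar\rho_j-\tfrac{1}{24}\|\rho''\|_\infty h_j^2$), and then win by showing the collected positive mass beats the accumulated error. Where you diverge is in how the positive mass is bounded below, and this is where your argument has a genuine gap relative to the theorem as stated. You Taylor-expand at the touching point and use $\rho(x)\ge\tfrac{\lambda}{2}(x-x_0)^2$ with $\lambda=\rho''(x_0)>0$, which yields the cubic-versus-linear comparison and an $h$-independent $K^*$. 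But the theorem assumes only $\rho\in C^2$, $\rho\ge 0$; it does not assume the zero is nondegenerate, and you yourself concede that for a flat zero "the exponent $2$ must be replaced by the order of vanishing" without carrying that case out. Since the conclusion of the theorem explicitly allows $K^*$ to depend on the local meshes, the degenerate case is part of what must be proved, not an optional refinement, so as a proof of the stated result the argument is incomplete.

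The paper closes exactly this hole by never Taylor-expanding: it writes the collected mass as $\eta\int_0^1\rho(\theta\eta+x_{1/2})\,d\theta$ with $\eta=\sum_{j=1}^K h_j$, bounds the accumulated error by $(\lambda+C)\alpha^2\eta^3/K^2$ using $Kh\le\eta$, and reduces positivity to $K>A(\eta)$ where $A(z)=\alpha\sqrt{\lambda+C}\,z\big/\sqrt{\int_0^1\rho(\theta z+x_{1/2})\,d\theta}$. The denominator is bounded below for $z\in[h_1,b-a]$ by $\sqrt{\tfrac{1}{z}\int_{I_1}\rho\,dx}>0$ regardless of the order of vanishing of $\rho$, so $\max_z A(z)$ is finite; the price is that $K^*$ depends on $h_1$ and on $\int_{I_1}\rho\,dx$, which is precisely the mesh dependence the statement tolerates. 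Your route does buy something the paper's does not --- a $K^*$ independent of $h$, controlled only by $\alpha$, $C$, $\|\rho''\|_\infty$ and $\min\rho''$ near the zero --- but only under the extra hypothesis $\rho''(x_0)>0$. To repair your proof without that hypothesis, replace the pointwise quadratic lower bound by the paper's integral-average bound (or, equivalently, lower-bound the telescoped mass by $\int_{I_1}\rho\,dx>0$ alone and accept a mesh-dependent threshold).
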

\begin{proof} Under the assumption $\rho_k<0$, $\rho$ must touch zero near $x_k$.  We discuss the case where $\rho(x^*)=0$ and $\rho'(x^*)=0$ with $\rho(x)>0$ for $x>x^*$ locally with $x^*\in I_k$. The case where $\rho(x)>0$ for $x<x^*$ can be handled as well.  Without loss of generality, we consider $k=1$ with $x^*\in I_1$, and 
$\int_{I_1}\rho(x)dx >0$. It suffices to find $K$ such that 
 \begin{equation}\label{L0}
\sum_{j=1}^Kh_j \rho_j>0.
\end{equation}
Using the error bound we have 
$$
\rho_j \geq \rho(x_j)-Ch^2. 
$$
Also from $\rho \in C^2$ we can deduce that 
$$
\rho(x_j)\geq \bar \rho_j -\lambda h_j^2,
$$
with $\lambda=\frac{1}{24}\max_{x\in \Omega}|\rho''|$ and the cell average $\bar \rho_j=\frac{1}{h_j} \int_{I_j}\rho(x)dx$.   Combining these we see that the left hand side of (\ref{L0}) is bounded from below by 
\begin{align*}
\sum_{j=1}^Kh_j\rho_j \geq & \sum_{j=1}^K h_j(\bar \rho_j -C h^2 -\lambda h_j^2)\\
\geq & \int_{x_{1/2}}^{x_{K+1/2}}\rho(x)dx - (\lambda+C) \sum_{j=1}^Kh_j^3 \\
\geq & \int_{x_{1/2}}^{x_{K+1/2}}\rho (x)dx- (\lambda+C) {h}^2\alpha^2 \sum_{j=1}^Kh_j\\
=& \left[ \int_0^1 \rho\left(\theta \eta +x_{1/2}\right)d\theta -(\lambda+C) {h}^2\alpha^2 \right]\eta,
\end{align*}
where $\eta:=\sum_{j=1}^Kh_j$, and we have used $h_j \leq h\alpha$. Using the fact $Kh \leq \eta $, the term in the bracket is bounded below by  
$$
\int_0^1 \rho\left(\theta \eta +x_{1/2}\right)d\theta -(\lambda+C) \eta^2 \alpha^2/K^2,
$$  
which is positive if 
$$
K> \frac{\alpha \sqrt{\lambda +C}\eta}{\sqrt{\int_0^1  \rho\left(\theta \eta +x_{1/2}\right)d\theta}}.
$$
This can be ensured if we take 
$$
K=\lfloor A \rfloor+1, 
$$
where for $\Omega=[a, b]$, 
$$
A=\max_{z\in [h_1, b-a]}\frac{\alpha \sqrt{\lambda +C}z}{\sqrt{\int_0^1  \rho\left(\theta z +x_{1/2}\right)d\theta}}
$$
which is bounded and depends on $h_1$. For general cases a different bound can be identified and it may depend 
on local meshes. 
\end{proof}
Note that our numerical solutions feature the following property:  if $\rho_j^n=0$, then $\rho_j^{n+1}=2\rho^*_j-\rho_j^n\geq 0$ due to the fact that $\rho_j^*\geq 0$ for all $j=1, \cdots, N$. This means that if $\rho_0(x)=0$ 
on an interval, then $\rho_j^1$ cannot be negative in most of nearby cells.  Thus negative values appear only where the exact solution turns from zero to a positive value, and the number of these values are finitely many. Our result in Theorem \ref{thm0} is thus applicable.  \\

\noindent {\bf Algorithm.} We have the following algorithm:
\begin{enumerate}
\item Initialization: From initial data $\rho_0(x)$, obtain $\rho^0_{j}=\frac{1}{h_j}\int_{I_j}\rho_0(x)dx, \ j=1,\cdots,N,$ by using a second order quadrature.
\item Update to get $\{\rho^1_{j}\}$ by the first order scheme (\ref{fully}).
\item Marching from $\{\rho^n_{j}\}$ to $\{\rho_{j}^{n+1}\}$ for $n=1, 2, \cdots,$ based on (\ref{PC1}).
\item Reconstruction: if necessary, locally replace $ \rho_{j}^{n+1}$ by $\tilde \rho_{j}^{n+1}$ using the limiter defined in (\ref{tcj}). 
\end{enumerate}

The following algorithm can be called to find an admissible set $S_k$  used in (\ref{tcj}).
\begin{enumerate}
\item[(i)] Start with $S_k=\{k\}$, $m=1$.
\item[(ii)]  If $k-m \geq  1$ and $c_{k-m}\ne 0$, then set $S_k=S_k\cup \left\{k-m\right\}$. \\If $\bar{c}_k>0$, then stop, else go to (iii).
\item[(iii)] If $k+m\leq N$ and $c_{k+m}\ne 0$, then set $S_k=S_k\cup \{ k+m\}$.\\
 If $\bar{c}_k>0$, then stop, else set $m=m+1$ and go to (ii).
\end{enumerate}

\subsection{Second order scheme for 2D problem}
A similar two step time-discretization technique can be applied to higher dimensional problems. 
In the 2D case, that with scheme (\ref{Nosemi2}) gives the following fully discrete scheme,
\begin{subequations}\label{PC2}
\begin{align}
\label{PR+2}
\frac{\rho^* _{i,j}-\rho^n _{i,j}}{\tau/2}&=\frac{C^*_{i+1/2,j}-C^*_{i-1/2,j}}{h^x_i}+\frac{C^*_{i,j+1/2}-C^*_{i,j-1/2}}{h^y_j},\\
\label{CR+2}
\rho^{n+1}_{i,j}&=2\rho^*_{i,j}-\rho^n_{i,j},
\end{align}
\end{subequations}
where 
\begin{equation*}\label{Cf2}
\begin{aligned}
& C^*_{i+1/2,j}=\frac{M^*_{i+1/2,j}}{h^x_{i+1/2}}\bigg(\frac{\rho^*_{i+1,j}}{M^*_{i+1,j}}-    \frac{\rho^*_{i,j}}{M^*_{i,j}}   \bigg), \quad i=1,\cdots,N_x-1, j=1,\cdots,N_y,\\
& C^*_{i,j+1/2}=\frac{M^*_{i,j+1/2}}{h^y_{j+1/2}}\bigg(\frac{\rho^*_{i,j+1}}{M^*_{i,j+1}}-    \frac{\rho^*_{i,j}}{M^*_{i,j}}   \bigg), \quad i=1,\cdots,N_x, j=1,\cdots,N_y-1,\\
& C^*_{1/2,j}=C^*_{N_x+1/2,j}=C^*_{i,1/2}=C^*_{i,N_y+1/2}=0, \quad i=1,\cdots,N_x, j=1,\cdots,N_y,
\end{aligned}
\end{equation*}
with $M^{*}_{i+1/2,j}=Q_2(x_{i+1/2}, y_{j},\frac{3}{2}\rho^n-\frac{1}{2}\rho^{n-1} )$, ${M^*}_{i,j+1/2}=Q_2(x_{i}, y_{j+1/2},\frac{3}{2}\rho^n-\frac{1}{2}\rho^{n-1} )$, and $M^*_{i,j}=Q_2(x_{i}, y_{j}, \frac{3}{2}\rho^n-\frac{1}{2}\rho^{n-1} )$. 
In an entirely similar fashion (details are therefore omitted), we can prove the following.
\begin{thm}
The fully discrete scheme (\ref{PC2}) has the following properties:\\
(1) Conservation of mass:
$$
 \sum_{i=1}^{N_x}\sum_{j=1}^{N_y}h^x_ih^y_j\rho_{i,j}^{n}=\int_{\Omega} \rho_0(x,y) dx dy, \text{ for } n\geq 1.
$$
(2) Positivity preserving: if $\rho_{i,j}^n\geq 0$ for all $i\in\{1,\cdots,N_x\}$ and $j\in \{1,\cdots,N_y\}$, then 
$$
\rho_{i,j}^{n+1}\geq 0,
$$
provided $\tau$ is sufficiently small.
\end{thm}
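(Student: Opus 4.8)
The plan is to reproduce, dimension by dimension, the two arguments already used for the first-order 2D scheme (\ref{fully2equal}) and for the 1D second-order scheme (\ref{G}). Throughout, the key structural feature is that $M^*$ is frozen at the known extrapolation $\tfrac32\rho^n-\tfrac12\rho^{n-1}$, so the predictor step (\ref{PR+2}) is a genuine linear system with data coefficients.

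For (1), I would multiply the predictor step (\ref{PR+2}) by $h^x_ih^y_j$ and sum over all $(i,j)$. The discrete fluxes telescope and vanish on $\partial\Omega$ by the zero-flux convention $C^*_{1/2,j}=C^*_{N_x+1/2,j}=C^*_{i,1/2}=C^*_{i,N_y+1/2}=0$, so that $\sum_{i,j}h^x_ih^y_j\rho^*_{i,j}=\sum_{i,j}h^x_ih^y_j\rho^n_{i,j}$. Summing the corrector (\ref{CR+2}) against $h^x_ih^y_j$ then gives $\sum_{i,j}h^x_ih^y_j\rho^{n+1}_{i,j}=2\sum_{i,j}h^x_ih^y_j\rho^*_{i,j}-\sum_{i,j}h^x_ih^y_j\rho^n_{i,j}=\sum_{i,j}h^x_ih^y_j\rho^n_{i,j}$, and induction together with the choice of initial data (\ref{initial2}) yields the stated conservation.

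For (2), I would eliminate the intermediate stage by inserting $\rho^*_{i,j}=\tfrac12(\rho^{n+1}_{i,j}+\rho^n_{i,j})$ into (\ref{PR+2}) and pass to the scaled unknowns $G^{n+1}_{i,j}=\rho^{n+1}_{i,j}/M^*_{i,j}$, $G^n_{i,j}=\rho^n_{i,j}/M^*_{i,j}$. Regrouping exactly as in the 1D system (\ref{G}) produces a linear system $A\,G^{n+1}=b$ in which, writing $\tilde M^*_{i+1/2,j}=\frac{h^y_j}{h^x_{i+1/2}}M^*_{i+1/2,j}$, $\tilde M^*_{i,j+1/2}=\frac{h^x_i}{h^y_{j+1/2}}M^*_{i,j+1/2}$ and $\Sigma_{i,j}=\tilde M^*_{i+1/2,j}+\tilde M^*_{i-1/2,j}+\tilde M^*_{i,j+1/2}+\tilde M^*_{i,j-1/2}$, the diagonal entry equals $h^x_ih^y_jM^*_{i,j}+\tfrac{\tau}{2}\Sigma_{i,j}>0$, the off-diagonal entries are the nonpositive numbers $-\tfrac{\tau}{2}\tilde M^*_{i\pm1/2,j}$ and $-\tfrac{\tau}{2}\tilde M^*_{i,j\pm1/2}$, and
\[
b_{i,j}=\Big(h^x_ih^y_jM^*_{i,j}-\tfrac{\tau}{2}\Sigma_{i,j}\Big)G^n_{i,j}+\tfrac{\tau}{2}\big(\tilde M^*_{i+1/2,j}G^n_{i+1,j}+\tilde M^*_{i-1/2,j}G^n_{i-1,j}+\tilde M^*_{i,j+1/2}G^n_{i,j+1}+\tilde M^*_{i,j-1/2}G^n_{i,j-1}\big).
\]
Exactly as in (\ref{fully2equal}), $A$ is strictly diagonally dominant, hence a nonsingular M-matrix, so $A^{-1}\ge 0$ entrywise and $G^{n+1}\ge 0$ (equivalently $\rho^{n+1}=M^*G^{n+1}\ge 0$) as soon as every $b_{i,j}\ge 0$.

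Finally, using $G^n\ge 0$, the four neighbour contributions to $b_{i,j}$ are already nonnegative, so $b_{i,j}\ge 0$ is guaranteed once the coefficient of $G^n_{i,j}$ is nonnegative, i.e. once $\tfrac{\tau}{2}\Sigma_{i,j}\le h^x_ih^y_jM^*_{i,j}$ for every $(i,j)$. This provides the explicit threshold $\tau\le\tau^*:=\min_{i,j}\frac{2h^x_ih^y_jM^*_{i,j}}{\Sigma_{i,j}}$ (with the obvious modification of $\Sigma_{i,j}$ in boundary cells where some $\tilde M^*$ vanish), the 2D counterpart of the bound in the 1D theorem, completing (2). The only genuinely new feature relative to the first-order scheme is that the corrector $\rho^{n+1}=2\rho^*-\rho^n$ destroys the unconditional positivity enjoyed by the backward-Euler predictor; the main obstacle is therefore bookkeeping the exact $\tau$-dependence of $b_{i,j}$ through the wider four-neighbour 2D stencil to pin down $\tau^*$, while the telescoping, the diagonal dominance, and the M-matrix (equivalently min-index) argument of Theorem \ref{thm3.1} carry over verbatim.
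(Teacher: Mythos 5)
Your proposal is correct and follows essentially the same route the paper intends: the authors omit the details of this theorem precisely because it combines the telescoping/mass argument and the $G^{n+1}$ linear-system reformulation already used for the 1D second-order scheme (\ref{G}) with the four-neighbour stencil of the 2D first-order system (\ref{fully2equal}), which is exactly what you reproduce, including the requirement $b_{i,j}\ge 0$ and the resulting threshold $\tau^*=\min_{i,j} 2h^x_ih^y_jM^*_{i,j}/\Sigma_{i,j}$ with boundary modifications. Your M-matrix phrasing is an equivalent packaging of the paper's minimum-index argument, so nothing of substance differs.
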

\subsection{Local limiter and algorithm} If the time step $\tau$ is not small, positivity of $\rho_{i, j}^n$ is not guaranteed for $n\geq 2$.
We use the following limiter to resolve this issue: 
\begin{align}\label{tcj2}
\tilde \rho_{i,j}= \theta \rho_{i,j} +(1-\theta) \frac{\bar c_{k, l}}{ h^x_ih^y_j}, 
\end{align}
with
$$
\theta=\min\left\{1, \ \frac{\bar{c}_{k, l}}{\bar{c}_{k,l}-c_{min}} \right\}, \quad c_{min}=\min_{(i,j)\in S_{k,l}}h^x_ih^y_j\rho_{i,j}, \quad \bar c_{k, l}=\frac{1}{|S_{k,l}|}\sum_{{(i,j)}\in S_{k,l}}h^x_ih^y_j\rho_{i,j},
$$
where $S_{k,l}$ denotes the minimum number of indexes for which  $\rho_{i, j}\not=0$ and  $\bar c_{k, l}>0$.

The limiter (\ref{tcj2}) can be shown to be nonnegative and satisfy the local mass conservation. 
In addition, for any $g_{i, j} \geq 0$ we have 
$$
|\tilde \rho_{i,j} -g_{i, j}|\leq (1+|S_{k,l}| \alpha)\max_{(i,j) \in S_{k,l}}|\rho_{i,j}-g_{i, j}| , \quad (i,j)\in S_{k,l},
$$
where $\alpha$ is the upper bound of 2D mesh ratios. Hence the second order accuracy remains 
for shape-regular meshes since $|S_{k,l}|$ can be shown bounded as in the one-dimensional case. \\

\noindent{\bf Algorithm}
Our algorithm for 2D problem is given as follows:
\begin{enumerate}
\item Initialization: From initial data $\rho_0(x,y)$, obtain $\rho^0_{i,j}=\frac{1}{I_{i,j}}\int_{I_{i,j}}\rho_0(x,y)dxdy, \ i=1, \cdots, N_x,\ j=1,\cdots,N_y,$ by using a second order quadrature.
\item Update to get $\{\rho^1_{i,j}\}$ by the first order scheme (\ref{fully2}).
\item March from $\{\rho^n_{i,j}\}$ to $\{\rho_{i,j}^{n+1}\}$ based on the scheme (\ref{PC2}).
\item Reconstruction: if necessary, locally replace $\rho_{i,j}^{n+1}$ by $\tilde \rho_{i,j}^{n+1}$ using the limiter defined in (\ref{tcj2}). 
\end{enumerate}

The following algorithm can be called to find an admissible set $S_{k,l}$  used in (\ref{tcj2}).
\begin{enumerate}
\item[(i)] Start with $S_{k,l}=\{(k,l)\}$, $m=1$.
\item[(ii)]  For $d_y=\max \{1, l-m\}: \min \{l+m, N_y\}$ and  $d_x=\max \{1, k-m\}:\min \{k+m, N_x\}$,\\ 
If $(d_x,d_y)\notin S$ and $c_{k-m}\ne 0$, then set $S_{k,l}=S_{k,l}\cup \left\{ (d_x,d_y) \right\}$.\\
 If $\bar{c}_{k,l}>0$, then stop, else go to (iii).
\item[(iii)] Set $m=m+1$ and go to (ii).
\end{enumerate}
\section{Numerical Examples}
In this section, we implement the fully discrete schemes (\ref{fully}) and (\ref{fully2}) and second order extensions (\ref{PC1}) and (\ref{PC2}). Errors in 1-D case are measured in the following discrete norms:  
$$e_{l^1}=h\sum_{i=1}^N|\rho_i^n-\bar{\rho}_i^n|,$$
$$e_{l^{\infty}}=\max_{1\leq i\leq N}|\rho_i^n-\bar{\rho}_i^n|.$$
Here $\bar{\rho}_i^n$ is cell average of the exact solution on $I_i$ at time $t=n\tau.$

\subsection{One-dimensional tests}
\begin{example}\label{ex51}
 (Accuracy test) In this example we test the accuracy of scheme (\ref{fully}) and scheme (\ref{PC1}) Consider the initial value problem with source term
 \begin{equation}
 \left \{
\begin{array}{rl}
 \hfill  \partial_t \rho =&\partial_x(\partial_x \rho +  \rho \partial_x(V(x)+W*\rho))+F(x,t),  \hfill \ \ \   t>0, \ x\in [-\pi, \ \pi],\\
  \hfill \rho(x,0)=&2+\cos(x)   ,  \hfill \ \ \ \ x\in [-\pi, \ \pi],
\end{array}
\right.
\end{equation}
subject to zero flux boundary conditions.  Here we take $V(x)=\cos(x), W(x)=\cos(x),$  and 
$$
F(x,t)=\pi e^{-2t}(2\cos^2(x)+2\cos(x)-1)+e^{-t}(2\cos^2(x)+2\cos(x)-3).
$$ 
One can check that the exact solution to (\ref{ex51}) is 
$$
\rho(x,t)=e^{-t}(2+cos(x)).
$$ 
 We compute to $t=1$, first use time step $\tau=0.1h $ and $\tau=h^2$ to check accuracy of scheme (\ref{fully}),  then use $\tau=h$ to check accuracy of scheme (\ref{PC1}), results are reported in Table 1 and Table 2 respectively. We see that the scheme (\ref{fully}) is first order accurate in time and second order accurate in space, while the scheme  (\ref{PC1}) is second order accurate both in time and space.

Note that the exact solution is $\rho(x,t)=e^{-t}(2+cos(x))$, which is far above $0$ for $t\in [0, 1]$. Hence the positivity-preserving limiter is not activated in this test.

\begin{table}[ht]\label{T1}
        \centering
                \caption{Accuracy of scheme (\ref{fully}) with $\tau=0.1h$ and $\tau=h^2$ .}
\begin{tabular}{|l|l|l|l|l|l|l|l|l|}
\hline
 & \multicolumn{4}{l|}{errors and orders with $\tau=0.1h$} & \multicolumn{4}{l|}{errors and orders with $\tau=h^2$} \\ \hline
 N     &     $l^1$ error & order& $l^{\infty}$ error &order    &    $l^1$ error & order& $l^{\infty}$ error &order     \\ \hline
 40   &     0.70474E-01&-&0.26268E-01&-                       &     0.10451E-00&-&0.46075E-01&-    \\ 
80 &0.32212E-01&1.1295&0.15021E-01&0.8063           &    0.25847E-01&2.0156&0.11397E-01&2.0153    \\ 
 160 &0.15796E-01&1.0280&0.79593E-02&0.9163        &    0.64441E-02&2.0039&0.28433E-02&2.0030   \\ 
320 &0.78955E-02&1.0005&0.40881E-02&0.9612          &     0.16098E-02&2.0011&0.71027E-03&2.0011   \\ \hline
\end{tabular}
\end{table}
 
 \begin{table}[ht]\label{ex1h2}
        \centering
                \caption{Accuracy of scheme (\ref{PC1}) with $\tau=h$ .}
        \begin{tabular}{|c| c |c |c| c |c|}
            \hline
              N& \ $l^1$ error & order& $l^{\infty}$ error &order \\ [0.5ex] 
            \hline
            40 &0.14049E-00&-&0.43022E-01&-\\
            80 &0.35941E-01&1.9668&0.10729E-01&2.0036\\
            160 &0.90784E-02&1.9851&0.26805E-02&2.0009\\
            320 &0.22814E-02&1.9925&0.67108E-03&1.9980
             \\ [1ex]
            \hline
        \end{tabular}
     \end{table}
 \end{example}

\begin{example} \label{ex52}
In this example, we study dynamics of linear Fokker-Plank equations by considering the following problem  
\begin{equation}
\begin{array}{rl}
  \hfill \partial_t \rho =&\partial_x(\partial_x \rho +x \rho),  \hfill \ \ \   t>0, \ x\in [-5,\ 5], \\
  \end{array}
\end{equation}
with initial condition
\begin{equation}
\rho(x,0)=\left \{
\begin{array}{rl}
 &\frac{1}{7} \int_{\Omega}e^{\frac{-x^2}{2}}dx,  \hfill \ \ \   \ x\in [-3.5, \ 3.5], \\
& 0,  \hfill \ \ \   \ \text{otherwise},\\
\end{array}
\right.
\end{equation}
and zero flux boundary conditions $(\partial_x \rho +x \rho)|_{x=\pm 5}=0.$ 

This is (\ref{mainmodel11}) with $V(x)=\frac{x^2}{2}$ and $W(x)=0$. The steady state to (\ref{ex52}) is $\rho_{eq}(x)=e^{-\frac{x^2}{2}}.$  We use the time step $\tau =0.1$ to compute solutions up to $t=4$, with $N=200$. In Fig.1(a) are snap shots of $\rho$ at $t=0,\ 0.2,\ 0.5, \ 1, \ 4,$ and the steady state. 
Fig.1(b) shows the mass conservation and energy decay. We observe from this figure that the solution of problem (\ref{ex52}) becomes indistinguishable from the steady state after $t=2.$ 
Compared in Fig.2  are numerical solutions obtained by the second order scheme (\ref{PC1}) with and without the local limiter. We see that the limiter produces positive solutions and reduces solution oscillations. 

 \begin{figure}[!tbp]
\caption{First order scheme for Example \ref{ex52}. }
\centering  
\subfigure{\includegraphics[width=0.48\linewidth]{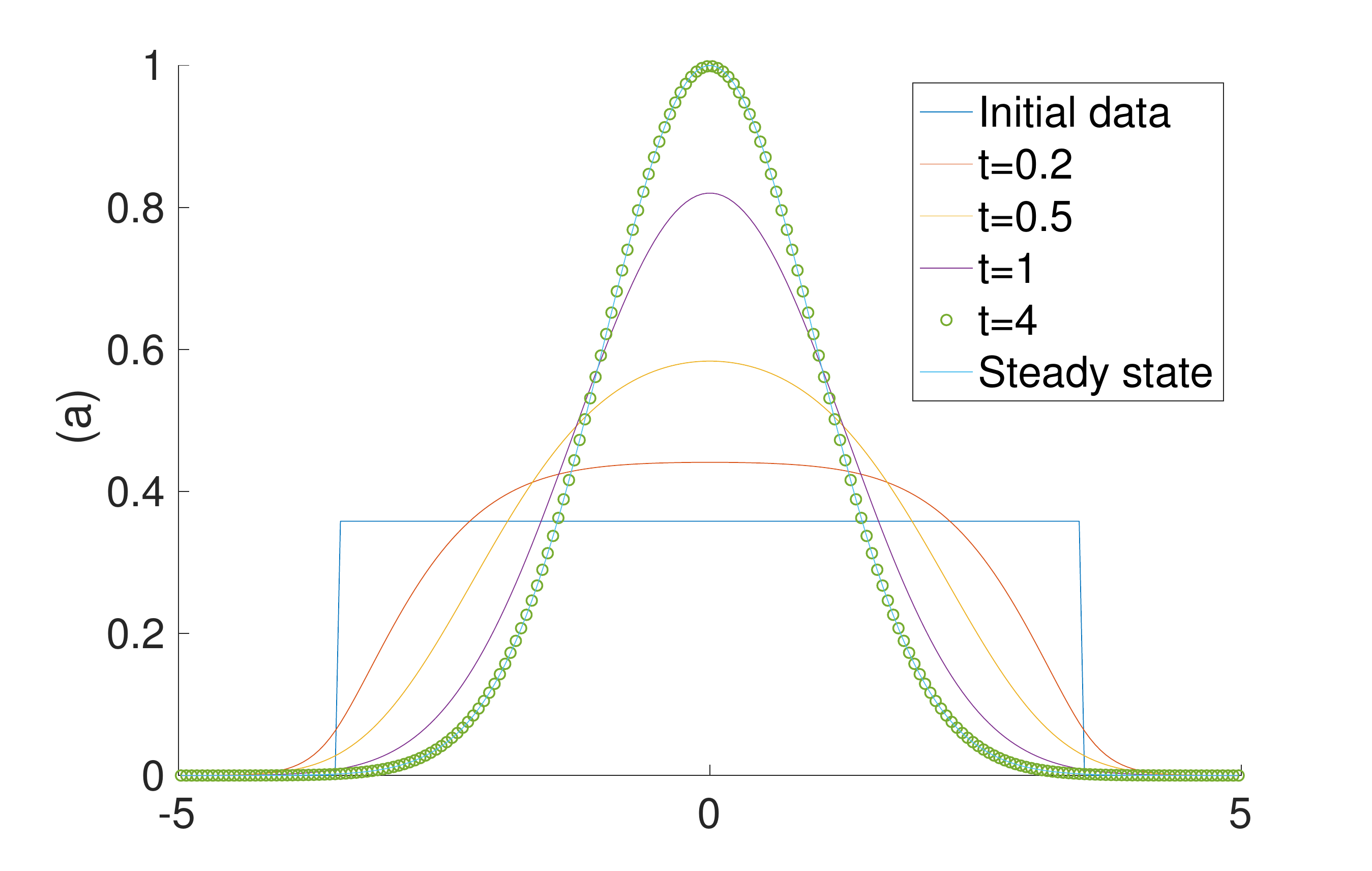}}
\subfigure{\includegraphics[width=0.48\linewidth]{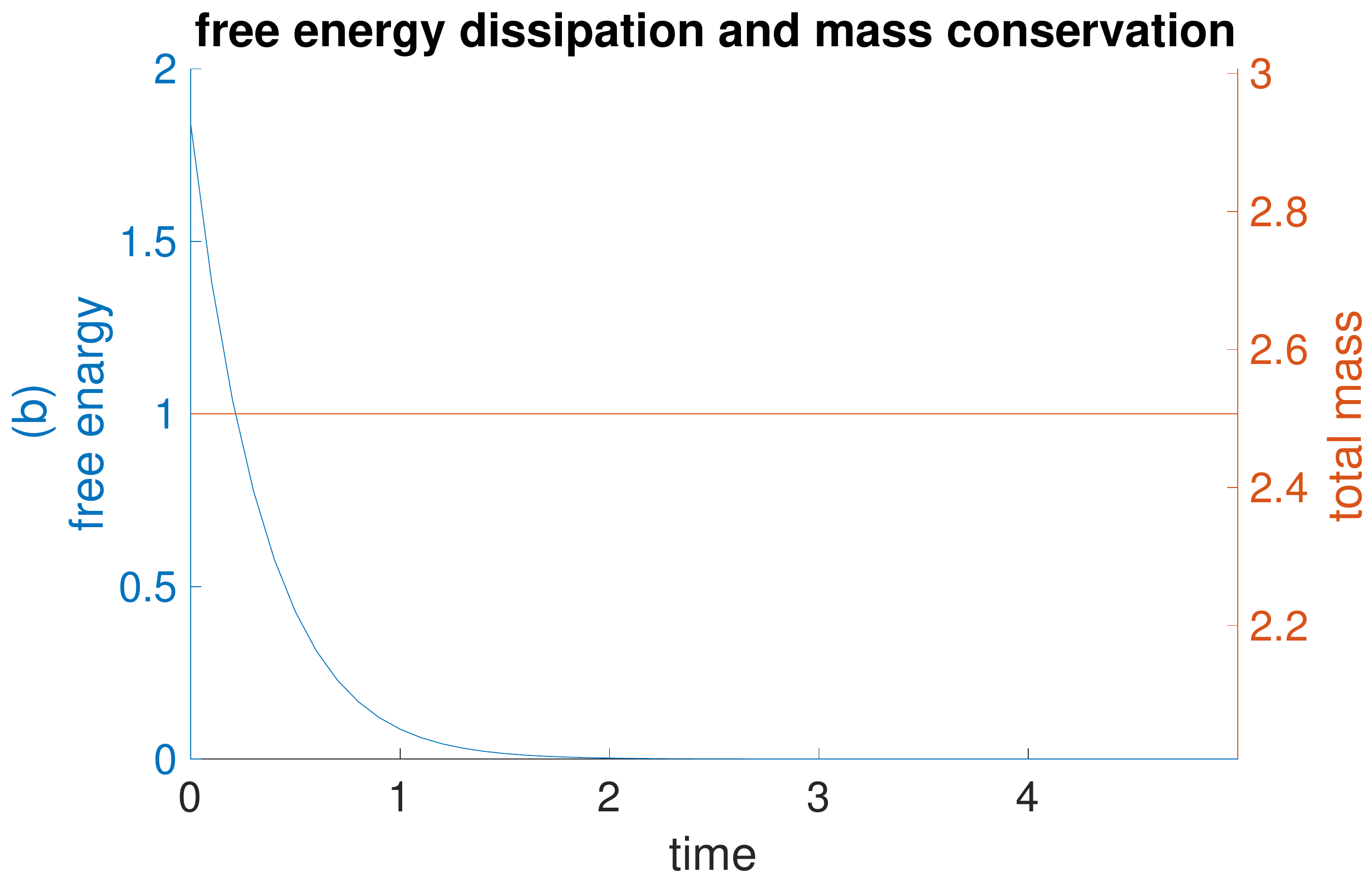}}
\end{figure}
 \begin{figure}[!tbp]
\caption{Second order scheme (with and without limiter) for Example \ref{ex52}. }
\centering  
\subfigure{\includegraphics[width=0.48\linewidth]{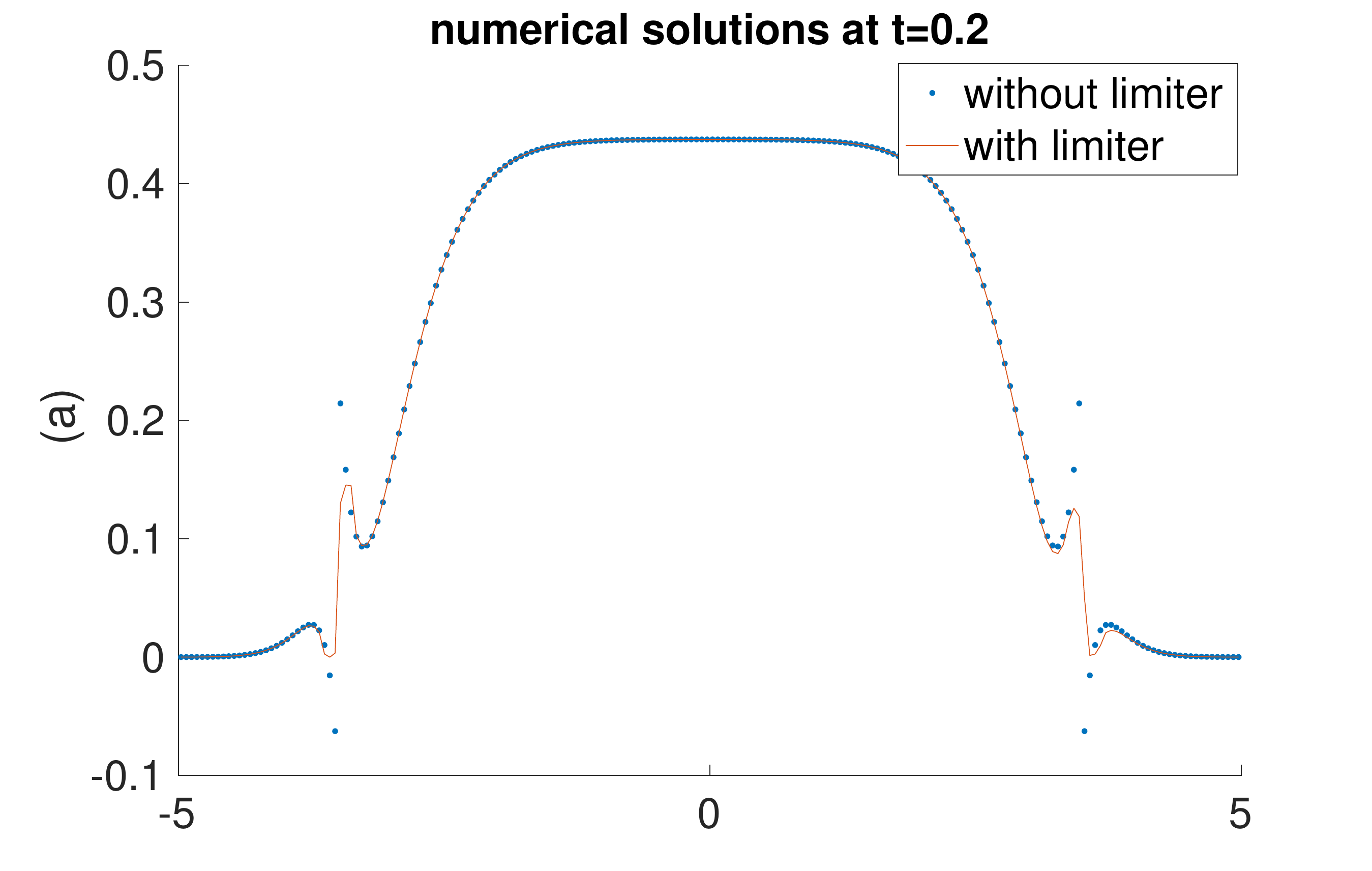}}
\subfigure{\includegraphics[width=0.48\linewidth]{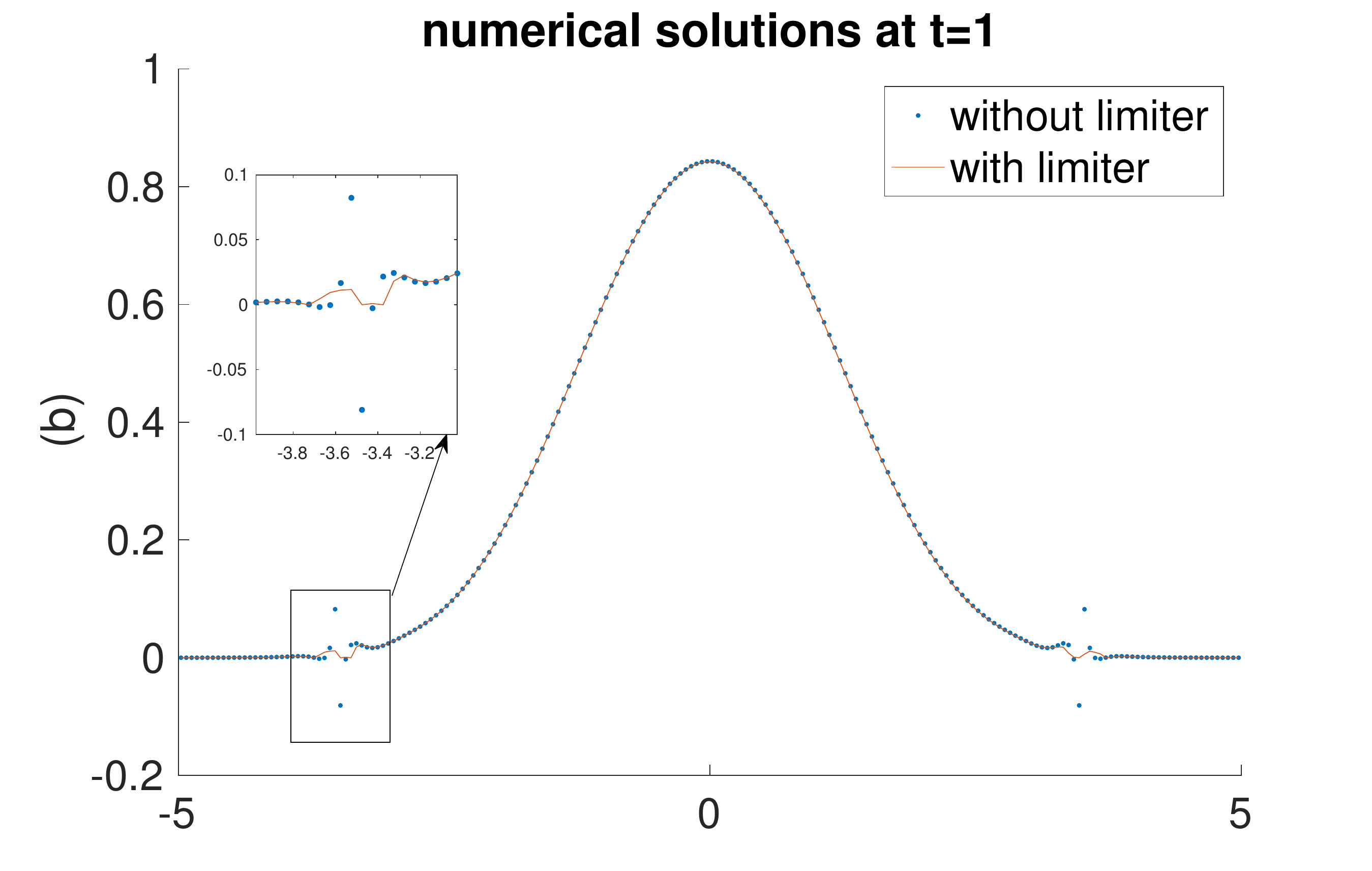}}
\end{figure}
\end{example}


\begin{example} \label{ex53}(Doi-Onsager equation with the Maier-Saupe potential) In this example, we consider the Doi-Onsager equation with Maier-Saupe potential
\begin{equation}\label{eq53}
\left \{
\begin{array}{rl}
  \hfill \partial_t \rho =&\partial_x(\partial_x\rho+  \alpha \rho\partial_x (W*\rho) )),  \hfill  \ \  W(x)=\sin^2(x) \ \   t>0, \ x\in  [0, 2\pi]\\
   \hfill \rho(x,0)=&\frac{x+1}{2\pi(\pi+1)}  ,  \hfill
 \end{array}
\right.
\end{equation}
subject to zero flux boundary conditions. Here $\alpha$ is the intensity parameter. Stationary solutions of (\ref{eq53}) have been an  interesting subject of study,  since when $\alpha$ increases, phase transition from isotropic state to nematic state will appear. A detailed characterization of solotions can be found in \cite{LiuZhang}:  for $0<\alpha\leq\alpha^*= 4$, the only stationary solution is the isotropic state  $\rho_{eq}(x)=\frac{1}{2\pi}.$ When $\alpha>\alpha^*$ besides the constant solution $\rho_{eq}(x)=\frac{1}{2\pi},$ there are other solutions given by
$$
\rho_{eq}(x)=\frac{e^{-\eta^*\cos2(x-x_0)}}{\int_0^{2\pi}e^{-\eta^*\cos(2x)}dx},
$$
where $x_0$ is arbitrary, $\eta^*>\frac{\alpha}{2}\sqrt{1-4/\alpha}$ is uniquely determined by
$$
\frac{\int_0^{2\pi}cos(2x)e^{-\eta^*\cos(2x)}dx}{\int_0^{2\pi}e^{-\eta^*\cos(2x)}dx}+\frac{2\eta}{\alpha}=0.
$$
We use scheme (\ref{fully}) and choose the time step $\tau=0.1$ to compute up to $T=30$ with $N=80.$ In Fig. 3(a) are snap shots of solutions to (\ref{eq53}) for $\alpha=3<\alpha^*$ at $t=0,\ 0.5,\ 5, \ 15, \ 25, \ 30$. Fig.3 (b) shows mass conservation and energy decay,  from which we can observe that the problem (\ref{eq53}) is already at steady state $\rho_{eq}(x)=\frac{1}{2\pi}$ after $t=20.$ In Fig. 4(a) are snap shots of solutions to (\ref{eq53}) for $\alpha=5>\alpha^*$ at $t=0,\ 0.5,\ 1, \ 5, \ 25, \ 35$. Fig.4 (b) shows mass conservation and energy decay, which tells that problem (\ref{eq53}) is at already steady state after $t=30.$  { Our method gives satisfying results for the problem, consistent with the numerical results obtained in \cite{taylor} by an explicit scheme with Euler forward time discretization. }

 \begin{figure}[!tbp]
\caption{Solution evolution and energy dissipation for Example (\ref{ex53}) with $\alpha=3.$ }
  \centering
       \subfigure{\includegraphics[width=0.5\linewidth]{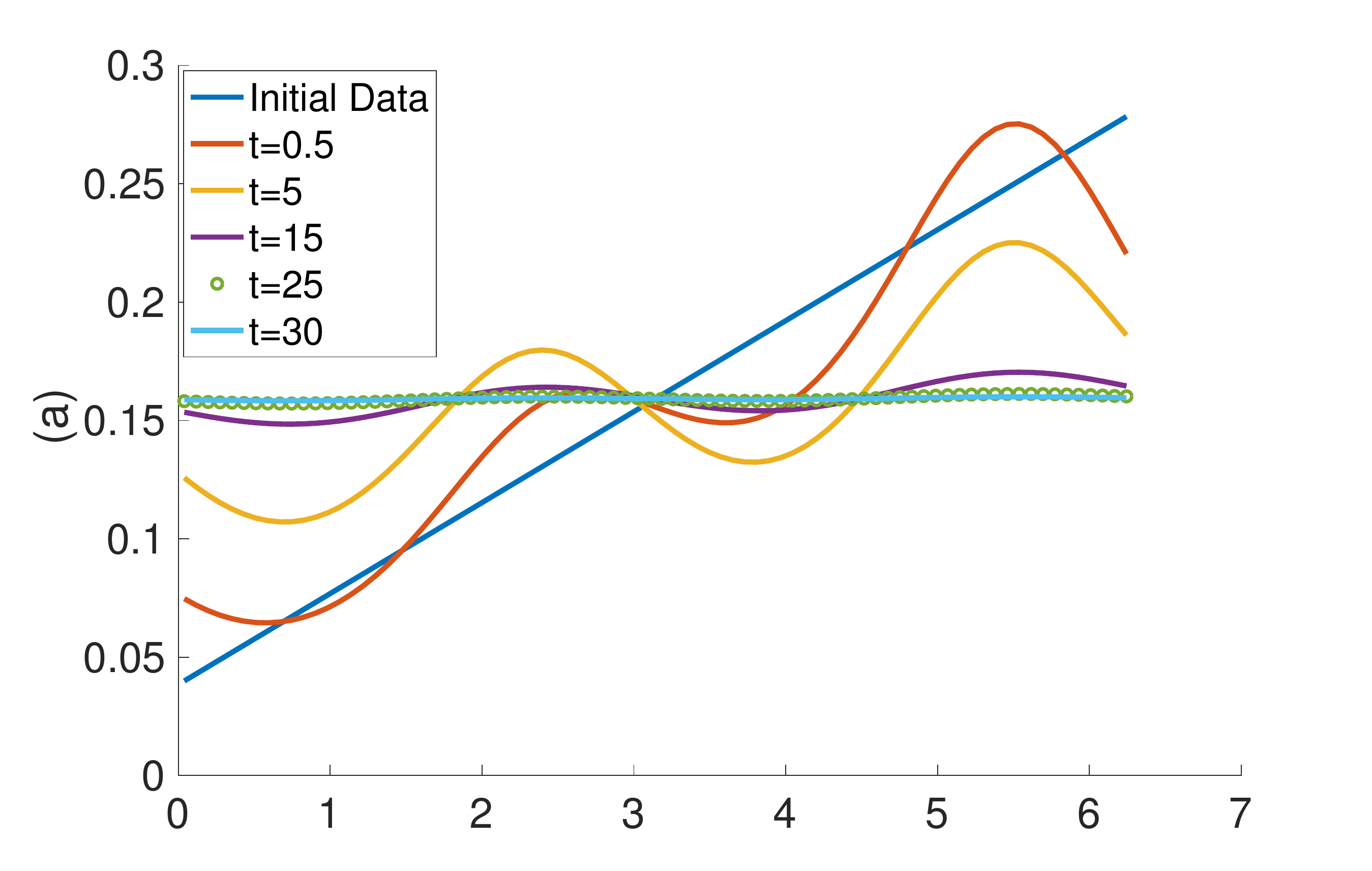}}
 \subfigure{\includegraphics[width=0.48\linewidth]{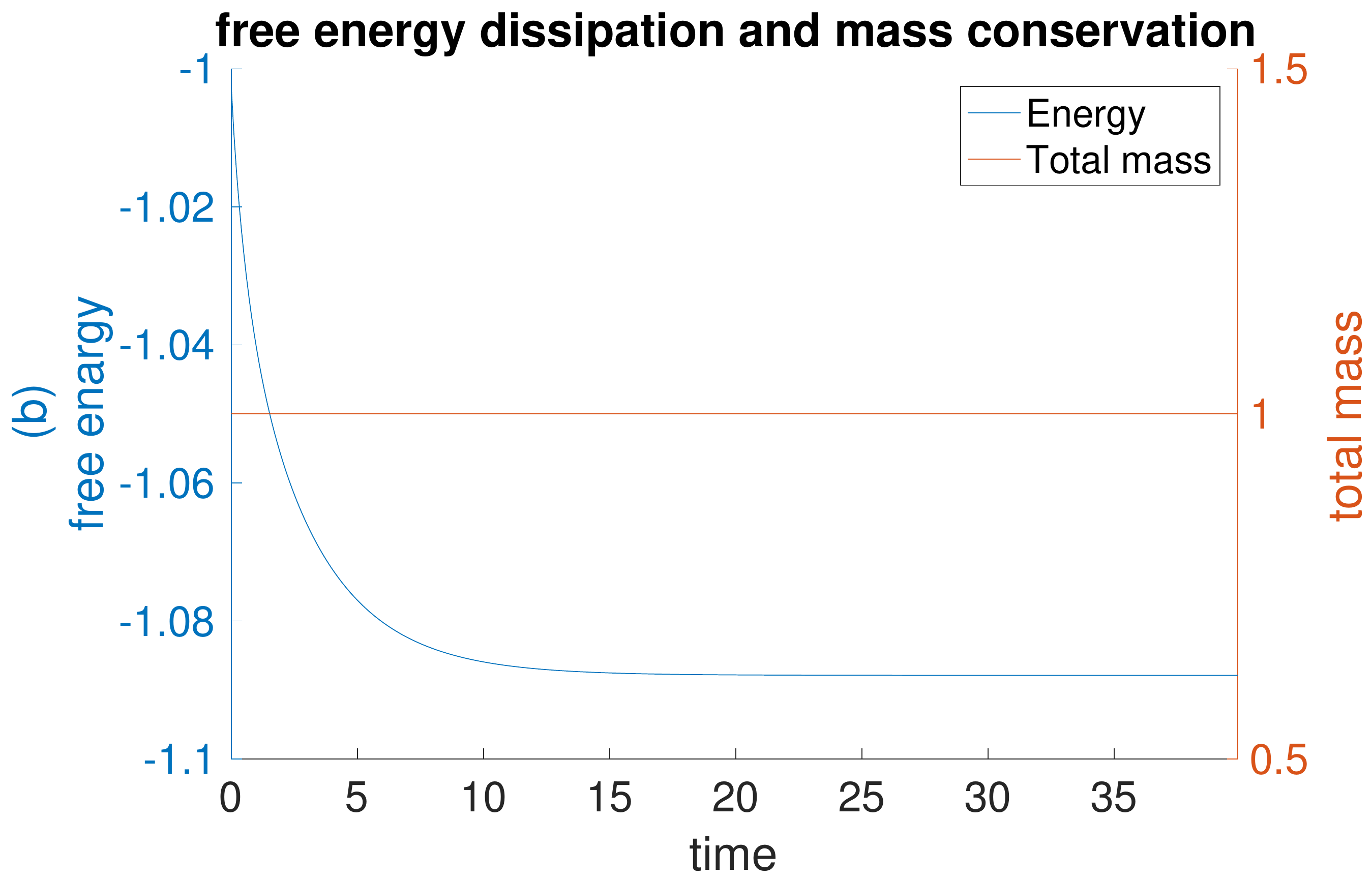}}
\end{figure}

 \begin{figure}[!tbp]
\caption{Solution evolution and energy dissipation for Example (\ref{ex53}) with $\alpha=5.$ }
  \centering
       \subfigure{\includegraphics[width=0.5\linewidth]{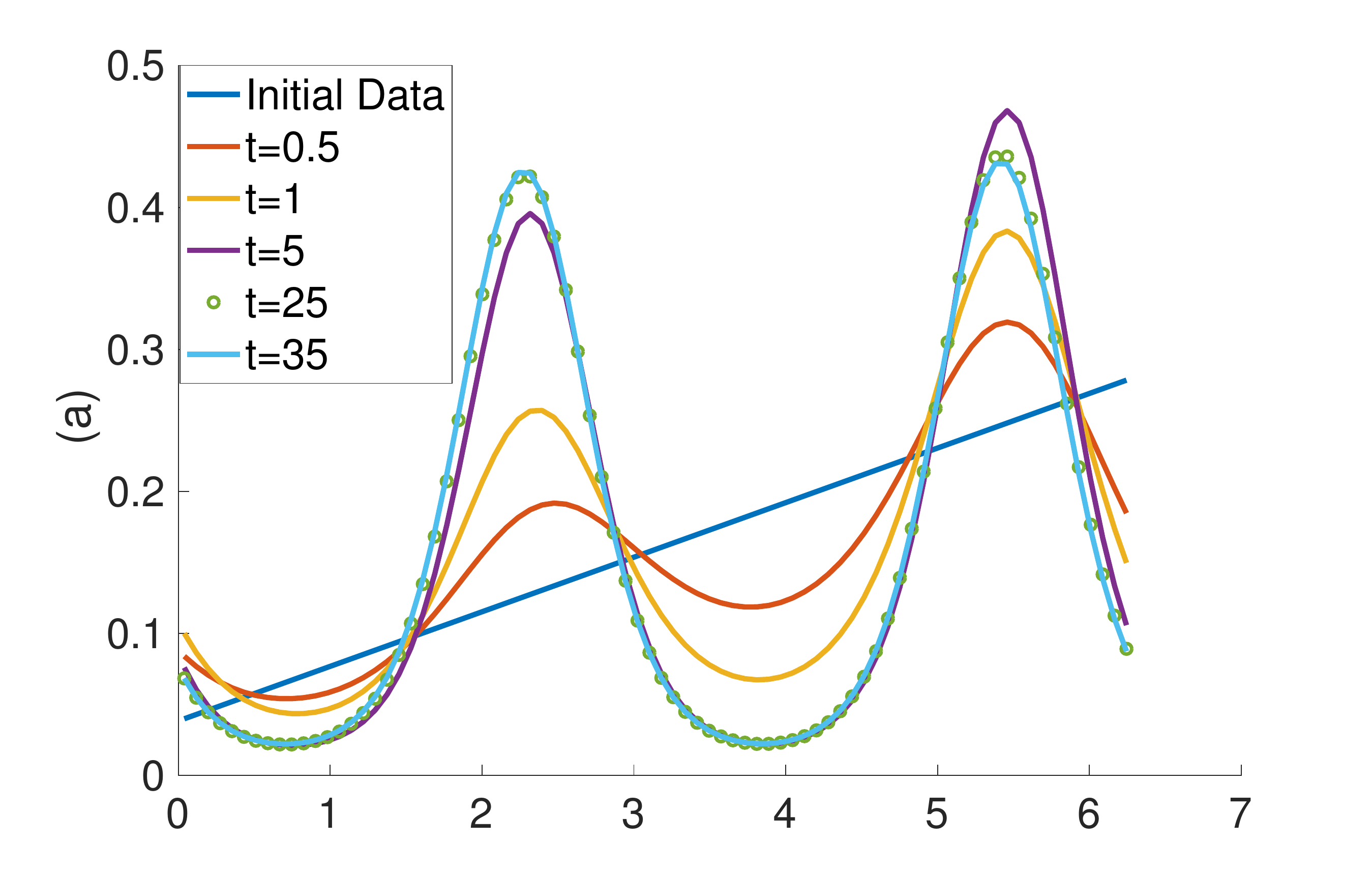}}
 \subfigure{\includegraphics[width=0.48\linewidth]{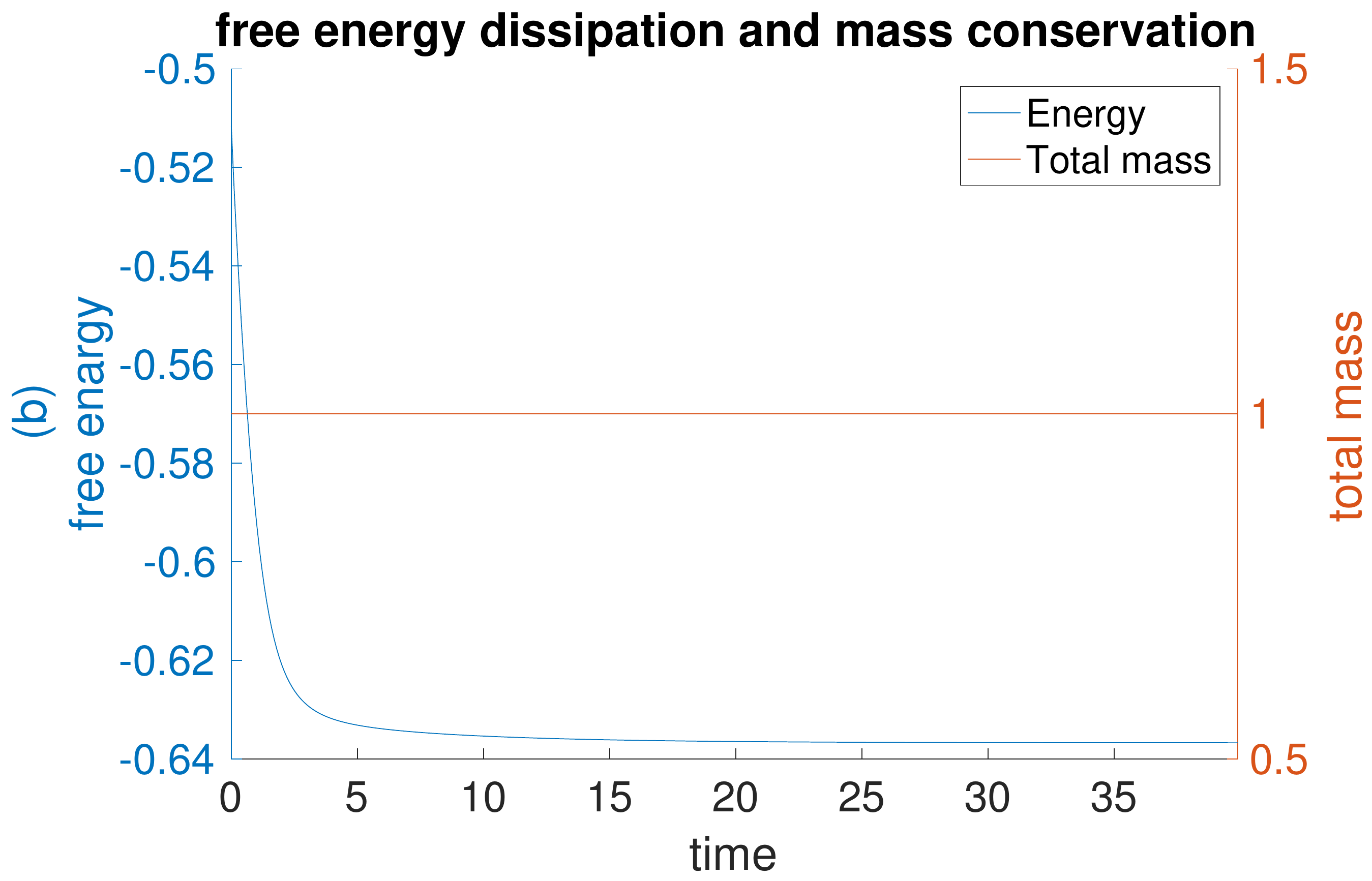}}
\end{figure}

\end{example}

\subsection{Two-dimensional tests}

\begin{example}\label{ex44} (Accuracy test)
We consider the initial value problem with source term,
\begin{equation}
\left \{
\begin{array}{rl}
  \hfill \partial_t \rho =&\nabla\cdot(\nabla\rho+\rho \nabla V(x,y))+F(x,y,t),  \hfill \ \ \   t>0, \ (x,\ y)\in [-\frac{\pi}{2},\ \frac{\pi}{2}]\times [-\frac{\pi}{2},\ \frac{\pi}{2}],\\
  \hfill \rho(x,y,0)=&2+\sin(x)\sin(y)   ,  \hfill \ \ \ \ (x,\ y)\in [-\frac{\pi}{2},\ \frac{\pi}{2}]\times [-\frac{\pi}{2},\ \frac{\pi}{2}],
 \end{array}
\right.
\end{equation}
subject to zero flux boundary conditions, here $V(x,y)=\sin(x)\sin(y),$ and
$$
F(x,y,t)=e^{-t}(2\sin^2(x)\sin^2(y)+5\sin(x)\sin(y)-\cos^2(x)\sin^2(y)-\sin^2(x)\cos^2(y)-2).
$$ 
This problem has the exact solution 
$$\rho(x,t)=e^{-t}(2+\sin(x)\sin(y)).
$$ 
We choose $\tau=0.1h^2$ in scheme (\ref{fully2}) and  $\tau=0.1h$ in scheme (\ref{PC2}). Errors and orders at $t=1$ are listed in Table 3, in this test uniform meshes with $h=h^x=h^y=\pi/N$ have been used.
 
 \begin{table}[ht]
        \centering
                \caption{Accuracy of scheme (\ref{fully2}) and (\ref{PC2}).}
\begin{tabular}{|l|l|l|l|l|l|l|l|l|}
\hline
 & \multicolumn{4}{l|}{scheme (\ref{fully2}) with $\tau=0.1h^2$} & \multicolumn{4}{l|}{scheme (\ref{PC2}) with $\tau=0.1h$} \\ \hline
 $N \times N$    &     $l^1$ error & order& $l^{\infty}$ error &order    &    $l^1$ error & order& $l^{\infty}$ error &order     \\ \hline
 $10 \times 10 $ &     0.927816E-1&-&0.175767E-1&-                      &     0.31090E-01&-&0.84728E-02&-    \\ 
$20 \times 20$&0.232384E-1&1.997&0.446660E-2&1.976           &    0.77577E-02&2.003&0.22012E-02&1.945    \\ 
 $40 \times 40$ &0.581196E-2&1.999&0.112137E-2&1.994        &    0.19368E-02&2.002&0.55550E-03&1.986   \\ 
 $80 \times 80$ &0.145297E-2&2.000&0.280607E-3&1.999         &     0.48558E-03&1.996&0.13975E-03&1.991   \\ \hline
\end{tabular}
\end{table}

\end{example}

Finally we mention that there is a class of equations in which the interaction is modeled through a potential governed by the Poisson equation. The celebrated model is the Patlak-Keller-Segel system of the chemotaxis \cite{Horst1,Horst2}. The original model is a coupled parabolic system, and 
the one related to our model equation (\ref{mainmodel}) is the parabolic-elliptic version of the form (see e.g., \cite{PKS3}) 
\begin{equation}\label{PKS11}
\left \{
\begin{array}{rl}
\hfill \partial_t \rho =&\Delta \rho-\nabla\cdot(\chi\rho \nabla c),  \hfill \ \ \   t>0, \ \mathbf{x}\in  \R^2,\\
\hfill -\Delta c=& \rho,  \hfill \ \ \  \\
\hfill \rho(\mathbf{x},0)=&\rho_0(\mathbf{x})   ,  \hfill \ \ \ \ \mathbf{x}\in  \R^2.
\end{array}
\right.
\end{equation}
Here, $\rho(\mathbf{x},t)$ is the cell density, $c(\mathbf{x},t)$ is the chemical attractant concentration, the parameter $\chi>0$ is the sensitivity of bacteria to the chemical attractant.  It has been shown in \cite{PKS} that the solution behavior of problem (\ref{PKS11}) is quite different when crossing a critical mass.  
If the initial mass $M=\int_{\mathbb{R}^2} \rho_0(x, y)dxdy$  is smaller than a critical value $M_c =8\pi/\chi$, then the solution exists globally. When $M > M_c$, the solution will blow up in finite time, which is referred to as chemotactic collapse.

\begin{example}\label{ex45} {(Patlak$-$Keller$-$Segal system)}. In this example, we test the method's capacity in capturing solution concentrations for the Patlak$-$Keller$-$Segal system (\ref{PKS11}).   Using the Green function for the Poisson equation, this system can be reformulated as (\ref{mainmodel}) with $V=0$ and 
\begin{equation}\label{Green}
W(x,y)=\frac{\chi}{2\pi}\log(\sqrt{x^2+y^2}).
\end{equation}
  In our simulation, we restrict to a bounded domain $\Omega$ subject to zero flux boundary conditions, using formulation (\ref{mainmodel21}) with $ V(x,y)=0$ and $W$ defined in (\ref{Green}). We fix $\chi=1$ and consider both the sub-critical case with 
\begin{align*}
\rho_0(x,y)=\left \{
\begin{array}{rl}
 &2(\pi-0.2),  \hfill \ \ \   \ (x,y) \in [-1,1]\times[-1,1], \\\\
& 0,  \hfill \ \ \   \  (x,y) \in \Omega \backslash [-1,1]\times[-1,1],
\end{array}
\right.
\end{align*}
on $\Omega =[-5,5]\times[-5,5]$,  and super-critical case with 
\begin{align*}
\rho_0(x,y)=\left \{
\begin{array}{rl}
 &2(\pi+0.2),  \hfill \ \ \   \ (x,y) \in [-1,1]\times[-1,1], \\\\
& 0,  \hfill \ \ \   \  (x,y) \in \Omega \backslash [-1,1]\times[-1,1],
\end{array}
\right.
\end{align*}
on $\Omega =[-1.5,1.5]\times[-1.5,1.5]$, for which we know that the solution blows-up at finite time.

We take time step $\tau= 0.01$, and set $N_x=N_y=51$ so that a single cell is located at the center of the computational domain, where one can view  a clear picture of the blow-up phenomena in super-critical case. In Fig.5 are snap shots of numerical solutions in the sub-critical case at $t=0,\ 2, \ 8, \ 12, \ 16$, from which we observe that the numerical solution dissipates in time, the last picture in Fig.5 shows mass conservation and energy dissipation. In Fig.6 are snap shots of numerical solutions in super-critical case at $t=0,\ 0.5,\ 1, \ 1.5, \ 2$, we observe that numerical solutions tend to concentrate at the origin.   

Let us remark that in \cite{DG} the same concentration phenomena was observed, using a DG method for this problem with periodic boundary conditions. Different boundary conditions do not affect the concentration profile since the solution is compactly supported in our setting.  In the super-critical case, 
the peak in our result is slightly lower than that captured in \cite{DG}, this is expected because the solution is concentrated at a single point,  and cell averaging near the origin can decrease the height of the peak.

\begin{figure}[!tbp]
\caption{Solution evolution  for Example \ref{ex45} (sub-critical).}
\centering  
         \subfigure{\includegraphics[width=0.45\linewidth]{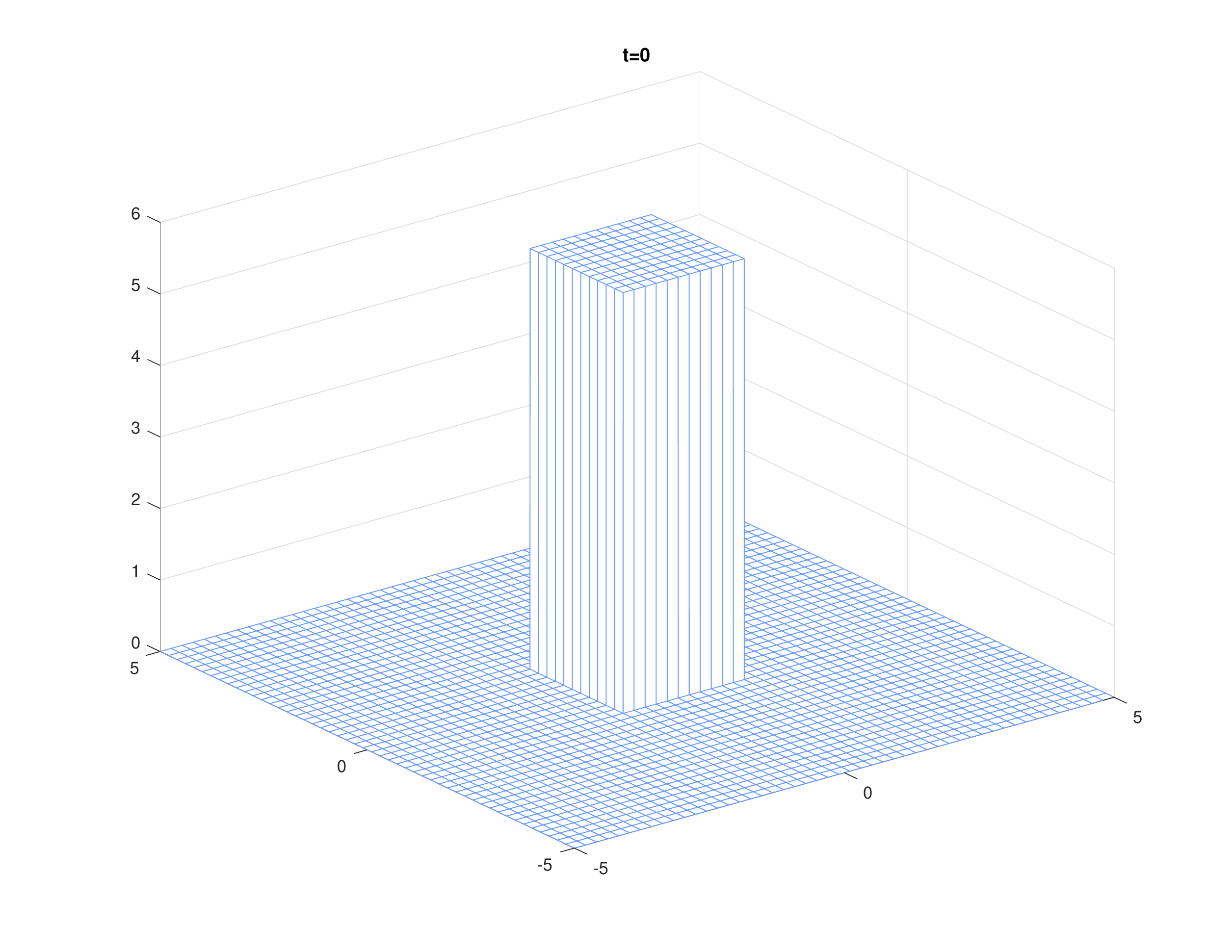}}
      \subfigure{\includegraphics[width=0.45\linewidth]{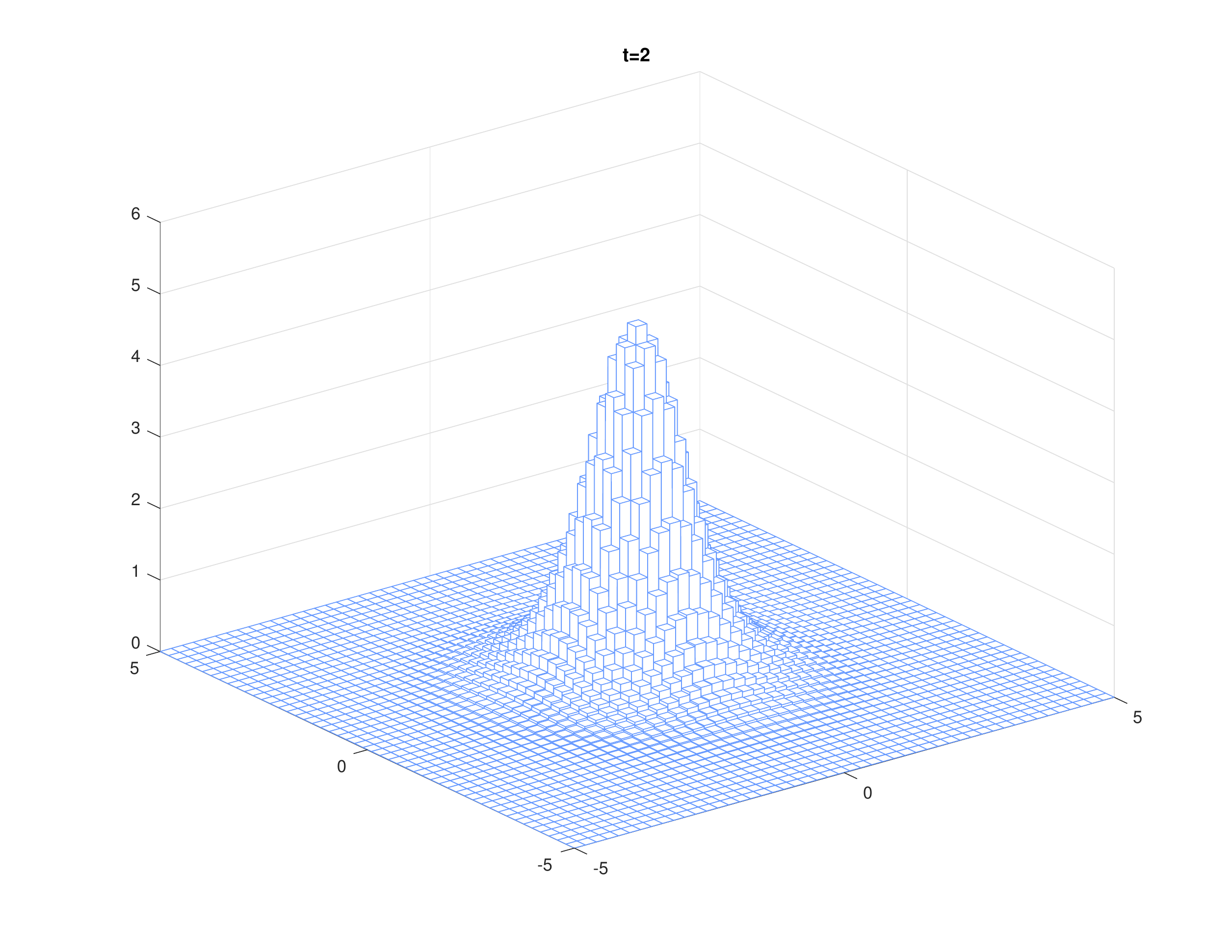}}\\
      \subfigure{\includegraphics[width=0.45\linewidth]{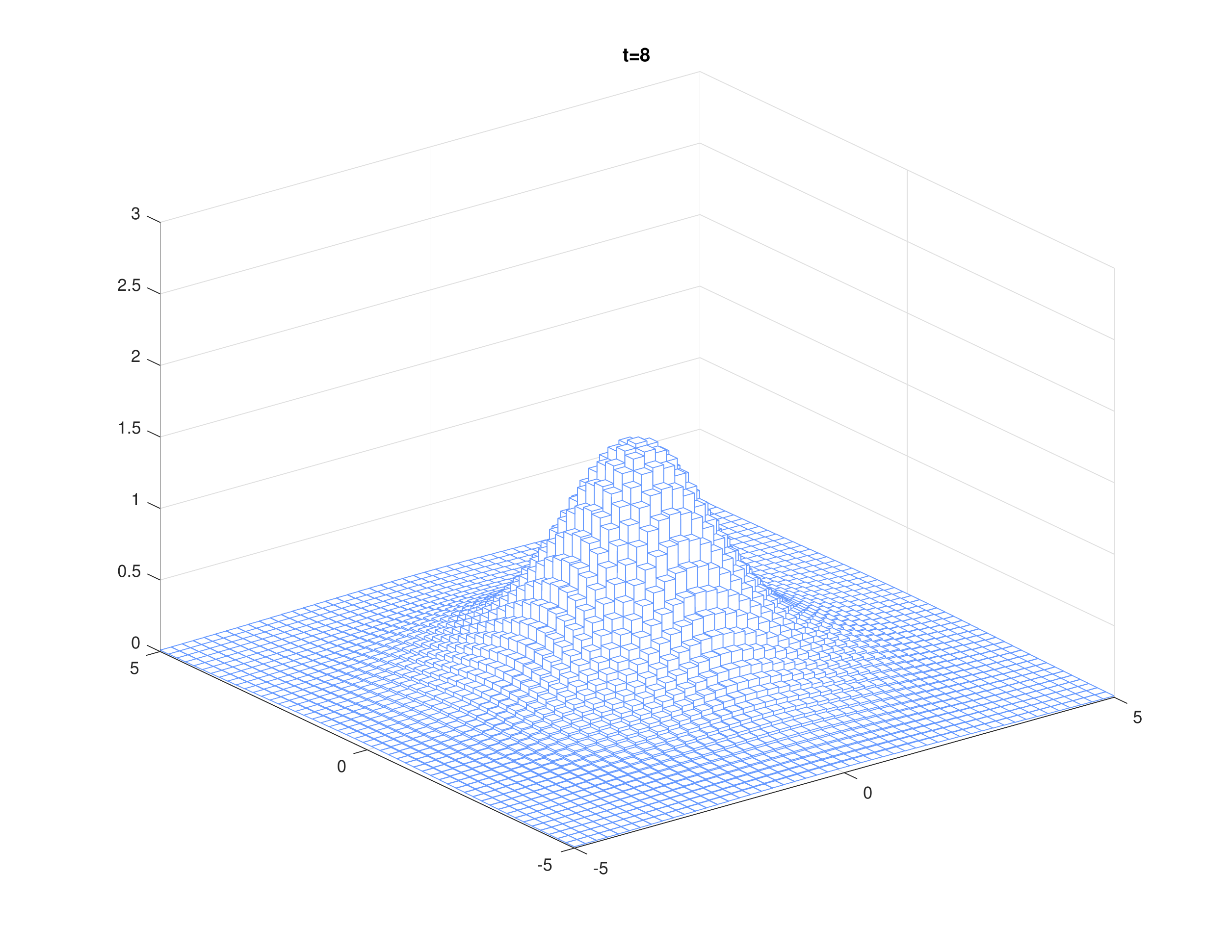}}
               \subfigure{\includegraphics[width=0.45\linewidth]{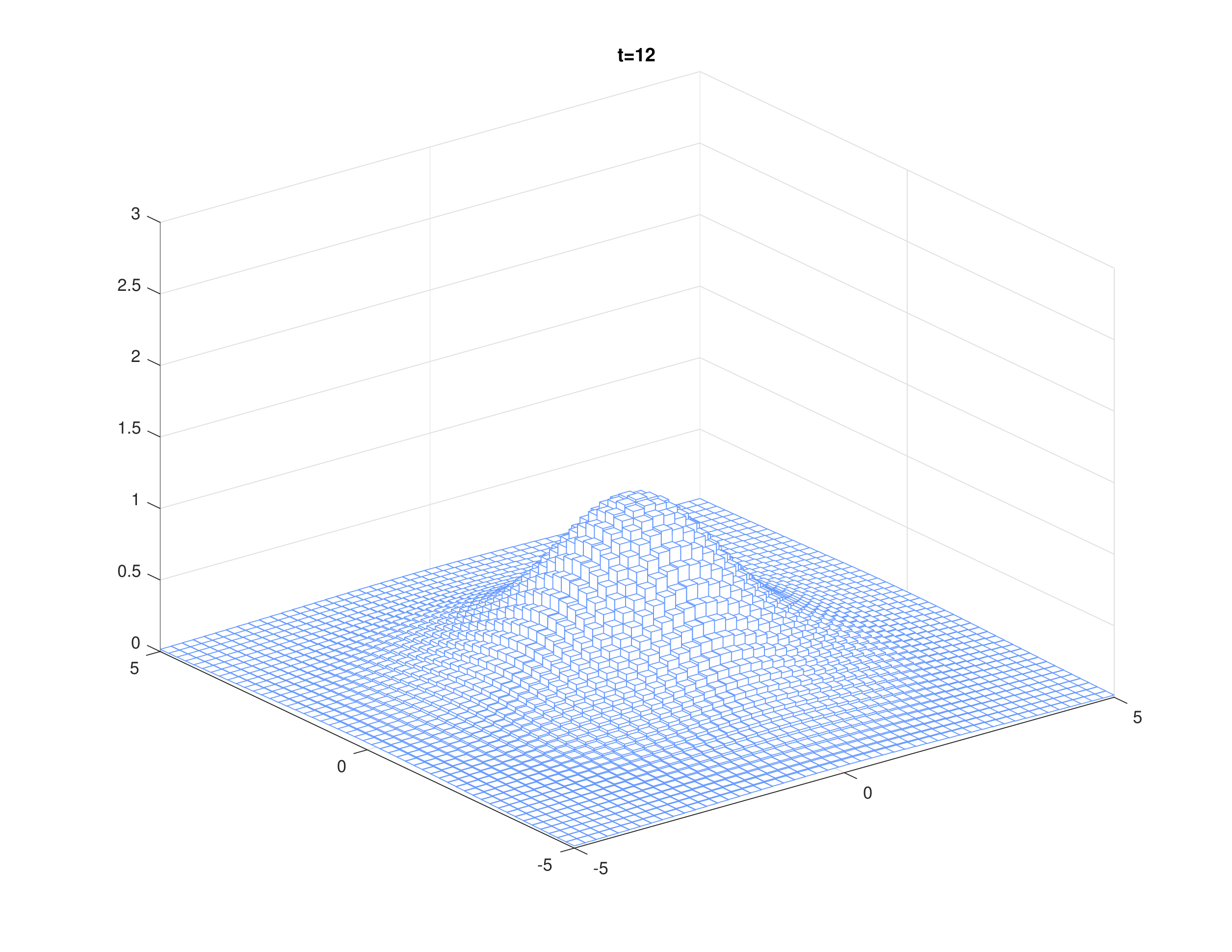}}\\
      \subfigure{\includegraphics[width=0.45\linewidth]{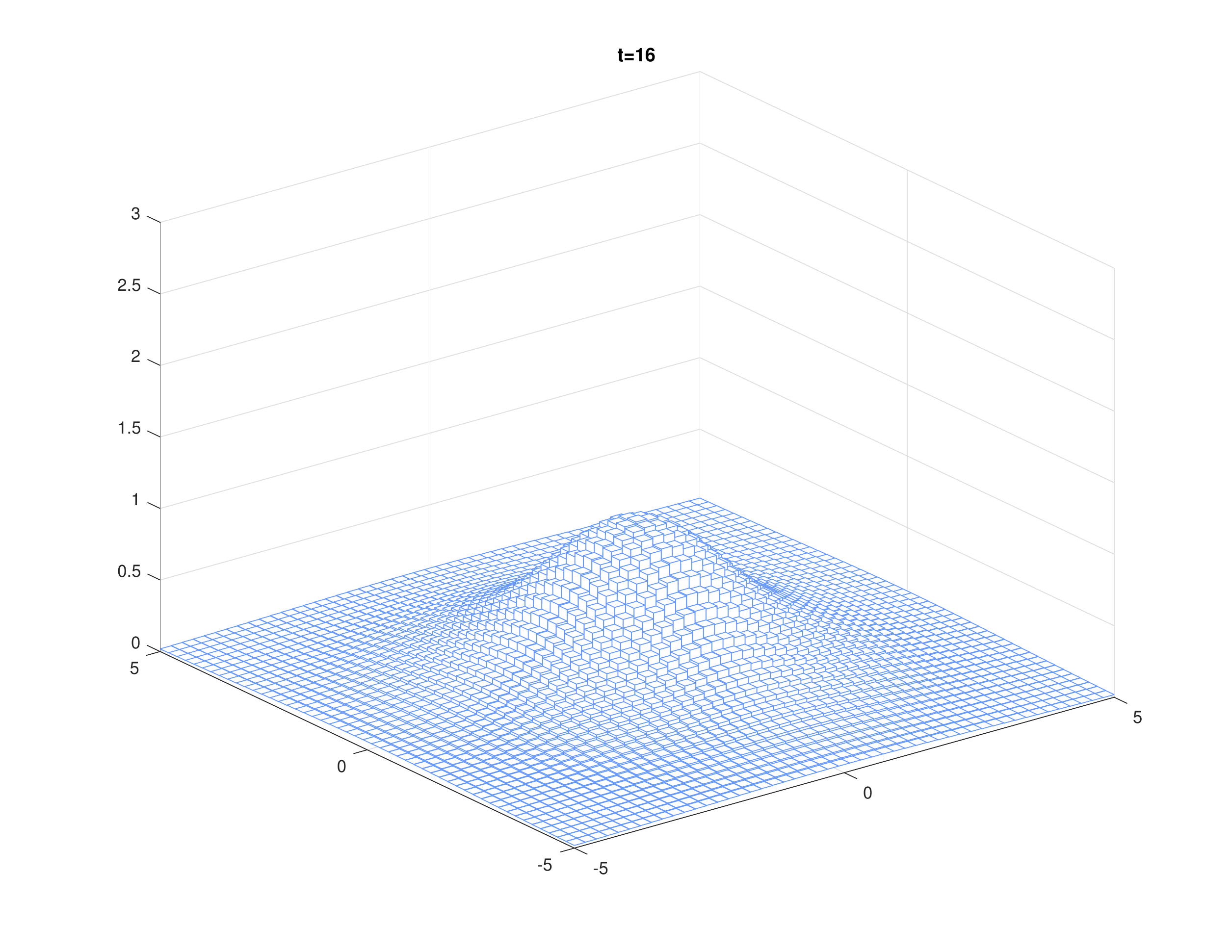}}
      \subfigure{\includegraphics[width=0.4\linewidth]{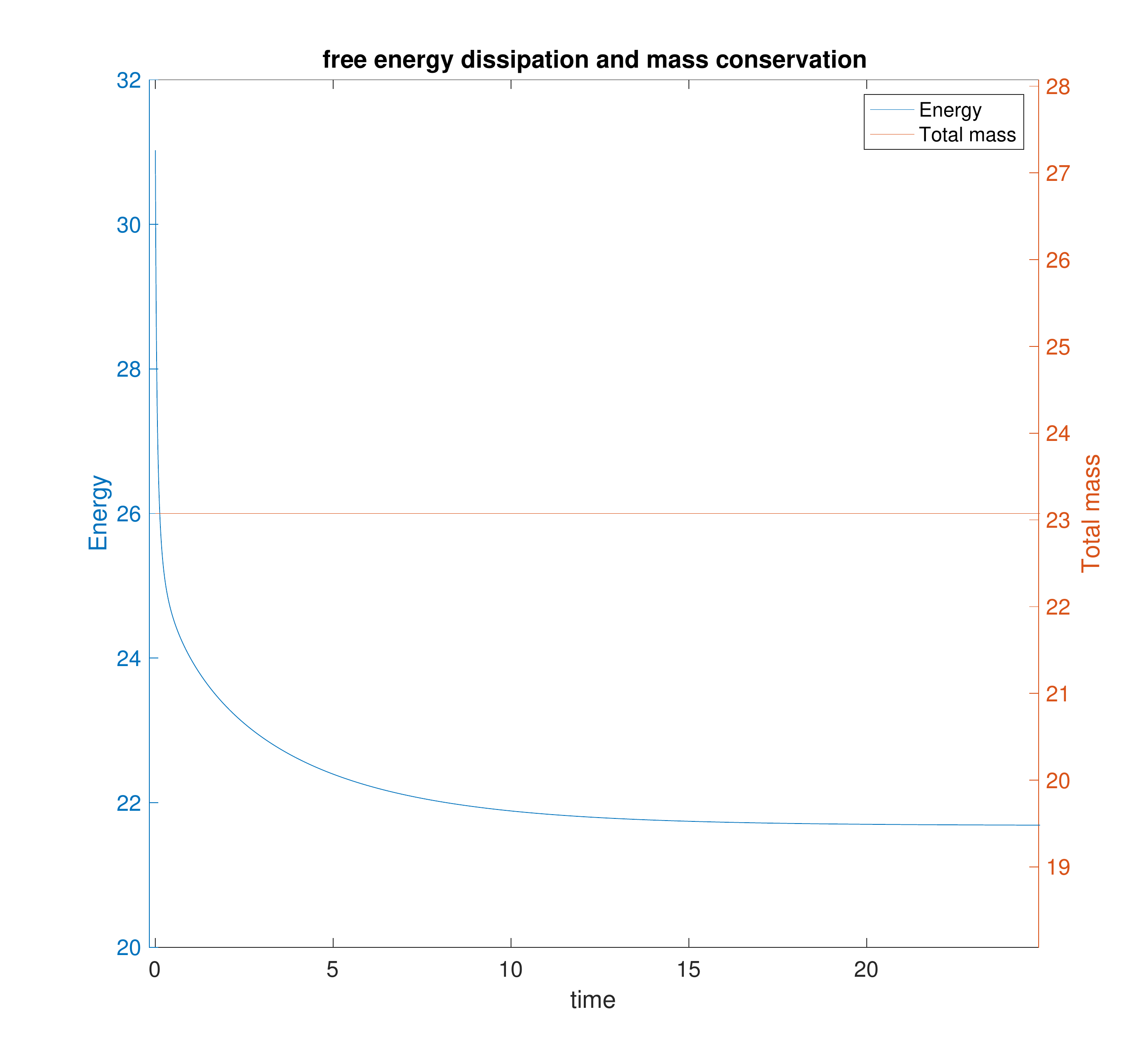}}
\end{figure}

\begin{figure}[!tbp]
\caption{Solution evolution  for Example \ref{ex45} (super-critical).}
\centering  
         \subfigure{\includegraphics[width=0.45\linewidth]{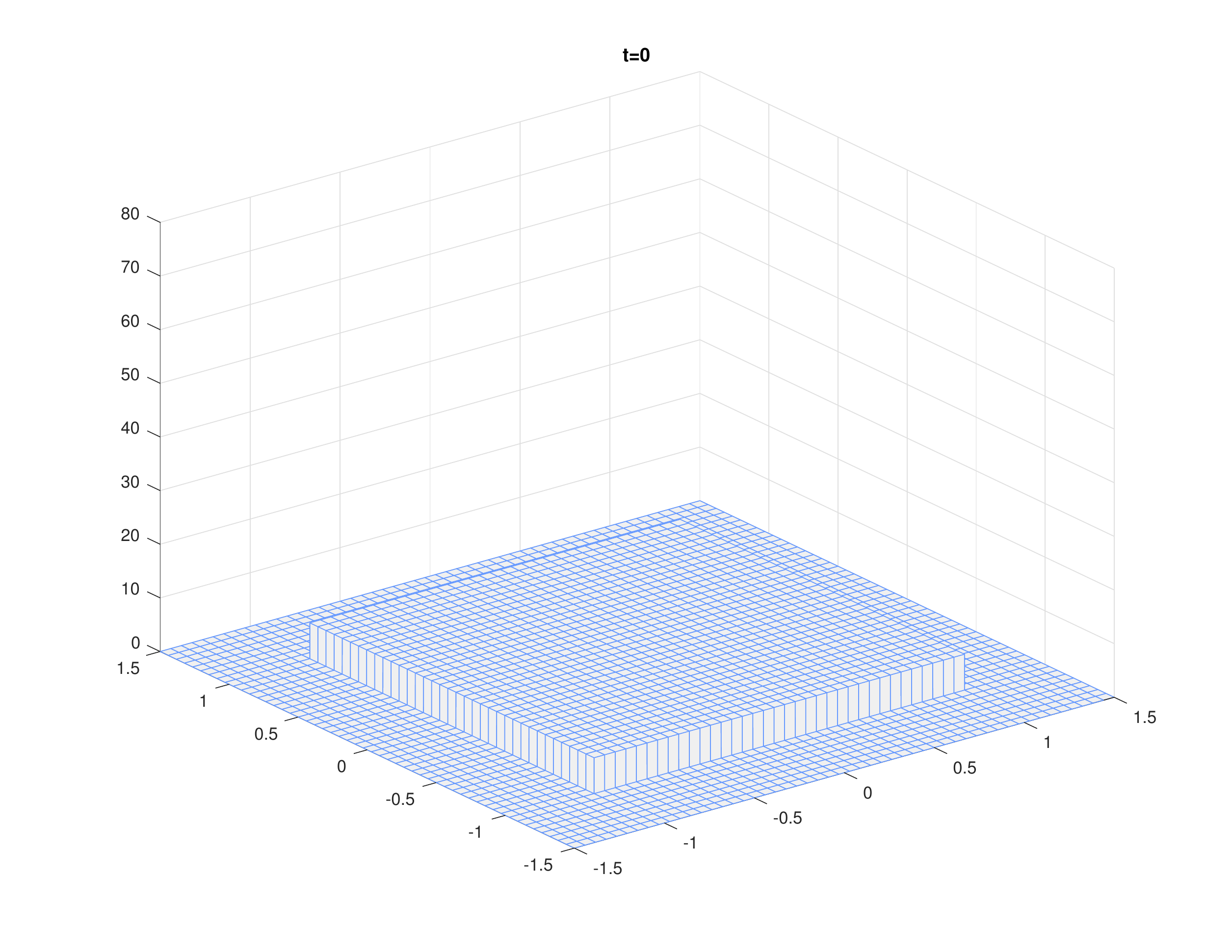}}
      \subfigure{\includegraphics[width=0.45\linewidth]{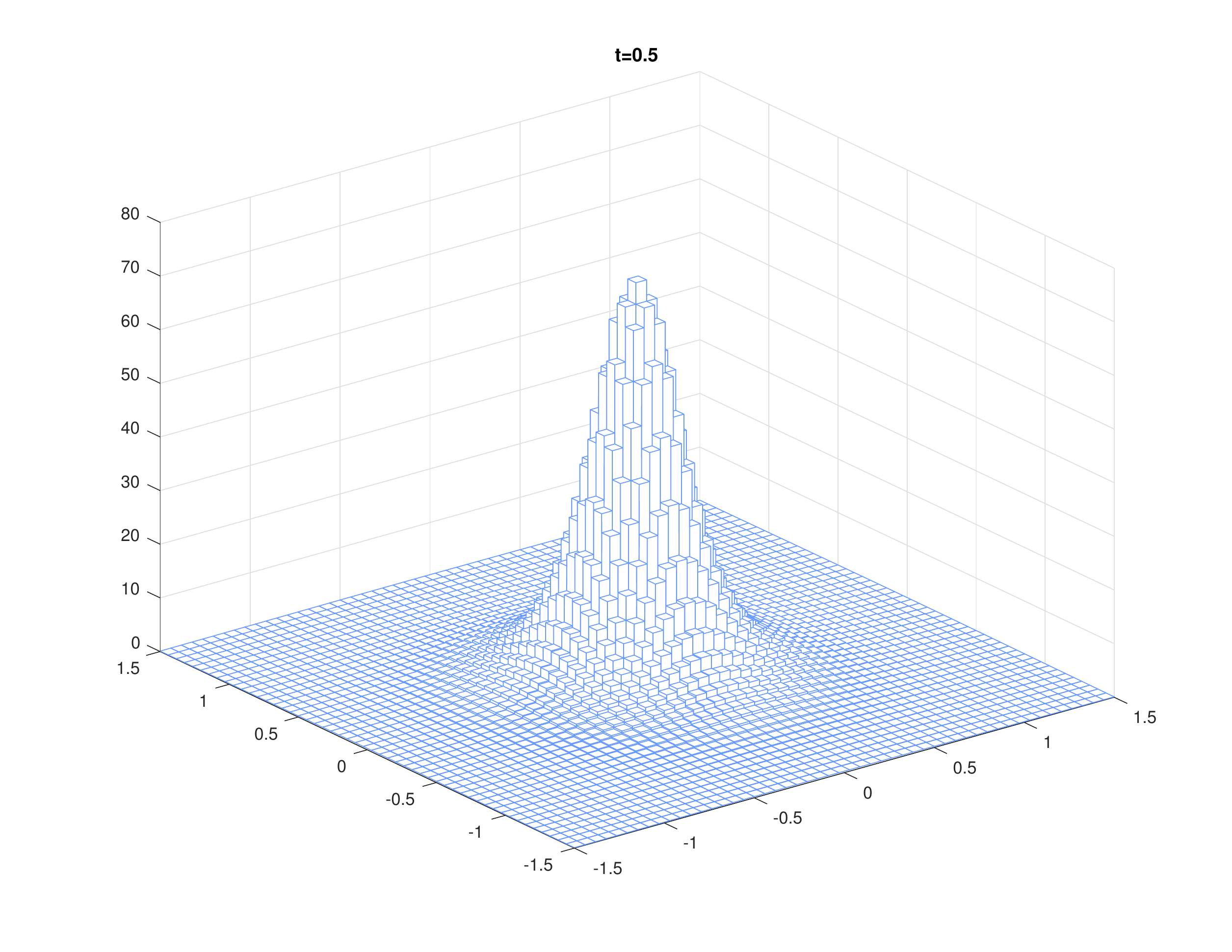}}\\
      \subfigure{\includegraphics[width=0.45\linewidth]{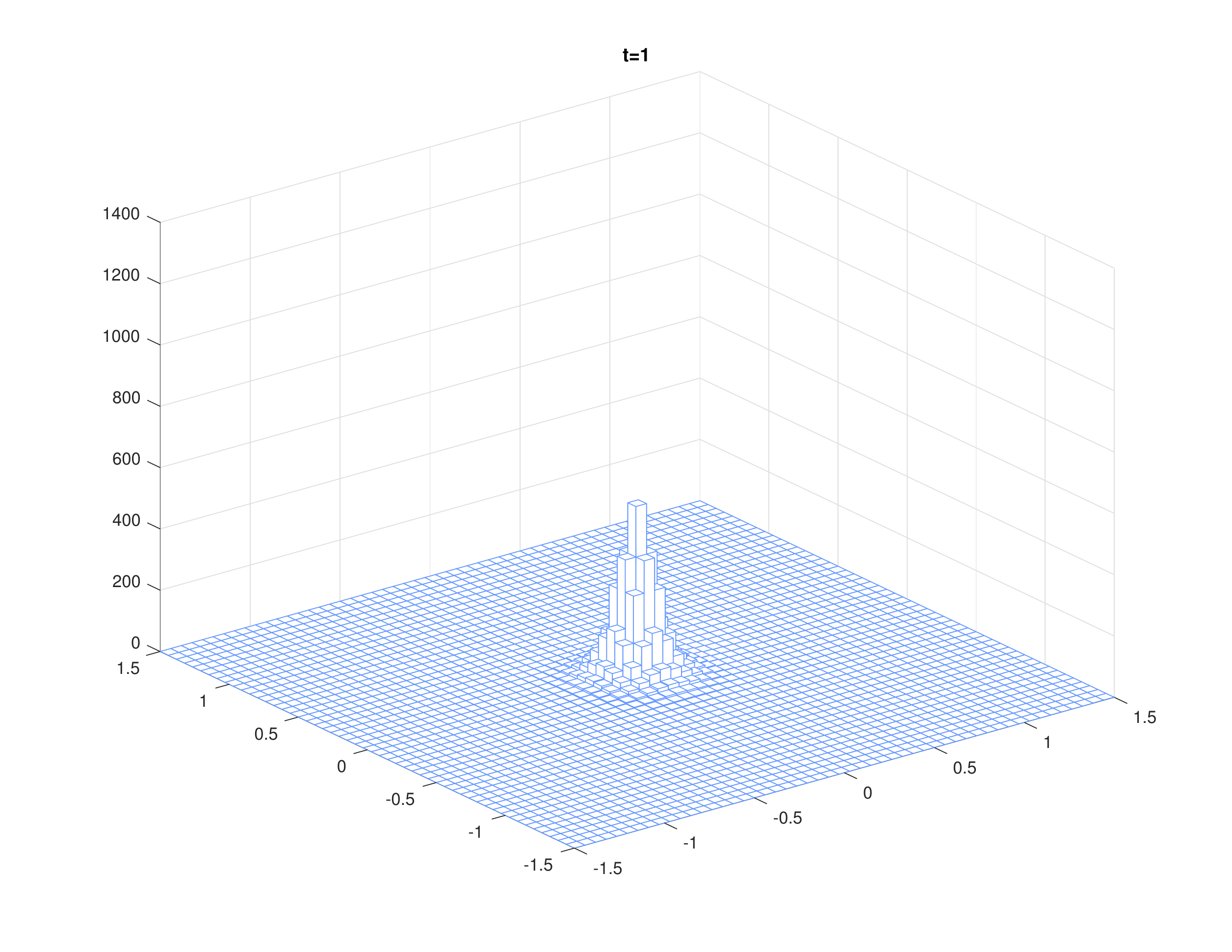}}
               \subfigure{\includegraphics[width=0.45\linewidth]{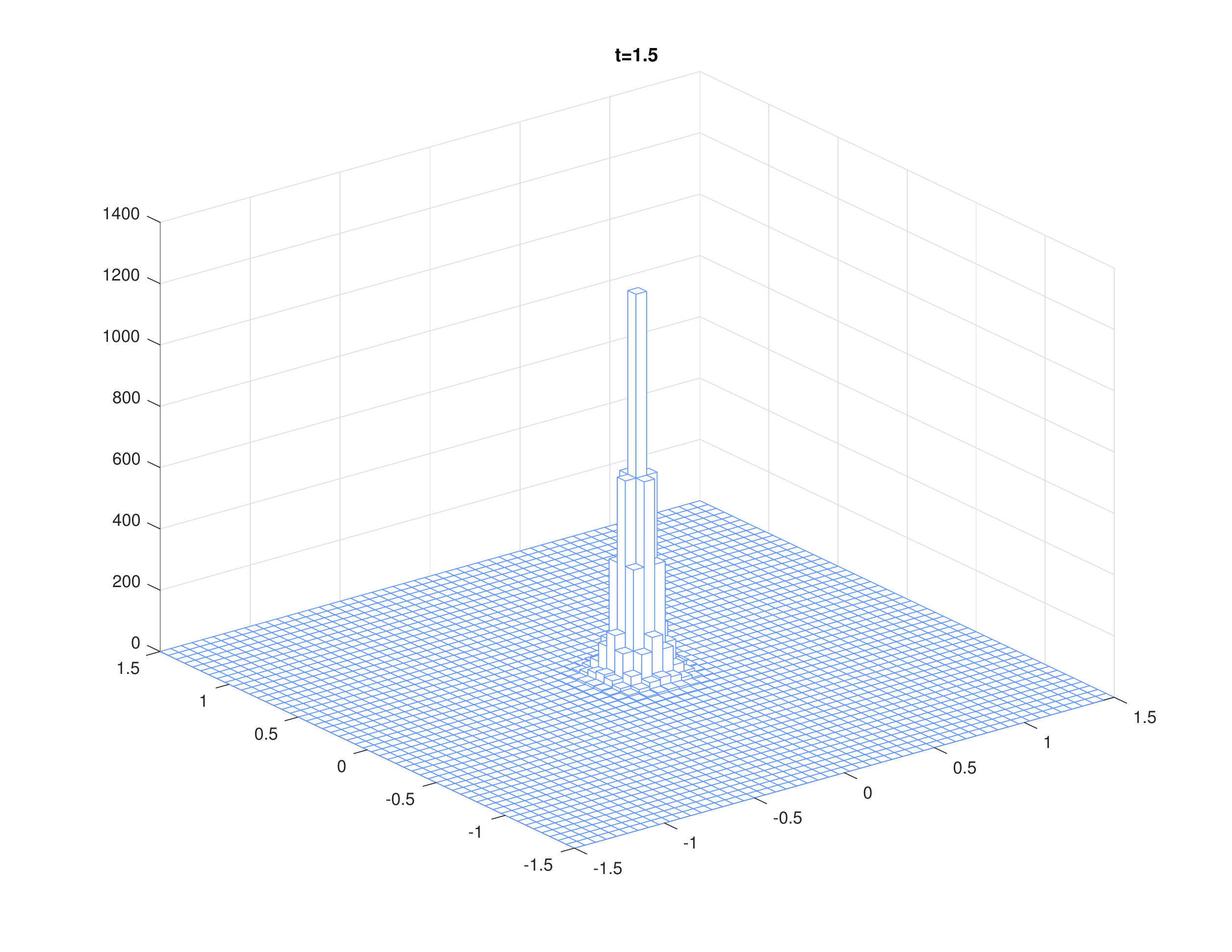}}\\
      \subfigure{\includegraphics[width=0.45\linewidth]{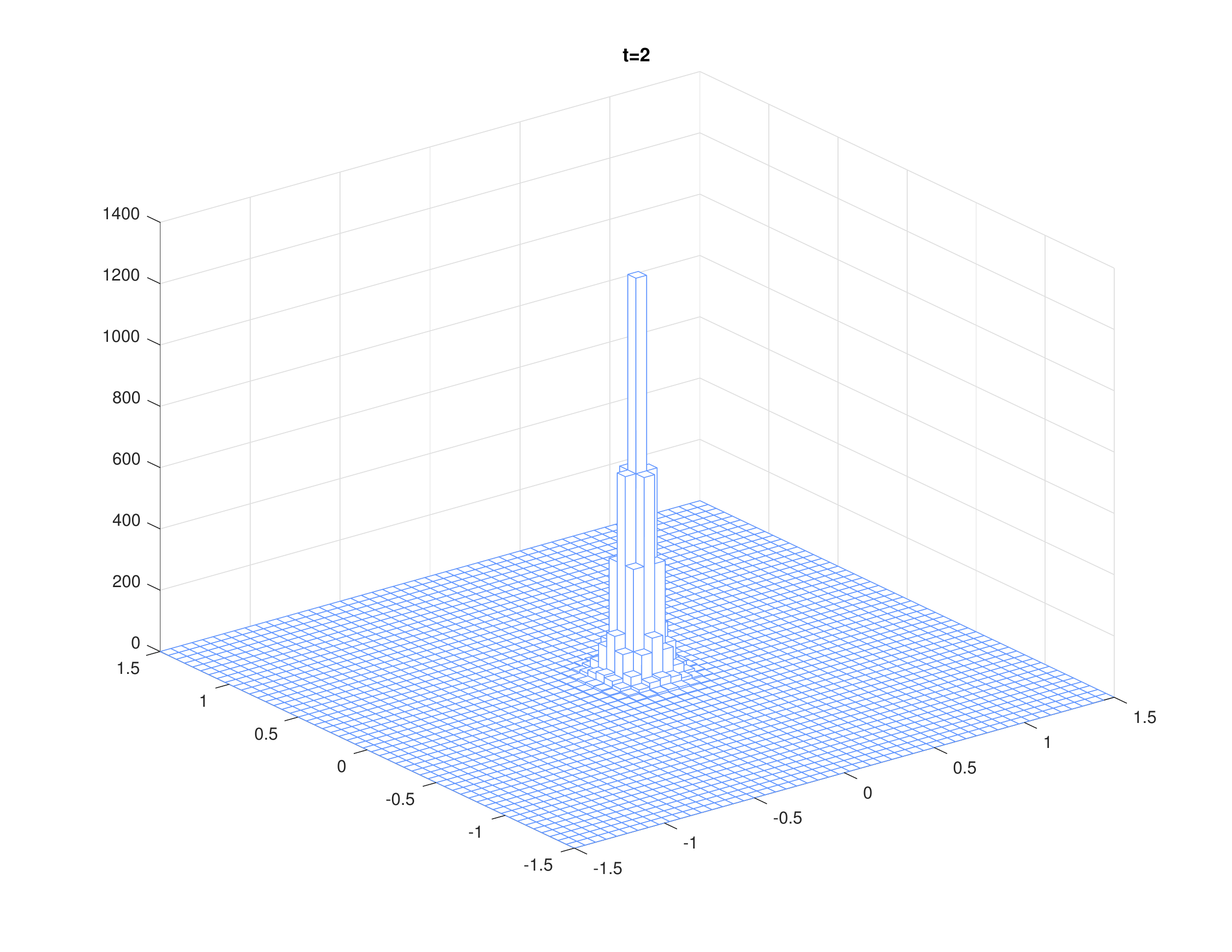}}
      \subfigure{\includegraphics[width=0.4\linewidth]{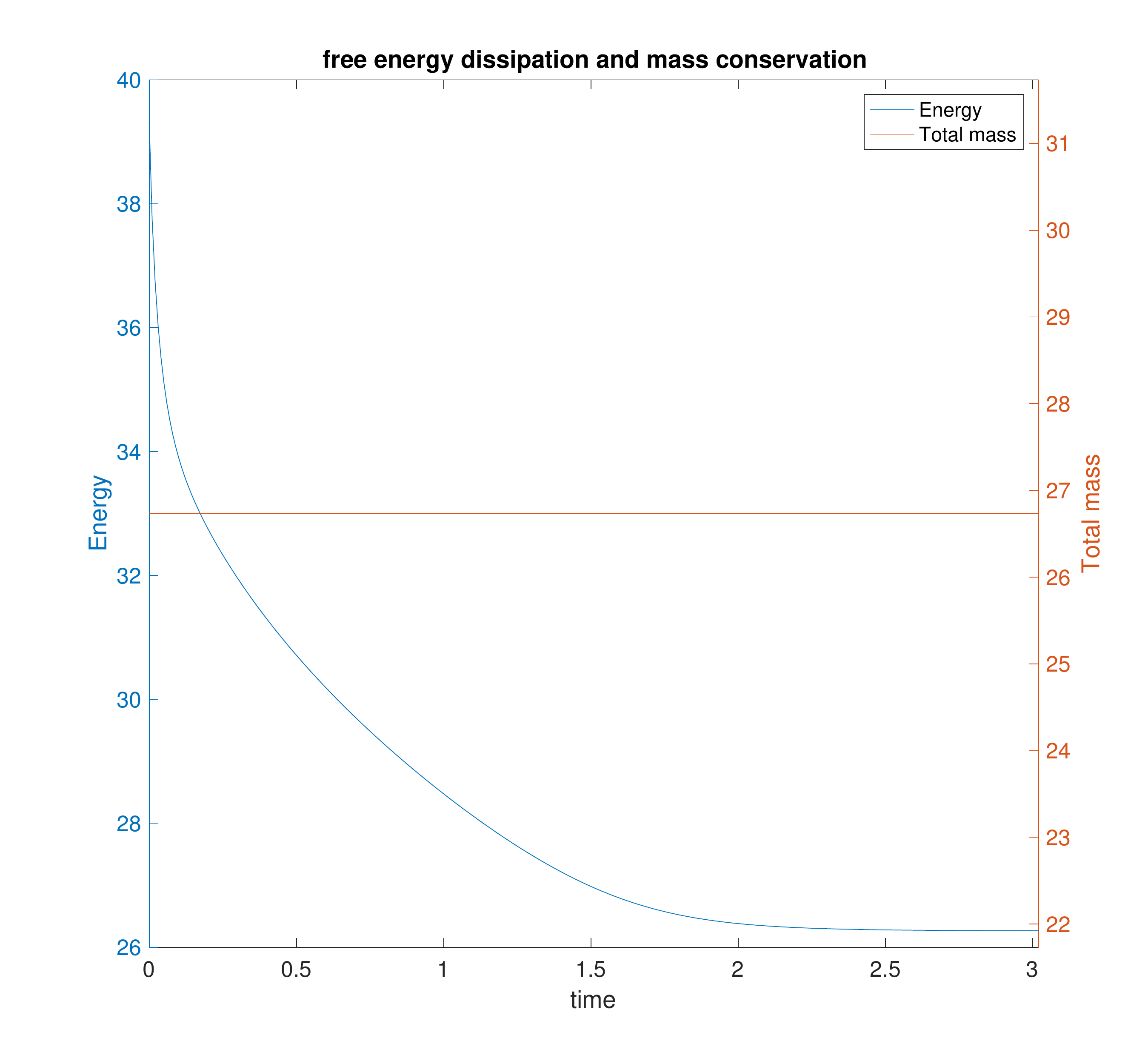}}
\end{figure}

\end{example}

\section{Concluding remarks}
In this paper, we have developed positive and free energy satisfying schemes for diffusion equations with interaction potentials; since such equations are governed by a free energy dissipation law and are featured with non-negative solutions.  Based on the non-logarithmic Landau reformulation of the model, we constructed a simple, easy-to-implement fully discrete numerical scheme (first order in time) which proved to satisfy all three desired properties of the continuous model: mass conservation, free energy dissipation and non-negativity, without a strict time step restriction.  For a fully second order  (in both time and space) scheme ,we used a local scaling limiter to restore solution positivity when necessary.  Moreover, we rigorously proved that the limiter does not destroy the second order accuracy.  Numerical examples have demonstrated the superior performance of these schemes, in particular, the three solution properties numerically confirmed are consistent with our theoretical findings.

\section*{Acknowledgments}
 This research was supported by the National Science Foundation under Grant DMS1312636.
\bigskip
\bibliographystyle{abbrv}

\end{document}